\documentclass[11pt,reqno]{amsart}
\usepackage{pbruillard_commands}
\usepackage{gustafson_commands}

\usepackage[all,knot]{xy}

\newcommand{\pjbcomment}[1]{\color{red}#1 -pjb\color{black}}
\newcommand{\paren}[1]{\left(#1\right)}
\renewcommand{\diag}{\mathrm{diag}}
\newcommand{\intcat}{\mathrm{int}}
\newcommand{\one}{\mathbf{1}}
\newcommand{\ot}{\otimes}
\newcommand{\B}{\mcB}

%
%
%


\begin{document}
\title[Dimension and metaplectic categories]{Dimension as a quantum statistic and the classification of metaplectic categories}
\date{\today}
\author{Paul Bruillard}
\email{Paul.Bruillard@pnnl.gov}
\author{Paul Gustafson}
\email{pgustafs@math.tamu.edu}
\author{Julia Yael Plavnik}
\email{julia@math.tamu.edu}
\author{Eric C. Rowell}
\email{rowell@math.tamu.edu}

\thanks{\textit{PNNL Information Release:} PNNL-SA-120943}
\thanks{
The research in this paper was, in part, conducted under the Laboratory Directed
Research and Development Program at PNNL, a multi-program national laboratory
operated by Battelle for the U.S. Department of Energy.}

\subjclass[2010]{Primary 18D10; Secondary 16T06}

\commby{}

\begin{abstract} We discuss several useful interpretations of the categorical dimension of objects in a braided fusion category, as well as some conjectures demonstrating the value of quantum dimension as a quantum statistic for detecting certain behaviors of anyons in topological phases of matter.  From this discussion we find that objects in braided fusion categories with integral \textit{squared} dimension have distinctive properties.  A large and interesting class of non-integral modular categories such that every simple object has integral squared-dimensions are the metaplectic categories that have the same fusion rules as $SO(N)_2$ for some $N$. We describe and complete their classification and enumeration, by recognizing them as $\ZZ_2$-gaugings of cyclic modular categories (i.e. metric groups).  We prove that any modular category of dimension $2^km$ with $m$ square-free and $k\leq 4$, satisfying some additional assumptions, is a metaplectic category.  This illustrates anew that dimension can, in some circumstances, determine a surprising amount of the category's structure.  
 \end{abstract}

\maketitle

\section{Introduction}
  \label{Section: Introduction}
  
  The goal of this article is twofold: 1) to survey various notions of dimension for fusion categories and some properties determined (at least conjecturally) by dimension and 2) to classify of metaplectic categories of dimension $16k$ and apply it to the classification of categories with global dimension $16m$ with $m$ square-free.

  Dimensions of simple objects in fusion categories are one of the most ubiquitous invariants we encounter. Algebraically, a  dimension function on a fusion category $\mcC$ is a generalization of a linear character for a finite group: it is an assignment of a complex number to each object $X\in\mcC$ that obeys the fusion rules.  As the character table of a finite group $G$ contains a significant amount of information about $G$ itself (e.g. whether $G$ is abelian, simple, or perfect), it is natural to ask: What information is contained in the dimensions of simple objects in a category?  That is, how much of the structure, and which properties are determined by dimensions? The two dimension functions that one is most often interested in are the categorical and Frobenius-Perron (FP) dimensions. Fortunately, in the physically-relevant unitary setting the FP and categorical dimension coincide. For our purposes it is the FP dimension that is most useful, so we will focus on this particular function.  In what follows we will simply refer to the FP dimension as the dimension.
  
  Non-degenerate ribbon fusion (i.e. modular) categories model certain topological phases of matter \cite{Nayaketal}, where the simple objects label anyons, and the isomorphism classes correspond to the anyon types. Here the word anyon is meant to convey that they are generalizations of bosons and fermions, which are (quasi)-particles with Bose/Fermi exchange statistics. In this interpretation the quantum dimensions of simple objects correspond asymptotically to the dimensions of state spaces of $n$ identical anyons in a disk with boundary labeled by the trivial object (vacuum anyon).  One may also ask: What properties of an anyon are determined by their (label's) dimension?  As we view anyon systems through the lens of category theory, this leads to mathematical questions and conjectures.  There are three important and seemingly unrelated properties of an anyon that are (at least conjecturally) controlled by the dimension: abelianness, localizability and universality.  This motivates our perspective that dimension is the central quantum statistic for anyonic systems.  We have the following (see Section \ref{Section: Properties}):
  
  \begin{conjnn} An anyon $x$ is braiding universal if and only if $\dim(x)^2\in\mbbZ$, if and only if $x$ is localizable.
  \end{conjnn}

 A coarser invariant of a fusion category is the global (FP) dimension: the sum of the squares of the dimension of the simple objects.  Still, some properties of a modular category are determined by the global dimension.  It is well-known (see \cite[Cor. 3.14]{BNRW1}) that there are finitely many fusion categories with a given fixed global dimension.  In some cases one can go a bit further to give a complete classification of modular categories of a given global dimension in terms of other well-studied categories.  We illustrate this principle by classifying certain modular categories of dimension $16m$ with $m$ square-free in terms of metaplectic modular categories.

A critical ingredient of this classification is the classification of metaplectic modular categories, which we finish in this paper, section \ref{Subsection: Metaplectic}.
Combining with results of \cite{ACRW1,BPR} we have the following
\begin{thm} Let $\mcC$ be a metaplectic modular category of dimension $4N$.  Let $N=2^ap_1^{a_1}\cdots p_s^{a_s}$ be the prime factorization (where $a\geq 0$ and $a_i>0$). Then 

\begin{enumerate}
\item[(a)] $\mcC$ is obtained from a pointed cyclic modular category $\mcC(\mbbZ_N,q)$ by a $\mbbZ_2$-gauging.
\item[(b)] If $a\leq 1$ then there are exactly $2^{s+1+a}$ inequivalent metaplectic modular categories of dimension $4N$, and if $a>1$ there are $3(2^{s+2})$. \end{enumerate}
\end{thm}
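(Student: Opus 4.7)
The plan is to prove part (a) by combining the partial classification already in the literature with new input for the remaining case, then derive the enumeration in (b) by systematically counting gauging data.

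For (a), the cases $a = 0$ (odd $N$) and $a = 1$ are established in \cite{ACRW1} and \cite{BPR} respectively, so the new work lies in $a \geq 2$, treated in section \ref{Subsection: Metaplectic}. The key structural step is to produce, inside any metaplectic modular category $\mcC$, a canonical Tannakian subcategory equivalent to $\mathrm{Rep}(\mbbZ_2)$: one identifies it as the full subcategory generated by the non-trivial invertible simple object predicted by the $SO(N)_2$ fusion rules. De-equivariantizing by this subcategory yields a braided fusion category which rank and dimension considerations force to be pointed with underlying group $\mbbZ_N$, hence of the form $\mcC(\mbbZ_N, q)$ for some non-degenerate quadratic form $q$. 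Since the $\mbbZ_2$-action on the de-equivariantization must respect the ambient $SO(N)_2$ fusion structure, it must act by inversion on $\mbbZ_N$; re-equivariantizing then exhibits $\mcC$ as a $\mbbZ_2$-gauging of $\mcC(\mbbZ_N, q)$.

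For (b), I would count via the Etingof--Nikshych--Ostrik classification of faithfully graded extensions. Once the $\mbbZ_2$-action (inversion) is fixed, the inequivalent gaugings of $\mcC(\mbbZ_N, q)$ are controlled by a cohomological obstruction that must vanish and, upon vanishing, by a torsor over the relevant $H^2$ and $H^3$ groups. Enumerating the admissible quadratic forms $q$ on $\mbbZ_N$ and combining with the gauging freedom yields the stated totals. The dichotomy between $a \leq 1$ and $a \geq 2$ arises because $(\mbbZ/N\mbbZ)^\times / ((\mbbZ/N\mbbZ)^\times)^2$ changes size across the threshold $4 \mid N$, producing strictly more equivalence classes of cyclic modular inputs.

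The main obstacle will be the case $a \geq 2$ of part (a): when $4 \mid N$ the inversion map on $\mbbZ_N$ has fixed locus of order $4$, so the obstruction theory for gauging is noticeably more delicate and several candidate extensions must be ruled in or out. Care is also required to verify that the $3 \cdot 2^{s+2}$ predicted categories in this regime are pairwise inequivalent rather than differing by a Galois conjugation of modular data; I expect this to follow from tracking $T$-matrix eigenvalues on the non-fixed simple objects together with the universal grading group of each candidate.
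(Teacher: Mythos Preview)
Your outline has the right shape but contains several genuine gaps, and one factual error.

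First, the de-equivariantization $\mcC_{\mbbZ_2}$ is not pointed: it has dimension $2N$ and is a \emph{generalized Tambara--Yamagami} category, with two non-invertible simples of dimension $\sqrt{N/2}$ sitting in the non-trivial graded component. Only the trivial component $[\mcC_{\mbbZ_2}]_0$ of the $\mbbZ_2$-crossed structure is modular of dimension $N$, and it is \emph{this} that you must identify with $\mcC(\mbbZ_N,q)$. More seriously, ``rank and dimension considerations'' do not force the underlying abelian group of $[\mcC_{\mbbZ_2}]_0$ to be cyclic: a pointed modular category of dimension $N$ is $\mcC(A,q)$ for \emph{some} abelian $A$ of order $N$, and cyclicity of $A$ is the main technical content here. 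The paper proves it by an explicit inductive argument, tracking how the two preimages $Y_0^{(1)},Y_0^{(2)}$ of the simple $Y_0$ tensor-generate all of $[\mcC_{\mbbZ_2}]_0$ (see Theorem~\ref{thm:particleHoleAnalog}). That this is genuinely delicate is witnessed by the degenerate case $N=4$, where both $\mbbZ_4$ and $\mbbZ_2\times\mbbZ_2$ actually occur as $[\mcC_{\mbbZ_2}]_0$. Also, when $4\mid N$ there are three non-trivial invertibles $f,g,fg$, not one; you must first prove that a specific one (namely $fg$) is a boson (Lemma~\ref{even_boson}).

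Second, the inversion map on $\mbbZ_N$ with $4\mid N$ has fixed locus $\{0,N/2\}$ of order $2$, not $4$.

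Third, your explanation of the dichotomy in (b) is off. The number of non-degenerate quadratic forms on $\mbbZ_N$ is $2^s$, $2^{s+1}$, $2^{s+2}$ according as $a=0$, $a=1$, $a\geq 2$; but if one simply multiplied by $|H^3(\mbbZ_2,U(1))|=2$ gaugings each, the $a\geq 2$ count would be $2^{s+3}$, not $3\cdot 2^{s+2}$. The factor of $3$ arises because the torsor $H^2_\rho(\mbbZ_2,\mbbZ_N)\times H^3(\mbbZ_2,U(1))\cong\mbbZ_2\times\mbbZ_2$ gives four candidate gaugings, but a Frobenius--Schur indicator computation on the defects $\sigma_\pm$ shows that two of them are identified by relabeling, leaving three (Proposition~\ref{prop:count}). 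Your proposed mechanism via $(\mbbZ/N\mbbZ)^\times/((\mbbZ/N\mbbZ)^\times)^2$ does not produce this $3$.
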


As we mentioned above, for strictly weakly integral modular categories the structure can sometimes be determined by the dimension.  Of course this is far from true for integral modular categories. There are many inequivalent modular categories of dimension $2^{2n}$: simply take the twisted double of a finite group of order $2^n$.  On the other hand, the dimension of a non-integral category $\mcC$ can be factored over the Dedekind domain $\mbbZ[\zeta_k]$ where $k$ is the order of the $T$-matrix (see \cite{BNRW1}) and it makes sense to look at such prime factorizations.  For example:

\begin{question} How much of the structure of a non-integral modular category $\mcC$ can be determined from the primes dividing the ideal $\langle\dim\mcC\rangle$ (in an appropriate $\mbbZ[\zeta_k]$)?
\end{question}

As metaplectic modular categories are ubiquitous among weakly integral modular categories as well as anyon models \cite{HNW}, an important open problem is to prove Conjectures \ref{Property_F_conjecture} and \ref{Conjecture_Localizable} for metaplectic anyons.  Some partial results are found in \cite{RWe1}.

We structure the paper as follows: Section 2 contains the basic definitions that we use in subsequent sections, including various notions of dimension.  Section 3 surveys properties of unitary braided fusion categories that are (conjecturally, in some cases) determined by dimension.  Section 4 completes the classification and enumeration of metaplectic modular categories and Section 5 applies this classification to classify certain modular categories of dimension $16m$ with $m$ square-free.
\section{Preliminaries and Dimension Functions}\label{Section: Preliminaries}
   We outline the axioms for the categories we are
interested in, referring the reader to \cite{BKi,ENO1} for further details. 

A modular tensor category is a nondegenerate ribbon fusion category
defined over a subfield $k \subset \mathbb{C}$. We unravel these adjectives
with the following definitions.
\begin{enumerate}
\item A \textbf{monoidal category} is a category with
 a tensor product $\otimes$
and an identity object $\one$ satisfying axioms that guarantee that
the tensor product is associative (at least up to isomorphism) and
that
\begin{equation*}\label{idob}
\one\otimes X\cong X\otimes\one\cong X
\end{equation*}
 for any object $X$. 
\item A monoidal category has left \textbf{duality} if there is a dual
object $X^*$ for each object $X$ and morphisms
$$coev_X: \one \rightarrow X\otimes X^*, \qquad ev_X: X^*\otimes X \rightarrow \one$$
satisfying
\begin{eqnarray*}
(\one_X\otimes ev_X)(coev_X\otimes \one_X)&=&\one_X, \\
(ev_X\otimes \one_{X^*})(\one_{X*}\otimes coev_X)&=&\one_{X^*}.
\end{eqnarray*} Right duality is defined similarly. The duality
allows us to define (left) duals of morphisms too: for any\\ $f\in
\Hom(X,Y)$ we define $f^*\in \Hom(Y^*,X^*)$ by:
$$f^*=
(ev_Y\otimes \one_{X^*})(\one_{Y^*}\otimes f\otimes
\one_{X^*})(\one_{Y^*}\otimes coev_X).$$
A monoidal category that has left and right duality is called \textbf{rigid}.
\item A
\textbf{braiding} in a monoidal category is a natural family of isomorphisms
$$c_{X,Y}: X\otimes Y \rightarrow Y\otimes X$$
satisfying
\begin{eqnarray*}
c_{X,Y\otimes Z}&=&(\one_Y\otimes c_{X,Z})(c_{X,Y}\otimes \one_Z), \\
c_{X\otimes Y,Z}&=&(c_{X,Z}\otimes \one_Y)(\one_X\otimes c_{Y,Z}).
\end{eqnarray*}
\item A \textbf{twist} in a braided monoidal category is a natural family of isomorphisms
$$\theta_X: X \rightarrow X$$
satisfying:
\begin{eqnarray*}
\theta_{X\otimes Y}&=&c_{Y,X}c_{X,Y}(\theta_X\otimes \theta_Y).
\end{eqnarray*}
\item In the presence of a braiding, a twist and duality these structures
are compatible if
\begin{eqnarray*}
\theta_{X^*}=(\theta_X)^*.
\end{eqnarray*}  A braided monoidal category with a twist and a compatible duality is
a \textbf{ribbon} category.

\item An abelian $k$-linear category is \textbf{semisimple} if it has the property that every object
$X$ is isomorphic to a finite direct sum of \emph{simple}
objects--that is, objects $X_i$ with $\End(X_i)\cong k$ satisfying the conclusion of Schur's Lemma:
$$\Hom(X_i,X_j)=0\quad \text{for $i\neq j$}.$$ 
\item In a ribbon abelian $k$-linear category one may define a $k$-linear \textbf{trace} of
endomorphisms.  Let $f\in\End(X)$ for some object $X$.  Set:
\begin{eqnarray*}
\Tr(f)=ev_X c_{X,X^*}(\theta_X f\otimes\one_{X^*})coev_X.
\end{eqnarray*}
Note that this is an element of $\End(\one)\cong k$, and so may be identified with a scalar.
Similarly, since $\theta_X\in\End(X)$ for any object $X$, $\theta_{X_i}$ is a scalar
map (as $X_i$ is simple).  We denote this scalar by $\theta_i$.

\item A semisimple rigid monoidal category is called a \textbf {fusion category} (of \textbf{rank} $r$) if it has finitely many isomorphism classes of simple objects enumerated as $\{X_0=\one,X_1,\ldots,X_{r-1}\}$.  In particular, the monoidal unit $\one$ is simple.  It is known \cite[Prop. 4.8.1]{EGNO} that in a fusion category left and right dualities coincide. 

\item A \textbf{premodular category} is a ribbon fusion category.  A premodular category is \textbf{modular}  if its so-called \emph{$S$-matrix} with entries $$S_{i,j}:=\Tr(c_{X_j,X_i}\circ
c_{X_i,X_j})$$ is invertible. Observe that $S$ is a symmetric
matrix.

\item A \textbf{Hermitian} ribbon fusion category $\mcC$ is equipped with an additive involutive operation $\dag$ on morphisms, $\dag:\Hom(X,Y)\rightarrow\Hom(Y,X)$, that is compatible with $\ot$, composition, braiding ($c_{X,Y}^\dag=c_{X,Y}^{-1}$), twists and rigidity morphisms \cite{TuraevWenzl97}.  In particular $(f,g):=\tr_\mcC(fg^{\dag})$ is a non-degenerate Hermitian form on $\Hom(X,Y)$.  If, in addition, $\dag$ acts on $\mbbC$ by complex conjugation and $(f,g)$ is a positive definite form then we say $\mcC$ is \textbf{unitary}.
\end{enumerate}

\begin{remark}
The term ``modular" comes from the following fact: if we set
$T=(\delta_{i,j}\theta_i)_{ij}$ then the map:
$$\begin{pmatrix}
0 &  -1\\
 1 &  0
\end{pmatrix}\rightarrow S, \begin{pmatrix}
1 &  1\\
 0 &  1
\end{pmatrix}\rightarrow T$$
defines a projective representation of the \emph{modular}
\emph{group} $SL(2,\mbbZ)$.  In fact, by re-normalizing $S$ and $T$ one
gets an honest representation of $SL(2,\mbbZ)$.
\end{remark}

\subsection{Dimension functions}
Let $\mcC$ be a fusion category of rank $r$ with $X_0=\one,X_1,\ldots,X_{r-1}$ a collection of representatives of the distinct isomorphism classes of simple objects.
 The Grothendieck semi-ring $K_0(\mcC)$ of $\mcC$ encodes the fusion rules of $\mcC$: it is the based $\mbbZ_+$-ring with basis the simple isomorphism classes of objects and the operations induced by the direct sum $\oplus$ and the tensor product $\otimes$ \cite{Ostrik}.  A \textit{dimension function} on a fusion category $\mcC$ is a unital ring homomorphism $K_0(\mcC)\rightarrow \mbbC$.  Thus the collection of dimension functions on a fusion category only depends on $K_0(\mcC)$.  
 A fusion category with a \textit{commutative} Grothendieck ring (e.g. a braided fusion category) has exactly as many dimension functions as it has simple isomorphism classes of objects (see e.g. \cite[Theorem 2.3]{BGNPRW2}).  A non-commutative fusion category may admit only the trivial dimension function: $\Vec_G$ for a perfect group $G$ is such an example, since a dimension function for $\Vec_G$ is a linear character of $G$.
 
 We will describe three dimension functions each of a rather distinct nature, and then argue that they are identical in the physically relevant case.
 
 \textbf{FP-dimension}
 The fusion rules $N_{i,j}^k:=\dim\Hom(X_i\ot X_j,X_k)$ supply us with an explicit realization of the left-regular representation of $K_0(\mcC)$ via $X_i\rightarrow N_i$ where $(N_i)_{k,j}:=N_{i,j}^k$.  Indeed, $X_i\ot X_j\cong\bigoplus_k N_{i,j}^kX_k$ so that it suffices to check the corresponding matrix equation (exercise) and then extend to $K_0(\mcC)$ linearly $X\rightarrow N_X$.  The first dimension function is the \textit{Frobenius-Perron (FP) dimension}, defined as $\FPdim(X):=\max \Spec(N_X)$ for simple, i.e. the maximal eigenvalue of the fusion matrix $N_i$ for simple $X$.  The existence of such an eigenvalue follows from the Perron-Frobenius theorem applied to the positive matrix $\sum_{i,j}N_iN_jN_i^{T}$ that commutes with each $N_k$. (exercise, see \cite[Section 8]{ENO1}).  The fact that $\FPdim$ is a dimension function and is uniquely determined by $\FPdim(\one)=1$  and $\FPdim(X_i)>0$ is found in \cite{ENO1}.
 
 \textbf{Asymptotic Dimension}
 In the categorical model for topological phases of matter, the state space of $n$ anyons of type $X$ on a disk with boundary labeled by the vacuum anyon $\one$ is $\Hom(\one,X^{\otimes n})$.  How does $\dim_\mbbC\Hom(\one,X^{\ot n})$ grow with $n$?  If $X$ were simply a $d_X$-dimensional vector space over $\mbbC$ and $\one=\mbbC$ then $\dim_\mbbC\Hom(\one,X^{\ot n})=d_X^n$ for all $n$.  More generally a natural measure of the dimension of an object $X$ in a fusion category is a constant $d_X>0$ such that $\dim_\mbbC\Hom(\one,X^{\ot n})$ grows like $d_X^n$, i.e. the asymptotic dimension of $\Hom(\one,X^{\ot n})$.  Of course it can happen that  $\dim_\mbbC\Hom(\one,X^{\ot n})$ is $0$ for many $n$, but there is a minimal $k>0$ such that $\dim_\mbbC\Hom(\one,X^{\ot kn})>0$ for all $n>0$ (see \cite[Lemma F.6]{DGNO1}), so that we may instead define $d_X>0$ to be a constant such that $\dim_\mbbC\Hom(\one,X^{\ot k(n+1)})/\dim_\mbbC\Hom(\one,X^{\ot kn})\approx (d_X)^k$.  The Perron-Frobenius theorem applied to the fusion matrix $N_X$ implies that $d_X=\FPdim(X)$ (exercise, compute $\dim_\mbbC\Hom(\one,X^{\ot kn})$ in terms of the entries of $N_X$).  Thus the asymptotic dimension coincides with the FP-dimension and in particular is a dimension function.
 
 We illustrate this with two familar examples:
 \begin{example}\label{Example_Fib}
Consider $\mcC = \Fib$ the Fibonacci modular category. There are 2 simple objects $\one, X$ in $\mcC$. Since $X\otimes X = \one \oplus X$ then the fusion matrix is $N_X = \paren{\begin{smallmatrix}
      0 & 1  \\
      1 & 1
    \end{smallmatrix}}$ and it has eigenvalues $\frac{1\pm \sqrt{5}}{2}$. Therefore, $\dim (X) = \frac{1 + \sqrt{5}}{2}$.
    
    The fusion rules of the category $\mcC$ are given by the Fibonacci numbers in the following way $X^{\otimes i} = F(i-1)\one \oplus F(i) X$. Then we can compute:
    $$\lim\limits_{i\to\infty} \frac{\dim \Hom_{\mcC}(X^{\otimes 2i}, \one)}{\dim \Hom_{\mcC}(X^{\otimes 2(i-1)}, \one)} = \lim\limits_{i\to\infty} \frac{F(2i-2)}{F(2i-4)} =\left(\frac{1 + \sqrt{5}}{2}\right)^2.$$
    
\end{example}
\begin{example}\label{ising ex} An Ising modular category $\mathcal{I}^{\nu}$ is any of the $8$ non-integral $4$-dimensional unitary modular categories \cite{DMNO1}.  There are three simple objects $\one$, $\psi$ and $\sigma$.  The fusion matrix for $\sigma$ is $\begin{pmatrix} 0 & 0& 1\\0 &0 &1
\\ 1 & 1 & 0\end{pmatrix}$, so that $\dim{\sigma}=\sqrt{2}$.  Computing, we have $\sigma^{2n}=2^{n-1}(\one\oplus \psi)$ so that $$\frac{\dim \Hom_{\mathcal{I}^\nu}(\sigma^{\otimes 2i}, \one)}{\dim \Hom_{\mathcal{I}^\nu}(\sigma^{\otimes 2(i-1)}, \one)}=\frac{2^{i-1}}{2^{i-2}}=(\sqrt{2})^2.$$
\end{example}
\textbf{Categorical Dimension}
 Now let $\mcC$ be a ribbon fusion category, and $X\in\mcC$.  
Now since $\End(\one)\cong \mbbC$, with basis $\Id_\one$, we may define $\dim(X)$ to be the coefficient of $\Id_\one$ in $\tr_\mcC(\Id_X)$.  In \cite{Turaev92} it is shown that $\dim(-)$ is a dimension function (exercise: show that $\tr_\mcC$ is multiplicative with respect to $\ot$, using naturality of $c$ and $\theta$.).
In fact, one may define a categorical dimension for any spherical fusion category using the pivotal trace, but we will focus on the braided case.

In particular, if $\mcC$ is unitary, then $(Id_X,Id_X)=\tr_\mcC(Id_X)=\dim(X)>0$ for all $X$. Hence for a unitary category $\dim(X)=\FPdim(X)$ for all $X$.

Unless otherwise stated, in the rest of this article we will assume that $\dim(X)=\FPdim(X)$ for all objects.  In many situations (for example if $\FPdim(X)^2\in\mbbZ$ for all simple $X$) it is possible to replace the twists (or equivalently the spherical structure) on a ribbon fusion category by another (unique) choice to ensure that $\dim(X)=\FPdim(X)$ (see \cite{ENO1}).  On the other hand, there are examples \cite{RowellJPAA08} of modular categories whose underlying fusion rules do not admit a unitary ribbon categorification.
The positivity of $\dim$ actually implies a slightly stronger condition:
\begin{theorem} For every object $X$,
$\dim X \geq 1$.
\end{theorem}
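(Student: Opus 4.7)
The plan is to reduce the claim to simple objects and then combine multiplicativity of $\dim$ with the occurrence of $\one$ as a summand of $X\ot X^*$. (The statement is understood for nonzero $X$, since $\dim(0)=0$.)

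First, I would observe that any nonzero object decomposes as $X\cong\bigoplus_i n_iX_i$ with non-negative integers $n_i$, not all zero. By additivity of $\dim$ on direct sums and positivity of $\dim$ on simples, $\dim(X)\geq \dim(X_{i_0})$ for any index $i_0$ with $n_{i_0}\geq 1$. Hence it suffices to prove $\dim(X_i)\geq 1$ for each simple $X_i$.

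Second, for simple $X$ I would exploit rigidity: the duality adjunction together with simplicity yields $\dim\Hom(\one,X\ot X^*)=\dim\End(X)=1$, so $\one$ occurs (with multiplicity one) as a direct summand of $X\ot X^*$. Additivity and positivity of $\dim$ on the remaining simple summands then force $\dim(X\ot X^*)\geq\dim(\one)=1$. Combining this with multiplicativity $\dim(X\ot X^*)=\dim(X)\dim(X^*)$ and the equality $\dim(X^*)=\dim(X)$ (which holds for $\FPdim$ because $N_{X^*}=N_X^{T}$ has the same spectrum as $N_X$, and in particular the same Perron--Frobenius eigenvalue) gives $\dim(X)^2\geq 1$. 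Positivity of $\dim$ then produces the desired bound $\dim(X)\geq 1$.

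I do not anticipate a serious obstacle here: the argument relies entirely on features of $\dim$ already recorded in the preceding subsection---that it is a positive unital ring homomorphism $K_0(\mcC)\to\mbbC$---together with the rigidity of $\mcC$. The only minor point to verify is the duality-invariance $\dim(X^*)=\dim(X)$, which is immediate from the spectral description of $\FPdim$.
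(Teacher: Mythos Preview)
Your argument is correct and is genuinely different from the paper's. You exploit rigidity directly: since $\one$ is a summand of $X\ot X^*$ for simple $X$, positivity and multiplicativity of $\dim$ give $\dim(X)^2=\dim(X\ot X^*)\geq 1$. The paper instead argues by contradiction through the spectral interpretation: if $\dim(X)<1$ then the spectral radius of the integer matrix $N_X$ is below $1$, forcing $N_X^n\to 0$ and hence $N_X$ nilpotent, which contradicts $\dim\Hom(\one,X^{\ot kn})>0$ for all $n$.

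Your route is shorter and more elementary---it uses only that $\dim$ is a positive unital ring homomorphism plus the identity $N_{X^*}=N_X^T$---and avoids the detour through nilpotency of integer matrices. The paper's proof, on the other hand, ties the inequality to the asymptotic-dimension picture emphasized in that subsection, making the connection to growth of $\Hom(\one,X^{\ot n})$ explicit. Both are entirely valid; yours is arguably the cleaner self-contained argument, while the paper's reinforces the narrative linking dimension to asymptotic state-space growth.
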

\begin{proof}
The dimension $\dim(X)$ is strictly positive and dominates all other eigenvalues of $N_X$ in modulus.  Thus, if $\dim X < 1$, we must have $N_X$ nilpotent since $N_X^n$ would tend to $0$. But  $\dim \Hom(X^{\otimes n}, \one)>0$ for some $n$ by \cite[Lemma F.6]{DGNO1}, so that $N_X^n$ must be non-zero.
\end{proof}

We introduce the following standard notation:
\begin{itemize}
  \item If $X\in\mcC$ has $\dim(X)=1$ we say that $X$ is \textit{invertible}.  If every simple $X\in\mcC$ is invertible, we say $\mcC$ is \textit{pointed}.
  \item If $X\in\mcC$ has $\dim(X)\in \mbbZ$ we say that $X$ is \textit{integral}.  If every simple $X\in\mcC$ is integral, we say $\mcC$ is \textit{integral}.
    \item If $X\in\mcC$ is simple and has $\dim(X)^2\in \mbbZ$ we say that $X$ is \textit{weakly integral}.  If every simple $X\in\mcC$ is weakly integral, we say $\mcC$ is \textit{weakly integral}.  If, moreover, there exists a simple object $X\in\mcC$ with $\dim(X)\not\in\mbbZ$ we say $\mcC$ is \textit{strictly weakly integral}.
\end{itemize}

\begin{remark}\label{formula Xotimesdual}  While invertibility is usually defined as $X\ot X^*\cong \one$, the definition above is equivalent.  Indeed,
let $\mcC$ be a fusion category and $X\in \mcC$. Consider $$G[X] = \{Y\in \mcC \mid Y\otimes X \cong X\}.$$ If $Y\in G[X]$, then $\FPdim Y = 1$.

Moreover, $X\otimes X^* = \sum_{Y\in G[X]} Y + \sum_{\FPdim Z > 1} N^Z_{X, X^*} Z$.
\end{remark}


\subsection{Some basic modular categories}

A significant role is played by pointed modular categories, i.e. modular categories with only invertible simple objects.  The classification of pointed modular categories is well-known (going back essentially to \cite{EM}, also see \cite{DGNO1}): they correspond to pairs $(A,q)$ where $A$ is a finite abelian group and $q$ is a non-degenerate quadratic form $q:A\rightarrow \mbbQ/\mbbZ$ i.e. $q(-a)=q(a)$ and the symmetric bilinear form on $A$ defined by $\sigma(a,b)=q(a+b)-q(a)-q(b)$ is non-degenerate.  We denote such a category by $\mcC(A,q)$.   In \cite{DGNO1} these categories are called \textbf{metric groups}.

  The Semion and Ising categories will also appear in the sequel. The Semion categories, denoted by $\Sem$, are modular categories with the same fusion rules as
  $\Rep\paren{\mbbZ_{2}}$, see
  \cite{RSW} for details.  
%
  The Ising categories introduced in Example \ref{ising ex} are rank 3 modular categories with simple objects:
  $\one$, $\psi$ (a fermion), and $\s$ (the Ising anyon) of dimension $\sqrt{2}$.
  The key fusion rules are $\ps^{2}=\one$ and
  $\s^{2}=\one+\ps$; while the modular datum can be found in \cite{RSW}.  There are exactly $8$ inequivalent Ising categories.  They are the smallest examples of \textbf{(generalized) Tambara-Yamagami categories}, \textit{i.e.} non-pointed fusion categories with the property that the tensor product of any two non-invertible simple objects is a direct sum of invertible objects.  In fact, by \cite{Nat} any \textit{modular} generalized Tambara-Yamagami category is a (Deligne) product of an Ising category and a pointed modular category.

\subsubsection{Balancing}
Standard arguments show that the entries of the $S$-matrix are
determined by the dimensions, the fusion rules and the
twists on the simple objects, giving the following
extremely useful balancing relation (see \cite{BKi}):
\begin{eqnarray}\label{sformula}
S_{i,j}=\frac{1}{\theta_i\theta_j}\sum_k N_{i^*,j}^k \dim(X_k) \theta_k.
\end{eqnarray}

\subsection{Centralizers}
  For $\mcD\subset \mcC$ premodular categories the
  \textbf{centralizer} of $\mcD$ in $\mcC$ is denoted by $\mcZ_{\mcC}\paren{\mcD}$
  and is generated by the objects $\lcb X\in\mcC\mid S_{X,Y}=d_{X}d_{Y}\;\forall
  Y\in\mcD\rcb$ (see \cite{Brug1}. 
  The category $\mcZ_{\mcC}\paren{\mcC}$ is called the \textbf{M\"{u}ger center} and is often denoted
  by $\mcC'$. Note that a useful characterization of a simple object being
  outside of the M\"{u}ger center is that its column in the $S$-matrix is
  orthogonal to the first column of $S$.
If
  $\mcC'=\Vec$, the category of finite-dimensional vector spaces, then $\mcC$ is a modular category whereas if
  $\mcC'=\mcC$, then $\mcC$ is called \textbf{symmetric}. 
  
  \subsubsection{Symmetric fusion categories}
  
  Symmetric fusion
  categories are classified in terms of group data:
  \begin{theorem}[\cite{D1}]
    If $\mcC$ is a symmetric fusion category, then there exists a finite group $G$ such that
    $\mcC$ is equivalent to the super-Tannakian category $\Rep(G,z)$ of
    super-representations (i.e. $\mbbZ_2$ graded) of $G$ where $z\in Z(G)$ is a distinguished central element with $z^2=1$
 acting as the parity operator.
  \end{theorem}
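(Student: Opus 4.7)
The plan is to appeal to super-Tannakian reconstruction, by constructing a symmetric tensor functor $F:\mcC\to\mathrm{sVec}$ into the category of finite-dimensional super vector spaces and then recovering $(G,z)$ as the affine group scheme of tensor automorphisms of $F$, with $z$ the element corresponding to the $\mbbZ_2$-grading.

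First, I would establish that every object in a symmetric fusion category has integer categorical dimension after a suitable choice of spherical structure. Because the braiding satisfies $c_{Y,X}\circ c_{X,Y}=\mathrm{id}_{X\otimes Y}$, each tensor power $X^{\ot n}$ carries a natural action of the symmetric group $S_n$ by permuting tensor factors, and in characteristic $0$ the Young symmetrizers decompose $X^{\ot n}$ into isotypic summands indexed by partitions of $n$. Taking dimensions of the antisymmetric summand $\Lambda^{k}X$ gives a polynomial $p_X(k)$ with integer values whose positivity forces $\dim(X)\in\mbbZ$ (once the sign ambiguity of the pivotal structure is absorbed into a $\mbbZ_2$-grading). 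In particular, if $|\dim(X)|=n$, then $\Lambda^{n+1}X=0$; this bound is the engine that drives the existence of a fiber functor.

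Second, I would use these antisymmetrizer vanishings to construct the super-fiber functor $F:\mcC\to\mathrm{sVec}$. Concretely, simple objects whose renormalized dimension is positive map to purely even super vector spaces, while those of negative dimension map to purely odd super vector spaces, and the antisymmetrizer identities ensure that $F$ respects tensor products and the symmetric braiding on the nose. Then setting $G=\mathrm{Aut}^{\ot}(F)$, the classical super-Tannakian reconstruction theorem (Saavedra–Deligne–Milne) produces a group scheme $G$ with a distinguished central element $z\in Z(G)$ of order dividing $2$ realizing the parity grading, together with an equivalence $\mcC\simeq\Rep(G,z)$. Finiteness of $G$ follows from $\mcC$ being a fusion category: $\Rep(G,z)$ has finitely many simple objects, so $G$ must be a finite group scheme, hence (in characteristic $0$) a finite group.

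The main obstacle is the integrality of dimensions in the first step, which is the real content of Deligne's theorem \cite{D1} and requires a delicate analysis of how $S_n$-isotypic decomposition interacts with the abstract categorical trace. The second and third steps are then essentially formal: once an $\mathrm{sVec}$-valued fiber functor is in hand, Tannakian reconstruction is a classical black box, and the fusion hypothesis on $\mcC$ immediately restricts $G$ to the finite setting required by the statement.
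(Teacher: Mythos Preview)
The paper does not prove this theorem at all: it is stated with a citation to Deligne \cite{D1} and used as a black box. So there is no ``paper's own proof'' to compare against.

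As for your sketch on its own merits: the overall architecture is correct and does follow Deligne. The symmetric group acts on $X^{\ot n}$, one shows that sufficiently high exterior powers (more generally, Schur functors for partitions with more than $|\dim X|$ rows) vanish, and this vanishing is what permits the construction of a super-fiber functor; then Tannakian reconstruction yields $(G,z)$, and the fusion hypothesis forces $G$ finite. That is exactly the strategy.

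However, your second step glosses over the genuine content. You write that ``simple objects whose renormalized dimension is positive map to purely even super vector spaces, while those of negative dimension map to purely odd super vector spaces, and the antisymmetrizer identities ensure that $F$ respects tensor products.'' This is not a construction: declaring what $F$ should do on simple objects does not produce a tensor functor, and compatibility with tensor products is precisely the hard part. Deligne's actual construction of the fiber functor occupies the bulk of \cite{D1} and proceeds by an intricate inductive argument using the vanishing of Schur functors to build a commutative ind-algebra object $A$ in $\mcC$ whose category of modules is (super-)neutral; the fiber functor is then $\Hom(\one, A\otimes -)$. Your sketch correctly identifies this step as ``the main obstacle,'' but the sentence describing $F$ suggests you may be underestimating what is involved. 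If you intend this as a proof sketch rather than a citation, that step needs either substantially more detail or an explicit deferral to \cite{D1}.
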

  Observe that $\mcC\cong\Rep(G,z)$ with $z=e\in G$ if and only if the $\mbbZ_2$-grading is trivial so that $\Rep(G,z)=\Rep(G)$.  In this case we say $\mcC$ is \textbf{Tannakian}, and otherwise we say it is non-Tannakian.  The smallest non-Tannakian symmetric category is $\Rep(\mbbZ_2,1)$ which we will denote by $\sVec$ when equipped with the (unique) structure of a ribbon category with $\dim(\chi)=1$ for a non-trivial object $\chi$.
 
 An invertible
  object in $\mcC$ that generates the Tannakian category $\Rep\paren{\mbbZ_{2}}$ is
  a {boson} while an invertible object in $\mcC$
  generating $\sVec$ is a {fermion}.  Bosons are useful through a process known as de-equivariantization  which we will discuss shortly.  A modular category $\mcC$ with a distinguished fermion $f\in\mcC$ is called a \textbf{spin modular} category (see, eg. \cite{BGNPRW2}).

\subsection{Gradings}
  A \textbf{grading} of a fusion category $\mcC$ by a finite group $G$ is a
  decomposition of the category as direct sum $\mcC = \oplus_{g\in G} \mcC_g$,
  where the components are full abelian subcategories of $\mcC$ indexed by the
  elements of $G$, such that the tensor product maps
  $\mcC_g\times \mcC_h$ into $\mcC_{gh}$.  The \textbf{trivial component}
  $\mcC_e$ (corresponding to the unit of the group $G$) is a fusion subcategory
  of $\mcC$. The grading is called \textbf{faithful} if $\mcC_g \neq 0$, for all
  $g\in G$, and in this case all the components are equidimensional with
  $\abs{G}\dim \mcC_g = \dim\mcC$.
  

  Every fusion category $\mcC$ is faithfully graded by the universal grading group $\mcU\paren{\mcC}$
   and every faithful grading of
  $\mcC$ is a quotient of $\mcU\paren{\mcC}$. Furthermore, the trivial component
  under the universal grading is the \textbf{adjoint subcategory} $\mcC_{\ad}$, the full fusion subcategory generated by the objects $X\otimes X^*$, for $X$ simple.  
  The universal grading was first studied in \cite{GN2}, and it was shown that
  if $\mcC$ is modular, then $\mcU\paren{\mcC}$ is canonically isomorphic to the
  character group of the group $G(\mcC)$ of isomorphism classes of invertible objects in
  $\mcC$. Another useful fact is that $\mcC_{ad}' = \mcC_{pt}$, the subcategory generated by the invertible objects \cite{GN2}.
  
  When $\mcC$ is a weakly integral fusion category there is another useful grading called the \textbf{GN-grading} first studied in \cite{GN2}:
  \begin{theorem}\cite[Theorem 3.10]{GN2}
    Let $\mcC$ be a weakly integral fusion category. Then there is an elementary
    abelian $2$-group $E$, a set of distinct square-free positive
    integers $n_x$, $x \in E$, with $n_0 = 1$, and a faithful grading $\mcC =
    \oplus_{x\in E} \mcC(n_x)$ such that $\dim(X) \in \mathbb Z \sqrt{n_x}$
    for each $X \in \mcC(n_x)$.
  \end{theorem}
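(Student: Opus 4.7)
The plan is to construct the grading explicitly via squarefree parts of squared dimensions. For each simple $X\in\mcC$, since $\FPdim(X)^2\in\mbbZ_{>0}$ we can write $\FPdim(X)=a_X\sqrt{n_X}$ uniquely with $a_X\in\mbbZ_{>0}$ and $n_X$ a squarefree positive integer. Let $\mcC(n)$ denote the full abelian subcategory generated by the simple objects $X$ with $n_X=n$, and let $E$ be the set of squarefree positive integers $n$ for which $\mcC(n)\neq 0$. Define $n_1\star n_2$ to be the squarefree part of the ordinary product $n_1n_2$. This is an associative, commutative operation with identity $1$, and every element is its own inverse since $n\star n=1$; so once closure on $E$ is verified, $(E,\star)$ is an elementary abelian $2$-group with $n_0=1$.

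The heart of the argument is showing that $\mcC=\bigoplus_{n\in E}\mcC(n)$ is a grading, i.e., that for simples $X\in\mcC(n_1)$ and $Y\in\mcC(n_2)$, every simple constituent of $X\otimes Y$ lies in $\mcC(n_1\star n_2)$. Decomposing $X\otimes Y=\bigoplus_i m_i Z_i$ with $Z_i$ simple and $m_i\in\mbbZ_{\geq 0}$, taking $\FPdim$ of both sides gives
\begin{equation*}
a_X a_Y\sqrt{n_1n_2}=\sum_i m_i a_{Z_i}\sqrt{n_{Z_i}}.
\end{equation*}
Writing $n_1n_2=k^2n_3$ with $n_3$ squarefree, the left side becomes $a_Xa_Y k\sqrt{n_3}$. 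Here I invoke the classical fact that $\{\sqrt{d}:d\text{ squarefree positive integer}\}$ is $\mbbQ$-linearly independent. Since every coefficient appearing is a nonnegative integer, this forces $n_{Z_i}=n_3=n_1\star n_2$ for every $i$ with $m_i>0$. In particular $E$ is closed under $\star$, so it genuinely inherits the elementary abelian $2$-group structure claimed above.

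It remains to collect the bookkeeping: $\one\in\mcC(1)$ shows the identity component is nontrivial; the $n_x$ are distinct by construction; faithfulness is automatic because $E$ was defined to consist only of those $n$ with $\mcC(n)\neq 0$. The equidimensionality consequence $|E|\dim\mcC(n)=\dim\mcC$ is not part of the statement but follows from the general properties of faithful gradings quoted immediately above the theorem. The only potentially substantive input is the linear independence of $\{\sqrt{d}\}_d$ over $\mbbQ$, which is a classical number-theoretic fact; given it, the whole argument is a formal manipulation of Frobenius--Perron dimensions, so I do not expect any serious obstacle beyond setting up the right bookkeeping.
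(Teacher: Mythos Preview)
Your proof is correct. Note, however, that the paper does not itself prove this theorem---it is quoted as background material from \cite{GN2} (Gelaki--Nikshych, Theorem 3.10), so there is no in-paper proof to compare against. Your argument is essentially the original one: assign to each simple object the squarefree part of its squared Frobenius--Perron dimension, and deduce compatibility with the tensor product from the $\mbbQ$-linear independence of $\{\sqrt{d}:d\text{ squarefree}\}$.
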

The trivial component of the $GN$-grading is the integral subcategory \textbf{$\mcC_{\intcat}$}.

\subsection{Equivariantization}
Our approach to classification relies upon \textit{equivariantization} and its inverse functor 
  \textit{de}-\textit{equivariantization} (see
  for example \cite{DGNO1}) which we now briefly describe.
  An \textit{action} of a finite group $G$ on a fusion category $\mcC$ is a
  strong tensor functor $\rho: \underline{G}\to \End_{\otimes}(\mcC)$.  The
  \textbf{$G$-equivariantization} of the category $\mcC$ is the category $\mcC^G$
  of $G$-equivariant objects and morphisms of $\mcC$. When $\mcC$ is a fusion
  category over an algebraically closed field $\mbbK$ of characteristic $0$, the
  $G$-equivariantization, $\mcC^G$, is a fusion category with  $\dim \mcC^G = |G| \dim \mcC$. The fusion
  rules of $\mcC^G$ can be determined in terms of the fusion rules of the
  original category $\mcC$ and group-theoretical data associated to the group
  action \cite{BuN1}. De-equivariantization is the inverse to equivariantization. Given a fusion category $\mcC$ and a
  Tannakian subcategory $\Rep G\subset\mcC$, consider the algebra $A = \Fun\paren{G}$ of
  functions on $G$. Then $A$ is a commutative algebra in $\mcC$.
  The category of $A$-modules on $\mcC$, $\mcC_{G}$, is a
  fusion category called a \textbf{$G$-de-equivariantization} of
  $\mcC$.  We have
  $\abs{G}\dim \mcC_G = \dim \mcC$, and there are
  canonical equivalences $\paren{\mcC_G}^G \cong \mcC$ and $\paren{\mcC^G}_G
  \cong \mcC$. An important property of the de-equivariantization is that if the
  Tannakian category in question is $\mcC'$, then $\mcC_{G}$ is modular and is
 called the \textbf{modularization} of $\mcC$ \cite{Brug1,M5}.

\subsection{Gauging}
Two processes that we employ in our analysis are gauging and de-gauging.  First let us describe de-gauging. Let $\mcC$ be modular and $\Rep(G)\cong \mcD\subset\mcC$ a Tannakian subcategory (here a Tannakian category is a symmetric braided fusion category equivalent to $\Rep(G)$ for some finite group $G$).  The $G$-de-equivariantization $\mcC_G$ of $\mcC$ is a faithfully $G$-graded category (in fact, a braided $G$-crossed category) with trivial component $[\mcC_G]_e$ a modular category of dimension $\dim(\mcC)/|G|^2$ (see \cite{DMNO1}).  $[\mcC_G]_e$ is the \textbf{$G$-de-gauging} of $\mcC$.  The reverse process, $G$-gauging, is more complicated.  Here one starts with a modular category $\mcB$ and an action of a finite group $G$ by braided tensor autoequivalences: $\rho:G\rightarrow \Aut_{\ot}^{br}(\mcB)$.  A  \textbf{$G$-gauging} of $\mcB$, when it exists, is a new modular category obtained by first constructing a $G$-graded fusion category $\mcD$ with trivial component $\mcD_e=\mcB$ and then equivariantizing $\mcD^G$.  There are obstructions to the existence of a gauging, and when the obstructions vanish there can be many $G$-gaugings.

Here is a key example: consider the pointed (cyclic) modular category $\mcC(\mbbZ_N,q)$.  The elements of the group $\Aut_{\ot}^{br}\mcC(\mbbZ_N,q)$ are simply those $\phi\in\Aut(\mbbZ_N)$ that preserve $q$ \cite{DN}. 
The following is presumably well-known, but its proof is elementary:
\begin{prop}
The \textbf{particle-hole symmetry} $\phi(a)=-a$ is the only non-trivial element of  $\Aut_{\ot}^{br}\mcC(\mbbZ_N,q)$.
\end{prop}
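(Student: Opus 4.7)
The plan is to invoke the identification from \cite{DN} (recalled just above the proposition) of $\Aut_{\ot}^{br}\mcC(\mbbZ_N,q)$ with the orthogonal group $O(\mbbZ_N,q) := \{\phi \in \Aut(\mbbZ_N) : q \circ \phi = q\}$, and then to compute this group directly. Since $\Aut(\mbbZ_N) \cong (\mbbZ/N)^{\times}$ acts by $\phi_k(a) = ka$ and particle-hole is $\phi_{-1}$, the proposition reduces to showing that $q\circ\phi_k = q$ forces $k \equiv \pm 1 \pmod N$.

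I would first reduce the preservation condition to a number-theoretic one via the standard identity $q(na) = n^{2}q(a)$, which follows from $\sigma(a,a) = 2q(a)$ (itself a consequence of $q(0) = 0$ and $q(-a) = q(a)$) combined with the bilinearity of $\sigma$ and a short induction on $n$. Taking $n = k$ rewrites the preservation condition as $(k^{2}-1)\,q(a) \equiv 0 \pmod{\mbbZ}$ for every $a \in \mbbZ_N$, and since $q(a) = a^{2}q(1)$ the whole family collapses to the single equation $(k^{2}-1)\,q(1) \in \mbbZ$.

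Next I would use non-degeneracy of $\sigma$ to promote this to an arithmetic condition on $k$ alone. Writing $q(1) = m/(2N)$, the associated bilinear form is $\sigma(a,b) = (m/N)ab$, which is non-degenerate on $\mbbZ_N$ exactly when $\gcd(m,N) = 1$ (equivalently, $2q(1)$ has order exactly $N$ in $\mbbQ/\mbbZ$). Substituting, the previous equation becomes $2N \mid m(k^{2}-1)$, and after absorbing $\gcd(m,2N)$ it simplifies to a congruence of the form $k^{2} \equiv 1$ modulo $N$ (or modulo $2N$ when $N$ is even and $m$ is odd).

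The main obstacle is the final deduction $k \equiv \pm 1 \pmod N$ from this congruence. When $N$ is a prime power the conclusion is immediate from cyclicity of $(\mbbZ/N)^{\times}$, but in general the Chinese Remainder Theorem a priori produces $2^{\omega(N)}$ solutions of $k^{2}\equiv 1\pmod N$, corresponding to independent sign flips on the primary components of $\mbbZ_N$. My strategy would be to exploit the sharper mod-$2N$ version of the congruence available in the $N$-even case, together with the explicit parametrization of $q(1)$ forced by non-degeneracy, in order to eliminate every mixed sign choice across the primary decomposition. Carrying this out carefully — and identifying whatever extra compatibility with the ribbon structure of $\mcC(\mbbZ_N,q)$ is needed beyond bare preservation of $q$ — is the delicate step, and where I expect the bulk of the authors' ``elementary'' argument to lie.
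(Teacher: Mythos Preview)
Your reduction to the congruence $(k^{2}-1)\,q(1)\in\mbbZ$ is fine and matches the paper in spirit, but the paper takes a different route at the decisive step: rather than working globally on $\mbbZ_N$, it applies the Chinese Remainder Theorem at the outset to factor $\mcC(\mbbZ_N,q)$ as a Deligne product over the prime-power parts of $N$ and thereby reduces to $N=p^{k}$. There the explicit forms $q_u(a)=ua^{2}/p^{k}$ (odd $p$) or $q_u(a)=ua^{2}/2^{k+1}$ ($p=2$) force $x^{2}\equiv 1$ modulo $p^{k}$, respectively $2^{k+1}$, and one reads off $x\equiv\pm 1$ modulo $p^{k}$ directly. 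A small correction to your sketch: your appeal to ``cyclicity of $(\mbbZ/N)^{\times}$'' fails at $p=2$, $k\geq 3$, where the unit group is not cyclic and $x^{2}\equiv 1\pmod{2^{k}}$ has four solutions; the paper's observation is that the denominator of $q$ is $2^{k+1}$ rather than $2^{k}$, and the four solutions of $x^{2}\equiv 1\pmod{2^{k+1}}$ collapse to $\pm 1$ modulo $2^{k}$.

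Your proposed global strategy of ``eliminating every mixed sign choice across the primary decomposition'' via a sharper congruence or extra ribbon compatibility is the genuine gap, and it cannot be repaired. For $N$ with at least two distinct prime factors the mixed signs are honest elements of $O(\mbbZ_N,q)$: take $N=15$, $q(a)=a^{2}/15$, and $k=4$; then $q(4a)=16a^{2}/15\equiv a^{2}/15\pmod{\mbbZ}$, so $\phi_{4}\neq\phi_{\pm 1}$ lies in $\Aut_{\ot}^{br}\mcC(\mbbZ_{15},q)$. There is no hidden ribbon constraint to invoke --- the identification with $O(A,q)$ from \cite{DN} already encodes the full braided (and ribbon) structure. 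The CRT reduction to prime powers is thus the essential move you are missing; what it really shows is that on each primary factor the autoequivalence is $\pm 1$, so globally $\Aut_{\ot}^{br}\mcC(\mbbZ_N,q)\cong(\mbbZ_2)^{s}$ with $s$ the number of prime divisors of $N$ and particle-hole the diagonal element. Read the proposition as the prime-power statement, which is what the paper proves and all that is used downstream.
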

\begin{proof} By the Chinese remainder theorem $\mcC(\mbbZ_N,q)$ factors as a product of cyclic modular categories of prime power dimension, so we may assume that $N=p^k$.  For $p\neq 2$ the two inequivalent quadratic forms $q:\mbbZ_{p^k}\rightarrow \mbbQ/\mbbZ$ are $q_u(a)=\frac{ua^2}{p^k}$ for $u=\pm 1$.  An automorphism of $\mbbZ_{p^k}$ is given by $\phi_x(a)=xa$ for some $x\in\mbbZ_{p^k}^*$.  So $q_u\circ \phi_x=q_u$ implies that $x^2\equiv 1\pmod{p^k}$, which has only $x=\pm 1$ as solutions.  For $p=2$ there are 4 inequivalent quadratic forms (see, e.g. \cite{GalindoJaramillo}): $q_u(a)=\frac{ua^2}{2^{k+1}}$ for $u\in\mbbZ_8^*$.  The same computation as above shows that $q_u\circ\phi_x=q_u$ only if $x^2\equiv 1\pmod{2^{k+1}}$ with $x\in\mbbZ_{2^k}^*$.  Although $x^2  \equiv\pmod{2^{k+1}}$ has 4 solutions, there are only two inequivalent solutions modulo $2^k$, namely $x=\pm 1$.
\end{proof}
 Furthermore, it can be shown that the obstructions vanish in this case, so the action $\rho:\mbbZ_2\rightarrow \Aut_{\ot}^{br}\mcC(\mbbZ_N,q)$ defined by $\rho(1)=\phi$ can be gauged.  

\section{Properties Determined by Dimension}\label{Section: Properties}
In this section we assume that $\mcC$ is a unitary braided fusion category, and summarize some results showing that several important properties of any anyon are detected by the dimension of the corresponding simple object.

Important examples of unitary braided fusion categories are obtained from quantum groups at roots of unity (see \cite{RowellSurvey} for a survey).  Here we briefly outline the construction, mainly for notational purposes.

\begin{enumerate}
\item Let $\mfg$ be a simple Lie algebra and $q=e^{\pi i/\ell}$ be a root of unity with $\ell>\check{h}$ (dual Coxeter number of $\mfg$) and $\ell\in 2\mbbZ$ for types $B,C,F$ and $\ell\in3\mbbZ$ for type $G$.
\item The representation category of quantum group $U_q\mfg$ is a non-semisimple ribbon category.  A quotient by the tensor ideal of negligible morphisms (essentially the radical of the trace) of this category gives a braided fusion category $\mcC(\mfg,\ell)$.
\item The integer $k=(\ell-\check{h})/m$ (where $m=2$ for types $B,C,F$ and $m=3$ for type $G$ and $m=1$ otherwise) is called the \textit{level}, and for classical types $A,B,C$ and $D$ we adopt the alternative notation for $\mcC(\mfg,\ell)$ is $G(N)_k$ for $G=SU,SO$ or $Sp$ of dimension $N$.
\end{enumerate}

The braid group on $n$ strands $\mcB_n$ generated by $\sigma_1,\ldots,\sigma_{n-1}$ satisfying
\begin{enumerate}
\item $\sigma_i\sigma_{i+1}\sigma_i=\sigma_{i+1}\sigma_i\sigma_{i+1}$ for $i\leq n-2$
\item $\sigma_{i}\sigma_j=\sigma_{j}\sigma_i$ for $|i-j|>1$
\end{enumerate} plays an important role in topological phases of matter: particle exchange induces a representation of $\B_n$ on the state space $\Hom(\one,X^{\ot n})$ for any object $X$.  More generally we obtain a homomorphism of $\mcB_n$ into $\Aut(X^{\ot n})$, which is often lifted to a homomorphism of the group algebra $\rho_X:\mbbC\mcB_n\rightarrow\End(X^{\ot n})$.  Moreover, $\mcH_{n}:=\bigoplus_i\Hom(X_i,X^{\ot n})$ is a faithful $\End(X^{\ot n})$-module and hence 
we will sometimes abuse notation and regard $\rho_X$ as a $\B_n$-representation on $\mcH_{n}$.

\subsection{Abelian Anyons}
We say that $X\in\mcC$ (or the corresponding anyon) is \textit{non-abelian} if the $\B_n$-representation $(\rho_X,\mcH_{n})$ has non-abelian image for all $n\geq 3$.  This is the first property of $X\in\mcC$ determined by dimension:
\begin{theorem}[\cite{RWJPA}]
If $X\in\mcC$ is simple and $\dim X > 1$ then $X$ is non-abelian. 
\end{theorem}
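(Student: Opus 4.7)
The plan is to reduce to $n=3$ and then derive a contradiction via the fusion-tree structure. For any $n\geq 3$, the first two generators $\sigma_1,\sigma_2$ of $\B_n$ generate a copy of $\B_3$, so a non-abelian image of $\rho_X$ on $\mcH_3$ automatically yields non-abelian image on $\mcH_n$ for every $n\geq 3$. I therefore focus on $\mcH_3 = \bigoplus_i \Hom(X_i, X^{\ot 3})$ and argue by contradiction, assuming $\rho_X(\sigma_1)$ and $\rho_X(\sigma_2)$ commute.

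A quick preliminary: commutativity combined with the braid relation $\sigma_1\sigma_2\sigma_1=\sigma_2\sigma_1\sigma_2$ rewrites both sides to $\sigma_1^2\sigma_2$ and $\sigma_1\sigma_2^2$ respectively, so cancelling the invertible element $\sigma_1\sigma_2$ yields $\rho_X(\sigma_1)=\rho_X(\sigma_2)$. To exploit this, decompose $X\ot X = \bigoplus_k N_{X,X}^k X_k$ and observe that $c_{X,X}$ acts on each $X_k$-isotypic component by a scalar $\beta_k$ (a root of $\theta_k/\theta_X^2$ pinned down by the ribbon structure). In the left-first fusion-tree basis of each channel $\Hom(X_i, X^{\ot 3})$, the operator $\rho_X(\sigma_1)$ is diagonal with eigenvalues $\beta_k$; in the right-first basis, $\rho_X(\sigma_2)$ is diagonal with the same multiset of eigenvalues; the change of basis is the associator $F^{X,X,X}_i$. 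Therefore $\rho_X(\sigma_1)=\rho_X(\sigma_2)$ forces $F^{X,X,X}_i$ to conjugate one diagonal matrix of the $\beta_k$'s to another, which (when the $\beta_k$ are distinct) means $F^{X,X,X}_i$ must be a permutation-plus-phase matrix in every channel $X_i$.

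The main obstacle is converting $\dim X > 1$ into a violation of this rigid form. By Remark \ref{formula Xotimesdual} applied to $X\ot X^*$, non-invertibility of $X$ forces $X\ot X^*$, and hence (by dualizing) $X\ot X$, to contain at least two distinct simple summands, so at least one channel $X_i$ carries a non-trivial $F^{X,X,X}_i$. In the generic case one produces a pair $X_k\neq X_{k'}$ with $\beta_k\neq\beta_{k'}$ coupled by a non-vanishing $F$-coefficient, giving the required contradiction. The genuinely delicate case is when all $\beta_k$ coincide (so $c_{X,X}$ is a single scalar on $X\ot X$ and the $n=3$ constraint becomes vacuous); here one must move to $n=4$ and combine the ribbon identity $\theta_{X\ot X}=c_{X,X}^2(\theta_X\ot\theta_X)$ with the pentagon/hexagon axioms to rule out a uniformly-scalar braiding under $\dim X > 1$. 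Handling this ``uniformly scalar'' degeneracy cleanly is the technical heart of the proof, and is the step where I expect the most work.
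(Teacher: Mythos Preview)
The paper does not prove this theorem; it is quoted from \cite{RWJPA} with no argument supplied. So there is no ``paper's own proof'' to compare against here. I will therefore assess your attempt on its own terms and indicate the missing idea.

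Your reduction is sound: showing the $\B_3$ image is non-abelian suffices, and from commutativity plus the braid relation you correctly deduce $\rho_X(\sigma_1)=\rho_X(\sigma_2)$, i.e.\ $c_{X,X}\otimes\Id_X=\Id_X\otimes c_{X,X}$ on $X^{\otimes 3}$. After that, the $F$-matrix analysis stalls. Three specific gaps: (i) the passage from ``$X\otimes X^*$ has at least two simple summands'' to ``$X\otimes X$ has at least two'' is not valid without self-duality; (ii) even granting multiple summands in $X\otimes X$, you give no reason why some channel must carry an off-diagonal $F$-entry coupling summands with \emph{distinct} $\beta_k$; (iii) you explicitly leave the scalar case (all $\beta_k$ equal) unfinished, while acknowledging it is the heart of the matter. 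In fact case (iii) is the only case, as the argument below shows.

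The missing idea is to take the partial categorical trace over the third tensor factor in the identity $c_{X,X}\otimes\Id_X=\Id_X\otimes c_{X,X}$. The left side traces to $\dim(X)\cdot c_{X,X}$, while the right side traces to $\theta_X^{\pm 1}\cdot\Id_{X\otimes X}$ by the standard ``kink'' identity in a ribbon category. Hence $c_{X,X}=\theta_X^{\pm1}\dim(X)^{-1}\cdot\Id_{X\otimes X}$ is a scalar. In a unitary category $c_{X,X}$ is unitary, so this scalar has modulus $1$; since $|\theta_X|=1$ this forces $\dim X=1$, the desired contradiction. This bypasses the $F$-matrix entirely and resolves precisely the ``uniformly scalar'' case you flagged as the crux.
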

It is clear that any invertible $Z\in\mcC$ is abelian: $\dim\End(Z^{\ot n})=1$ in this case so $\rho_Z(\B_n)$ acts by scalars.  Thus we have a complete characterization of (simple) abelian anyons as those corresponding to invertible objects.
A \textbf{boson} $b$ is a particular kind of abelian anyon, characterized by $b^{\otimes 2}\cong\one$ and $c_{b,b}=\Id_{b\otimes b}$ whereas a \textbf{fermion} $f$ has $f^{\otimes 2}\cong \one$ but $c_{f,f}=-\Id_{f\otimes f}$.  Alternatively, in a unitary ribbon category, an object $X$ with $X^{\ot 2}\cong\one$ is a boson if $\theta_X=1$ and a fermion if $\theta_X=-1$.
\subsection{Localizable Anyons}
The $\B_n$ representations $\rho_X$ are somewhat complicated--they exhibit a \textit{hidden locality} \cite{FKW}, but are not explicitly local--rather the action of $\rho_X(\sigma_i)$ act non-trivially on the entire space $\mcH_n$.  Explicitly local representations of $\B_n$ can be obtained from solutions $R\in\Aut(V^{\ot 2})$ to the Yang-Baxter equation on a vector space $V$:
\begin{equation}
    (R\ot Id_V)(Id_V\ot R)(R\ot Id_V)=(Id_V\ot R)(R\ot Id_V)(Id_V\ot R).
\end{equation}  Such a pair $(R,V)$ is called a \textit{braided vector space}.  From a braided vector space we obtain a representation $\rho^R$ of $\B_n$ on $V^{\ot n}$ via $$\sigma_i\rightarrow Id_V^{\ot (i-1)}\ot R\ot Id_V^{\ot (n-i-1)}.$$
We say that $X\in\mcC$ is localizable  if there is a  braided vector space $(R,V)$
and injective
algebra maps $\tau_n:\mbbC\rho_X(\B_n)\rightarrow \End(V^{\ot n})$ such that,
for all $n$, $\rho^R=\tau_n\circ \rho_X$.  That is, the following diagram commutes:
 $$\xymatrix{ \mbbC\B_n\ar[d]^{\rho_X}\ar[dr]^{\rho^R} \\
\mbbC\rho_X(\B_{n})\ar[r]^{\tau_n} & \End(V^{\ot n})}$$

\begin{example}
Consider $\mcI^\nu$ an Ising category (see Example \ref{ising ex}) and the object $\sigma$ of dimension $\sqrt{2}$.
For 
$R = \frac{1}{\sqrt{2}} \paren{\begin{smallmatrix}
      1 & 0 & 0 & 1  \\
      0 & 1 & 1 & 0 \\
      0 & -1 & 1 & 0 \\
      -1 & 0 & 0 & 1
    \end{smallmatrix}}$ the braided vector space $(R,\mbbC^2)$ provides a localization of $\sigma$ \cite{FRW}.
    
\end{example}

Two slightly less restrictive notions of localizability are studied in \cite{GHR}, namely $(k,m)$-generalized localizations and quasi-localizations.  Under some assumptions, localizability is known to be determined by dimension:
\begin{theorem}[\cite{RW_localization,GHR}] Let $X\in\mcC$ be a simple object.
If $\rho_X(\mathbb{C} \B_n)=\End(X^{\otimes n})$ for $n\geq 2$ and $X$ is (generalized or quasi-)localizable then $\dim (X)^2\in \mathbb Z$.
\end{theorem}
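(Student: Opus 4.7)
The plan is to exploit the algebra embedding furnished by the localization, together with Perron--Frobenius asymptotics, to force $\dim(X)^2$ into $\mbbZ$. The hypothesis $\rho_X(\mbbC\B_n) = \End(X^{\ot n})$ combined with the commuting triangle $\tau_n \circ \rho_X = \rho^R$ yields, for each $n \geq 2$, a unital embedding of finite-dimensional semisimple $\mbbC$-algebras
$$\iota_n \colon \End(X^{\ot n}) \hookrightarrow \End(V^{\ot n}).$$
Decomposing via Artin--Wedderburn as $\End(X^{\ot n}) \cong \bigoplus_i \mathrm{Mat}_{N_{X,n}(i)}(\mbbC)$ with $N_{X,n}(i) := \dim\Hom(X_i, X^{\ot n})$, any such embedding is classified (up to conjugation) by non-negative integer multiplicities $m_{n,i}$ satisfying the balance equation $(\dim V)^n = \sum_i m_{n,i}\,N_{X,n}(i)$, and in particular gives the crude dimension bound
$$\sum_i N_{X,n}(i)^2 \;=\; \dim \End(X^{\ot n}) \;\leq\; (\dim V)^{2n}.$$

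Next I would apply the Perron--Frobenius theorem to the fusion matrix $N_X$. For each simple $X_i$ appearing in sufficiently high tensor powers of $X$, one obtains an asymptotic $N_{X,n}(i) \sim \alpha_i \dim(X)^n$, where $\alpha_i$ is a positive algebraic number proportional to $\dim(X_i)$ (more precisely $\alpha_i = \dim(X_i)/\dim(\mathrm{FPdim\ of\ the\ fusion\ subcategory\ generated\ by\ }X)$). Consequently $\sum_i N_{X,n}(i)^2 \sim D \cdot \dim(X)^{2n}$ for an explicit positive constant $D$, and the embedding yields the coarse inequality $\dim(X) \leq \dim V \in \mbbZ_{>0}$.

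The crux---and main obstacle---is upgrading this growth-rate inequality to the exact integrality assertion $\dim(X)^2 \in \mbbZ$. The strategy is to use the infinite family of integer identities $(\dim V)^n = \sum_i m_{n,i}\, N_{X,n}(i)$ simultaneously: after dividing by $\dim(X)^n$ and letting $n \to \infty$, the real number $(\dim V/\dim X)^n$ must agree up to an $o(1)$ correction with a non-negative integer combination of the fixed algebraic numbers $\alpha_i$. Since $\dim(X)$ is itself a positive algebraic integer (being the Perron eigenvalue of a non-negative integer matrix), the aim is to conclude that every Galois automorphism of its minimal polynomial fixes $\dim(X)^2$, so $\dim(X)^2 \in \mbbQ$ and hence $\dim(X)^2 \in \mbbZ$. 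The technical difficulty lies in controlling the integer multiplicities $m_{n,i}$ rigidly enough to extract this Galois invariance---this is the step that genuinely uses the braid structure and not just the dimension bound. The generalized and quasi-localization variants of \cite{GHR} should yield to the same approach, since the underlying dimension inequalities persist up to subexponential corrections.
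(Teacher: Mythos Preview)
The paper does not give its own proof of this theorem: it is quoted from \cite{RW_localization,GHR} and immediately followed by Conjecture~\ref{Conjecture_Localizable}. So there is no in-paper argument to compare against, and your proposal must be judged on its own.

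Your setup is correct as far as it goes: the injections $\tau_n$ do yield unital embeddings $\End(X^{\ot n})\hookrightarrow \End(V^{\ot n})$, the multiplicity identity $(\dim V)^n=\sum_i m_{n,i}N_{X,n}(i)$ holds, and Perron--Frobenius gives the coarse bound $\dim(X)\le\dim V$. But your ``crux'' paragraph is not a proof --- you yourself flag it as the main obstacle and then gesture at a Galois argument without supplying one. From the pointwise identities $(\dim V)^n=\sum_i m_{n,i}N_{X,n}(i)$ with \emph{unrelated} integer vectors $(m_{n,i})_n$ there is no evident mechanism to force $\dim(X)^2\in\mbbZ$; one can typically solve such a single Diophantine equation for each $n$ regardless of whether $\dim(X)^2$ is an integer.

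The idea you are missing is the \emph{compatibility of the $\tau_n$ with the tower structure}. Because $\rho^R(\sigma_i)$ on $V^{\ot(n+1)}$ is literally $\rho^R(\sigma_i)$ on $V^{\ot n}$ tensored with $\Id_V$, and because $\rho_X(\mbbC\B_n)=\End(X^{\ot n})$ by hypothesis, the diagram forces $\tau_{n+1}|_{\End(X^{\ot n})}=\tau_n\ot\Id_V$. In Bratteli-diagram terms this is a recursion $N_X^{T}\,v^{(n+1)}=(\dim V)\,v^{(n)}$ on the multiplicity vectors, not merely an independent balance equation for each $n$. It is this recursion---equivalently, the fact that one has an embedding of \emph{towers} of multi-matrix algebras, not just of individual algebras---that the arguments in \cite{RW_localization,GHR} exploit (via Perron--Frobenius eigenvectors of the inclusion matrices, relative commutants, and Jones-index style constraints) to conclude that $(\dim V)^2/\dim(X)^2$ is itself an algebraic integer of the right shape, and hence that $\dim(X)^2\in\mbbZ$. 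Your write-up never invokes this tower compatibility, and without it the integrality step does not go through.
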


It is believed that this relationship holds more generally:
\begin{conjecture}[\cite{RW_localization,GHR}]\label{Conjecture_Localizable}
Any simple $X\in\mcC$ is (generalized or quasi-)localizable if, and only if $\dim(X)^2\in\mbbZ$.
\end{conjecture}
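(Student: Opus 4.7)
My plan is to attack the two directions of the conjecture separately, building on the preceding theorem and the structural classification of weakly integral modular categories.

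For the forward implication (localizable implies $\dim(X)^2 \in \mbbZ$), the plan is to remove the hypothesis $\rho_X(\mbbC\B_n)=\End(X^{\ot n})$ from the preceding theorem. Even without surjectivity, the commutative diagram that defines a localization forces the tower of algebras $\mbbC\rho_X(\B_n)$ to embed into a single tower $\End(V^{\ot n})$ whose $\mbbC$-dimension grows as $(\dim V)^{2n}$. On the other hand, each isotypic component of $X^{\ot n}$ contributes a matrix block of rank $\dim\Hom(X_i,X^{\ot n})$, and the sum of the squares of these multiplicities grows like $\FPdim(X)^{2n}$ by the Perron--Frobenius analysis reviewed in Section~\ref{Section: Preliminaries}. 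Comparing these two growth rates should force $\FPdim(X)^2$ to be a rational integer. For the quasi- and generalized settings the target algebra is only an amplified $M_{k(n)}(\End(V^{\ot n}))$, so the growth comparison must additionally control $k(n)$; I would expect that a minimality argument on the localization pins $k(n)$ down to subexponential behavior, after which the same integrality conclusion follows.

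For the reverse implication (integrality of $\dim(X)^2$ implies localizable), my strategy is to reduce to the classification. By the GN-grading theorem, the ribbon fusion subcategory $\langle X\rangle$ generated by $X$ is weakly integral, and after taking a modular closure one may work inside a weakly integral modular category. The engine is then the structural principle, exhibited throughout this paper, that all known weakly integral modular categories arise as $G$-gaugings of pointed modular categories $\mcC(A,q)$ together with (generalized) Tambara--Yamagami and Ising constituents; in particular the metaplectic case classified in Section~\ref{Subsection: Metaplectic} exhausts a large slice of the possibilities. For a pointed modular category a diagonal R-matrix built from the quadratic form $q$ gives a tautological localization on $\mbbC^{|A|}$. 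The plan is to lift such R-matrices through a $\mbbZ_2$-gauging by using the equivariantization--de-equivariantization correspondence: an action of a finite group on a localized category should transfer the R-matrix to $V\ot \mbbC^{|G|}$, after twisting by the associated 2-cocycle data, giving a localization of the gauged object.

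The hardest step will be this explicit construction of braided vector spaces in the gauged setting. Even for the Ising categories the localization required an explicit ansatz \cite{FRW}, and for general metaplectic categories the image $\rho_X(\mbbC\B_n)$ is a quotient of a BMW-type algebra that has no uniform quantum-group model once the underlying metric group is not cyclic of prime-power order. A secondary difficulty is that the obstructions to $\mbbZ_2$-gauging, although vanishing for $\mcC(\mbbZ_N,q)$, produce associator data that interact nontrivially with the localization condition, so a naive tensoring of R-matrices need not satisfy the Yang--Baxter equation. My initial concrete target would therefore be the $\mbbZ_2$-gauging of $\mcC(\mbbZ_5,q)$ (the smallest non-Ising metaplectic case), where the modular data is explicit, in order to identify the correct cocycle-twisted R-matrix; once this template is established, I would attempt to extend it uniformly over the family classified in the main theorem of this paper.
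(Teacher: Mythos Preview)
The statement you are trying to prove is labeled in the paper as a \emph{Conjecture}, not a theorem; the paper offers no proof and explicitly treats it as open (noting elsewhere that even the metaplectic case of Conjectures \ref{Property_F_conjecture} and \ref{Conjecture_Localizable} is an ``important open problem'' with only partial results in \cite{RWe1}). So there is no proof in the paper to compare against, and your proposal is an attack on a genuinely open problem.

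Both directions of your sketch have real gaps. For the forward implication, the growth comparison you propose only works under the surjectivity hypothesis $\rho_X(\mbbC\B_n)=\End(X^{\ot n})$ that you are precisely trying to remove: without surjectivity, $\mbbC\rho_X(\B_n)$ need not contain full matrix blocks of rank $\dim\Hom(X_i,X^{\ot n})$, so its $\mbbC$-dimension need not grow like $\FPdim(X)^{2n}$, and comparing with $(\dim V)^{2n}$ yields no constraint on $\FPdim(X)$. For the reverse implication, your reduction rests on the assertion that every weakly integral modular category arises as a gauging of a pointed category; this is essentially the weakly-group-theoretical conjecture and is itself open. Even granting it, you yourself acknowledge that the key step---transporting an explicit Yang--Baxter operator through a $G$-gauging---is not carried out, and that the known localizations (Ising in \cite{FRW}, metaplectic in \cite{GR}) were obtained by ad hoc constructions rather than by any functorial mechanism. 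What you have written is a reasonable research program, but not a proof.
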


The Gaussian Yang-Baxter operators described in \cite{GR} provide localizations of the generating (fundamental spinor) objects in $SO(N)_2$.

\subsection{Universal Anyons}

A given topological model for quantum computation is called (braiding-only) \textit{universal} if any unitary operator can be efficiently approximated up to a phase by braiding anyons \cite{FLW}.  We say that $X\in\mcC$ is \textit{braiding universal} if $\rho_X(\B_n)$ is dense in $SU(W)$ for each irreducible subrepresentation $W$ of $\mcH_n$.  The first step to verify universality is to check that $\rho_X(\B_n)$ is infinite, and very often (see \cite{FLW,LRW}) this is sufficient.  It is therefore of the utmost importance to determine when $\rho_X(\B_n)$ is finite.
\begin{definition}\label{Property_F_definition}
An object $X\in\mcC$ has \textit{property F} if the image $\rho_X(\B_n)$ is finite. 

A braided fusion category $\mcC$ has \textit{property F} if the associated braid
group representations on the centralizer algebras $\End_{\mcC}(X^{\otimes n}
)$ have finite image for
all $n$ and all objects $X$.
\end{definition}

\begin{conjecture}\cite{NR1}\label{Property_F_conjecture}
A braided fusion category $\mcC$ has property F if, and only if,  $\FPdim(\mcC)\in \mathbb Z$ (i.e. $\mcC$ is weakly integral).
\end{conjecture}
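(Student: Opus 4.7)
The plan is to attack the two directions of Conjecture \ref{Property_F_conjecture} separately, since they are of very different flavor and difficulty.

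For the easier direction, that $\mcC$ having property F implies $\FPdim(\mcC)\in\mbbZ$, I would argue contrapositively: suppose $\mcC$ is not weakly integral and fix a simple $X$ with $\dim(X)^2\notin\mbbZ$. The asymptotic growth of $\dim_\mbbC\End(X^{\otimes n})$ is governed by $\dim(X)^{2n}$ via the Perron--Frobenius eigenvalue of $N_X N_{X^*}$. If $\rho_X(\B_n)$ were finite for every $n$, then each image $\mbbC\rho_X(\B_n)\subseteq\End(X^{\otimes n})$ would be a direct sum of matrix algebras arising from a finite group quotient, so its dimensions would satisfy tight integer character-theoretic constraints. Combining this with the Markov trace on $\B_n$ coming from the categorical trace of $\mcC$ (which produces $\dim(X)^n$ upon closing off braids) should force $\dim(X)^2$ to lie in $\mbbQ$; since categorical dimensions are algebraic integers, this contradicts $\dim(X)^2\notin\mbbZ$.

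For the harder direction, that $\FPdim(\mcC)\in\mbbZ$ implies property F, the plan is a reduction-by-structure strategy. First apply the GN-grading to write $\mcC=\bigoplus_{x\in E}\mcC(n_x)$ with trivial component $\mcC_\intcat$ integral; since $\mcH_n$ is compatibly $E$-graded and each $\mcC(n_x)$ is a module category over $\mcC_\intcat$, property F for $\mcC$ should reduce to property F for $\mcC_\intcat$ plus a finite $E$-gauging correction. For the integral case, the canonical route is via the ENO conjecture that every integral modular category is Morita equivalent to a group-theoretical one, combined with the theorem of Etingof--Rowell--Witherspoon that group-theoretical fusion categories have property F. The partial results of \cite{NR1} settle several explicit families and serve as consistency checks.

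The main obstacle is clearly the forward direction, because the Morita/group-theoretical reduction rests on a conjecture that is itself open in general. A promising intermediate target that sidesteps this dependence is to prove directly that $G$-gauging preserves property F. In the strictly weakly integral setting addressed in this paper, metaplectic categories are $\mbbZ_2$-gaugings of metric groups by the Theorem above; pointed categories trivially have property F since their braid representations factor through finite groups, so establishing such a stability-under-gauging theorem — even only for cyclic groups of prime order — would settle the conjecture for metaplectic categories at once and provide a template for the general weakly integral case. A secondary obstacle in the converse direction is the need to control the Markov trace for non-unitary categorifications, but assuming unitarity (as stated at the start of Section \ref{Section: Properties}) the categorical trace is automatically positive definite and the argument should proceed as sketched.
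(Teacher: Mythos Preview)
This statement is a \emph{conjecture}, and the paper does not prove it; it records it as an open problem from \cite{NR1} and lists partial evidence (items (1)--(3) following the conjecture). So there is no proof in the paper to compare against, and your proposal should be read as a research plan rather than a proof.

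As a plan, both directions have genuine gaps.

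For the direction you call ``easier'' (not weakly integral $\Rightarrow$ not property F), the step ``combining this with the Markov trace \ldots\ should force $\dim(X)^2$ to lie in $\mbbQ$'' is exactly the missing content. Finiteness of $\rho_X(\B_n)$ only tells you that the image algebra is a sum of matrix algebras over $\mbbC$ whose dimensions are integers; it says nothing about the \emph{values} of the categorical trace on that algebra, which live in a cyclotomic field regardless. The Markov trace evaluated on a closed braid gives $\dim(X)^n$ times a link invariant, but the link invariant itself is typically an algebraic number in the same field, so no rationality of $\dim(X)^2$ drops out. The known evidence for this direction (item (3) in the paper) is obtained by explicit density computations for specific quantum group families, not by any trace argument of the type you sketch. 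There is at present no general mechanism converting finiteness of all $\rho_X(\B_n)$ into arithmetic constraints on $\dim(X)$.

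For the other direction, the GN-grading reduction is also not established. The $E$-grading decomposes objects, but $\End(X^{\otimes n})$ for $X$ in a nontrivial component is not built in any evident way from braid representations internal to $\mcC_{\intcat}$, so ``property F for $\mcC_{\intcat}$ plus a finite $E$-gauging correction'' is an assertion, not an argument. Your fallback --- proving that $G$-gauging preserves property F --- is a sensible research target, and indeed the paper flags exactly this problem for metaplectic categories as open (``an important open problem is to prove Conjectures \ref{Property_F_conjecture} and \ref{Conjecture_Localizable} for metaplectic anyons''), with only partial results in \cite{RWe1}. So even the special case you propose as a template is itself unresolved.
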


Significant progress has been made towards proving this conjecture:
\begin{enumerate}
\item Every $X\in \Rep(D^{\omega}(G))$ has property F \cite{ERW_prop_F_gt}.
\item Every weakly integral quantum group category has property F, including $SU(2)_2$, $SU(2)_4$, $SU(3)_3$ and $SO(N)_2$ (see \cite{FLW,LRW,LR,RWe1}).
\item The standard tensor generators of every non-weakly integral quantum group category do not have property F.  (see \cite{FRW,LRW,RowellRUMA}).
\end{enumerate}

From the discussion above, it is clear that weakly integral braided fusion categories (conjecturally) have interesting properties.  The main examples we have of such categories are metaplectic categories, which are closely related to $SO(N)_2$.  We will explore these categories in some detail in the next sections.

\section{Metaplectic modular categories}

\begin{defn}
    A \textbf{metaplectic modular category (of dimension $4N$)} is a modular category
    $\mcC$ with positive dimensions that is Grothendieck equivalent to $SO(N)_2$, for some integer $N  \geq 2$.
  \end{defn}
  
 These fusion rules of metaplectic categories differ in important ways depending on the value of $N\pmod{4}$.   Metaplectic categories for $N$ odd were defined in \cite{HNW} and studied in \cite{ACRW1}, while the case $N\equiv 2\pmod{4}$ can be found in \cite{BPR} where the term ``even metaplectic" is used to describe metaplectic modular categories of dimension $4N$ with $N$ even.  We will simplify the terminology and call them all metaplectic (of dimension $4N$), and specify the value of $N\pmod{4}$ when necessary.

The ubiquity of metaplectic categories among weakly integral modular categories as well as their application to topological phases of matter motivate their classification.  The complete classification of weakly integral modular categories of dimension $2^n m$, for $n\in \{0, 1, 2, 3\}$ with $m$ a square-free odd integer has been given, employing the classification of metaplectic modular categories of dimension $4N$ with $N$ odd or $N\equiv 2\pmod{4}$ \cite{BGNPRW1,BPR}.  In \cite{DN} a general description of modular categories of dimension $p^nm$ with $m$ square-free is given in terms of de-equivariantizations. Here we consider metaplectic modular categories of dimension $4N$, recalling the known results for $4\nmid N$ with new results in the case $4\mid N$.

\subsection{Metaplectic modular categories of dimension $4N$ with $4\nmid N$}\label{Subsection: Metaplectic}

The metaplectic modular categories of dimension $4N$ with $4\nmid N$ have been studied in \cite{ACRW1,BPR}.  Most of the fusion rules for such a category were given in \cite{NR1} with more complete details in \cite{HNW,BPR}.  For $N$ odd there are $2$ invertible objects, $\frac{N-1}{2}$ simple objects of dimension $2$ and $2$ simple objects of dimension $\sqrt{N}$.  All simple objects are self-dual in this case.  In the case $N \equiv 2\pmod{4}$ we have $4$ invertible objects (including one pair of non-self-dual objects), $4$ (non-self-dual) simple objects of dimension $\sqrt{N/2}$, and $\frac{N}{2}-1$ simple objects of dimension $2$ (all of which are self-dual).  

We have a complete classification of these categories:

\begin{theorem}\cite{ACRW1,BPR} \label{Theorem: Metaplectic main result} If $\mcC$ is a metaplectic category of dimension $4N$, with $4\nmid N$, then $\mcC$ is a gauging of the particle-hole symmetry of a
    $\mbbZ_{N}$-cyclic modular category $\mcC(\mbbZ_N,q)$. Moreover, for $N = p_1^{k_1} \cdots
    p_r^{k_{r}}$, with $p_i$ distinct primes (where if $p_1=2,k_1=1$), there are exactly $2^{r+1}$
    many inequivalent such metaplectic modular categories.
    
  \end{theorem}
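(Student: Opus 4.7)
The approach is to unify the two cases $N$ odd \cite{ACRW1} and $N \equiv 2 \pmod 4$ \cite{BPR} under a common gauging-style strategy: find a canonical Tannakian $\Rep(\mbbZ_2) \subset \mcC$, de-equivariantize to extract a pointed cyclic modular category, and invoke the uniqueness of the particle-hole symmetry to present $\mcC$ as a $\mbbZ_2$-gauging.

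First I would identify the relevant Tannakian subcategory using the fusion-rule data recalled above. For $N$ odd the group $G(\mcC)$ of invertibles is $\mbbZ_2$; applying the balancing relation \eqref{sformula} to the dimension-$\sqrt{N}$ simples forces the nontrivial invertible to have twist $+1$, hence to be a boson, giving a Tannakian $\mcT := \Rep(\mbbZ_2) \subset \mcC$. For $N \equiv 2 \pmod 4$ the non-self-dual pair forces $G(\mcC) \cong \mbbZ_4$, and the balancing relation applied to the dimension-$\sqrt{N/2}$ simples singles out the unique order-$2$ element as a boson, again giving such a $\mcT$.

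Next I would de-equivariantize by $\mcT$. By \cite{DMNO1}, $\mcC_{\mbbZ_2}$ is a $\mbbZ_2$-crossed braided fusion category whose trivial component $\mcB := [\mcC_{\mbbZ_2}]_e$ is modular of dimension $4N/4 = N$. A dimension count together with a matching of $\mbbZ_2$-orbits on $G(\mcB)$ against the dimension-$2$ simples of $\mcC$ (each such simple arising either from splitting a fixed invertible or from fusing an orbit of size $2$) forces $\mcB$ to be pointed with cyclic group of invertibles, so $\mcB \cong \mcC(\mbbZ_N, q)$ for some non-degenerate $q$. By the proposition above, the only nontrivial element of $\Aut_\otimes^{br}\mcC(\mbbZ_N,q)$ is the particle-hole symmetry, so the $\mbbZ_2$-crossed structure must be an extension by this action, and re-equivariantizing exhibits $\mcC \cong (\mcC_{\mbbZ_2})^{\mbbZ_2}$ as a $\mbbZ_2$-gauging of $\mcC(\mbbZ_N,q)$.

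Finally, to enumerate, the Chinese remainder theorem factors $\mcC(\mbbZ_N,q)$ as a Deligne product of prime-power cyclic modular categories, and the proposition's classification of non-degenerate quadratic forms on each factor (using the hypothesis $k_1=1$ when $p_1=2$) yields two inequivalent choices per prime, hence $2^r$ pointed inputs. For each such input the obstructions to gauging the particle-hole action vanish, and the inequivalent gaugings form a torsor of size two (a standard cohomological computation carried out explicitly in \cite{ACRW1, BPR}), giving $2^{r+1}$ metaplectic categories in total. The main obstacle is the identification of $\mcB$ as pointed cyclic: while the dimension is immediate, ruling out non-invertible simples and establishing cyclicity of $G(\mcB)$ requires the detailed fusion-rule analysis that forms the technical heart of \cite{ACRW1, BPR}, and the two cases $N$ odd and $N \equiv 2 \pmod 4$ diverge just enough in their invertible structure that they must be carried out separately.
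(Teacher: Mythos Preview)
The paper does not give its own proof of this theorem; it is stated as a citation of \cite{ACRW1,BPR}, with the paper's original contribution being the parallel result Theorem~\ref{thm:particleHoleAnalog} for $4\mid N$. Your outline is faithful to the strategy in those references and to the paper's argument in the $4\mid N$ case: locate a Tannakian $\Rep(\mbbZ_2)$ generated by a boson, de-equivariantize, identify the trivial component as a pointed cyclic modular category $\mcC(\mbbZ_N,q)$, invoke uniqueness of the particle-hole symmetry, and count by multiplying the $2^r$ choices of $q$ by the factor of $2$ from $H^3(\mbbZ_2,U(1))$.

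One small imprecision: for $N$ odd you locate the boson by applying balancing to the dimension-$\sqrt{N}$ simples, but in fact the nontrivial invertible $z$ \emph{swaps} the two $\sqrt{N}$-dimensional objects rather than fixing them, so balancing there does not directly read off $\theta_z$. The argument in \cite{ACRW1} (and mirrored in the paper's Lemma~\ref{even_boson} for $4\mid N$) instead uses that $z\in\mcC_{pt}=\mcC_{ad}'$ fixes the $2$-dimensional simples $X_i$; then \cite[Lemma 5.4]{M4} (a transparent self-dual invertible fixing a simple object cannot be a fermion) forces $\theta_z=1$. For $N\equiv 2\pmod 4$ your identification of the order-$2$ element of $G(\mcC)\cong\mbbZ_4$ as the boson is correct and is exactly the route taken in \cite{BPR}. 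You are also right to flag the cyclicity of $\mcB$ as the technical crux; in both references and in the proof of Theorem~\ref{thm:particleHoleAnalog} this is done by explicitly tracking a generator (e.g.\ $Y_0^{(1)}$) through the de-equivariantization functor and showing its powers exhaust the invertibles, rather than by an abstract orbit count.
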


\subsection{Metaplectic categories of dimension $4N$, with $4\mid N$}
  \label{Subsection: Even Metaplectic Classification for N even}

 The modular category $SO(N)_2$ with $N\equiv 0\pmod{4}$ corresponds to Lie type $D_k$ with $2k=N$, has
  rank $k+7$ and dimension $4N$ \cite{NR1}.  The simple objects have dimension $1,2$ and $\sqrt{k}$.
  
The simple objects of $SO(N)_2$ are naturally labeled by $\mathfrak{so}_{N}$ weights $\mu$ with $\mu^1+\mu^2\leq 2$, i.e. the sum of the first two coordinates can be at most $2$.  We will provide them with less cumbersome labels, after identifying their weights in terms of the fundamental weights $\l_j= (1, \ldots, 1, 0, \ldots 0)$ with $j$ 1s for $j \leq k-2$, $\l_{k-1} = \frac{1}{2}(1, \ldots,1, -1)$, and $ \l_{k} = \frac{1}{2}(1, \ldots, 1)$.  Setting $r=\frac{k}{2}-1$, we have simple objects as follows:
\begin{itemize}
    \item  objects with weights $\mathbf{0},2\l_{1},2\l_{k-1},2\l_{k}$ will be denoted $\one,fg,f,g$, 
    \item  simple objects with weights $\l_2,\ldots,\l_{k-2}$ will be denoted $X_0,\ldots,X_{r-1}$,
    \item simple objects with weights $\l_1,\l_3,\ldots,\l_{k-3},\l_{k-1}+\l_{k}$ will be denoted $Y_0,\ldots,Y_r$,
    \item simple objects with weights $\l_{k-1},\l_1+\l_k$ will be denoted $V_1,V_2$, and
    \item simple objects with weights $\l_{k},\l_1+\l_{k-1}$ will be denoted $W_1,W_2$.
\end{itemize}

 All simple objects are self-dual, and the $X_i$ and $Y_i$ have dimension $2$, while $V_i,W_i$ have dimension $\sqrt{k}$.  
 For $k>2$ the key fusion rules are as follows, where we abuse notation and write $=$ for $\cong$:

  \begin{itemize}
  \item $f^{\ot 2}=g^{\ot 2}=\one$, $f\ot X_i=g\ot X_i=X_{r-i-1}$ and $f\ot Y_i=g\ot Y_i=Y_{r-i}$
  \item $g\ot V_1=V_2, f\ot V_1=V_1$ and $f\ot W_1=W_2, g\ot W_1=W_1$
  \item $V_1^{\ot 2}=\one\oplus f\oplus\bigoplus_{i=0}^{r-1} X_i$
  \item $W_1^{\ot 2}=\one\oplus g\oplus\bigoplus_{i=0}^{r-1} X_i$
  \item $W_1\ot V_1=\bigoplus_{i=0}^r Y_i$
  \item $X_i\ot X_j=\begin{cases} X_{i+j+1}\oplus X_{j-i-1} & i<j\leq\frac{r-1}{2}\\ \one\oplus fg\oplus X_{2i+1} & i=j<\frac{r-1}{2}\\ \one \oplus f\oplus g\oplus fg & i=j=\frac{r-1}{2}<r-1\end{cases}$
  \item $Y_i\ot Y_j=\begin{cases} X_{i+j}\oplus X_{j-i-1} & i<j\leq\frac{r}{2}\\ \one\oplus fg\oplus X_{2i} & i=j\leq\frac{r-1}{2}\\ \one\oplus f\oplus g\oplus fg &i=j=\frac{r}{2}.\end{cases}$
 
  \end{itemize}

Notice that all other fusion rules may be derived from the above by tensoring with $f$ or $g$ as needed.  For example $V_1\ot V_2=g\ot V_1^{\ot 2}=f\oplus fg\oplus\bigoplus_{i=0}^{r-1}X_i$.

For $k=2$, i.e. $SO(4)_2$ we have $9$ simple objects: $\one,f,g,fg,Y_0,V_1,V_2,W_1,W_2$ and the fusion rules are the same as $SU(2)_2\boxtimes SU(2)_2$.  The applicable fusion rules above still hold.  Indeed, by \cite[Corollary B.12]{DGNO1} any such category is equivalent to a Deligne product of Ising-type modular categories (of which there are 8).  

Recall that a metaplectic category of dimension $4N$ with $4\mid N$ is any modular category $\mcC$ with the same fusion rules as above.  Fix such a category $\mcC$ and label the simple objects as above. The objects $\one,f,g$ and $fg$ are invertible, and their isomorphism classes form the group $\mbbZ_2\times\mbbZ_2$ under tensor product.  In particular the universal grading group $\mcU(\mcC)\cong\mbbZ_2\times\mbbZ_2$ with graded components labeled as follows:
\begin{enumerate}
    \item The adjoint category $\mcC_0$: with simple objects $\one,fg,f,g$ and $X_0,\ldots,X_{r-1}$
    \item $\mcC_{(1,1)}$ with simple objects $Y_0,\ldots,Y_r$
    \item $\mcC_{(1,0)}$ with simple objects $V_1,V_2$
     \item $\mcC_{(0,1)}$ with simple objects $W_1,W_2$.
\end{enumerate}
 \begin{lemma}
    \label{even_boson}
    Let $\mcC$ be a metaplectic modular category of dimension $4N$ with $4\mid N$, with simple objects labeled as above.  Then:
    \begin{enumerate}
        \item $fg$ centralizes every object in $\mcC_0\oplus \mcC_{(1,1)}$.
        \item $fg$ is a boson, i.e. $\theta_{fg}=1$.
    \end{enumerate}
  \end{lemma}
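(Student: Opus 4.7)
The plan is to establish (1) for $\mcC_0$ by general theory, then use a specific object in $\mcC_0$ to derive (2), and finally apply (2) to deduce (1) for $\mcC_{(1,1)}$. First I would show that $fg$ centralizes every simple object of $\mcC_0$ using only the universal grading. Since $\mcC$ is modular with trivial graded component $\mcC_0 = \mcC_{\ad}$, the excerpt recalls $\mcC_{\ad}' = \mcC_{pt}$ from \cite{GN2}, i.e.\ $\mcC_{pt}$ is precisely the M\"uger centralizer of $\mcC_{\ad}$. Because the centralization condition $S_{X,Y} = d_X d_Y$ is symmetric in $X$ and $Y$, the element $fg\in\mcC_{pt}$ centralizes every simple object of $\mcC_0$.

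Next, to prove $\theta_{fg} = 1$, I would pick a simple $X\in\mcC_0$ with $fg\otimes X\cong X$ and exploit the previous step. For $k\geq 4$ the object $X_0$ exists, and applying $f$ and then $g$ using $f\otimes X_i = g\otimes X_i = X_{r-i-1}$ gives $fg\otimes X_0 \cong X_0$. Since $fg$ centralizes $X_0$ and $fg\otimes X_0$ is simple, the monodromy $c_{X_0,fg}\circ c_{fg,X_0}$ is the identity endomorphism of $fg\otimes X_0$. Substituting into the twist axiom
\[
\theta_{fg\otimes X_0} = c_{X_0,fg}\circ c_{fg,X_0}\circ(\theta_{fg}\otimes\theta_{X_0})
\]
and using $\theta_{fg\otimes X_0} = \theta_{X_0}$ yields $\theta_{X_0} = \theta_{fg}\theta_{X_0}$, so $\theta_{fg} = 1$. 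For $k = 2$ no $X_0$ exists, but the excerpt observes that any such $\mcC$ is a Deligne product of two Ising-type modular categories in which $\psi$ is a fermion; thus $f$ and $g$ correspond to $\psi\boxtimes\one$ and $\one\boxtimes\psi$, giving $\theta_{fg} = \theta_\psi^2 = 1$.

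Finally, for (1) restricted to $\mcC_{(1,1)}$, the fusion rule $f\otimes Y_i = g\otimes Y_i = Y_{r-i}$ gives $fg\otimes Y_i\cong Y_i$. The same twist-axiom computation now reads $c_{Y_i,fg}\circ c_{fg,Y_i} = \theta_{fg}^{-1}\Id = \Id$ by (2), so $fg$ centralizes $Y_i$ for each $i$. The main obstacle is logistical rather than computational: the two parts of the lemma must be interleaved (first (1) for $\mcC_0$, then (2), then (1) for $\mcC_{(1,1)}$), and the edge case $k = 2$ must be handled separately since the $X_i$-family is then empty.
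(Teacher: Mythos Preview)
Your proof is correct and follows essentially the same strategy as the paper: use $\mcC_{ad}'=\mcC_{pt}$ to get centralization on $\mcC_0$, then exploit an object fixed by $fg$ (namely $X_0$ for $k\geq 4$) together with the twist axiom (equivalently, the balancing equation) to force $\theta_{fg}=1$, and finally run the same computation in reverse on the $Y_i$. Your treatment of the $k=2$ case is in fact slightly sharper than the paper's: rather than invoking a labeling ambiguity, you observe that the fusion rules $f\otimes V_1\cong V_1$, $g\otimes W_1\cong W_1$ pin down $fg$ as $\psi\boxtimes\psi$, whence $\theta_{fg}=\theta_\psi^2=1$.
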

  \begin{proof}

 Observe that since the pointed subcategory $\mcC_{pt}$ with simple objects $\one,f,g,fg$ is a subcategory of $\mcC_{ad}=\mcC_0$ and $\mcC_{pt}=\mcC_{ad}^\prime$ we see that $\mcC_{pt}$ is symmetric, i.e. self-centralizing. Moreover,  since $fg\in\mcC_{pt}=\mcC_{ad}^\prime$ it is clear that $fg$ centralizes $\mcC_0$.  For $Y_i\in\mcC_{(1,1)}$ we have $fg\ot Y_i=Y_i$ and the balancing equation gives: $$\theta_{Y_i}\theta_{fg}S_{Y_i,fg}=\theta_{Y_i}\dim(Y_i)$$ so that $S_{Y_i,fg}=\dim(fg)\dim(Y_i)$ if and only if $\theta_{fg}=1$.  If $k>2$ (so $r>0$) there is a 2-dimensional object $X_0\in\mcC_{ad}$ with $X_0\ot fg=X_0$ and $S_{X_0,fg}=2=\dim(X_0)\dim(fg)$, so that the balancing equation implies $\theta_{fg}=1$, and the result follows.
 
 For the case $k=2$, $\mcC$ is a product of Ising-type categories, and there is a labeling ambiguity among $f,g$ and $fg$.  Precisely one of these is a boson, since the non-trivial invertible object in any Ising category is a fermion.  So we may assume $fg$ is the boson, and the result follows as above.
  %
  \end{proof}

In particular, the subcategory $\langle fg\rangle$ generated by $fg$ is equivalent, as a symmetric ribbon category, to $\Rep(\mbbZ_2)$.

We also have the following lemma describing the other invertible objects.
   
  \begin{lemma}
Let $\mathcal C$ be  a metaplectic modular category of dimension $4N$ with $4\mid N$. If $8\mid N$ then  all the non-trivial invertible objects of $\mathcal C$ are bosons, and if $8\nmid N$ two are fermions (and one is a boson).
  \end{lemma}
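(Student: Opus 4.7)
The plan is to determine $\theta_f$, and hence $\theta_g$, by reading off values of the universal grading character $\chi_f\colon\mcU(\mcC)\to\mbbC^*$ of $f$ from fusion rules in which $f$ has a fixed point. Recall that on any $X\in\mcC_\gamma$ the ribbon axiom gives $c_{X,f}c_{f,X}=(\theta_{f\ot X}/(\theta_f\theta_X))\Id_X=\chi_f(\gamma)\Id_X$, and that the GN duality $G(\mcC)\cong\widehat{\mcU(\mcC)}$ forces $\chi_f$ to be a non-trivial character of $\mcU(\mcC)=\mbbZ_2\times\mbbZ_2$.

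First I would reduce to a sign problem. The previous lemma yields $\theta_{fg}=1$, and its proof already observes that $\mcC_{pt}\subseteq\mcC_{ad}=\mcC_{pt}'$ is symmetric. Hence the associated bihomomorphism of the pointed structure on $\mcC_{pt}$ is trivial, so its quadratic form on $G(\mcC)=\mbbZ_2\times\mbbZ_2$ is additive and $\theta_f\theta_g=\theta_{fg}=1$. Since $\theta_f,\theta_g\in\{\pm 1\}$, this forces $\theta_f=\theta_g$. Applying the ribbon-axiom identity above to $f\ot V_1\cong V_1$ with $V_1\in\mcC_{(1,0)}$ gives $\chi_f((1,0))=1/\theta_f=\theta_f$.

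I would then case-split on the parity of $r=k/2-1$, equivalently on $N\pmod 8$. If $r$ is odd (i.e.\ $8\mid N$) then $(r-1)/2$ is an integer and the fusion rule $f\ot X_i=X_{r-1-i}$ shows $f\ot X_{(r-1)/2}\cong X_{(r-1)/2}$; since $X_{(r-1)/2}\in\mcC_{ad}$, the character $\chi_f$ is trivial at its grading, so $1=\chi_f((0,0))=1/\theta_f$ forces $\theta_f=1$. Combined with $\theta_{fg}=1$, all three non-trivial invertibles are bosons. If instead $r$ is even (i.e.\ $N\equiv 4\pmod 8$), then $(r-1)/2$ is not integral, but $r/2$ is, and the rule $f\ot Y_i=Y_{r-i}$ shows $f\ot Y_{r/2}\cong Y_{r/2}$ with $Y_{r/2}\in\mcC_{(1,1)}$, giving $\chi_f((1,1))=\theta_f$. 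Multiplicativity in $\mbbZ_2\times\mbbZ_2$ (using $(0,1)=(1,0)+(1,1)$) now yields $\chi_f((0,1))=\theta_f^2=1$, so $\chi_f$ takes values $(1,\theta_f,1,\theta_f)$ on $(0,0),(1,0),(0,1),(1,1)$. Non-triviality of $\chi_f$ then forces $\theta_f=-1$, so $f$ and $g$ are fermions and $fg$ remains the unique non-trivial boson.

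The main obstacle is arranging the fixed-point computations so that non-triviality of $\chi_f$ can actually be invoked in the $r$-even case: one has to identify a fixed object in each of two distinct grading components ($V_1\in\mcC_{(1,0)}$ and $Y_{r/2}\in\mcC_{(1,1)}$), since a fixed object in only $\mcC_{ad}\cup\mcC_{(1,0)}$ would still be compatible with a non-trivial character sending $(0,1)\mapsto -1$. I would also sanity-check the degenerate small cases $r=0$ ($N=4$) and $r=1$ ($N=8$): in each the relevant fixed objects ($V_1,Y_0$ in the former, $V_1,X_0$ in the latter) are still present, so the argument goes through uniformly.
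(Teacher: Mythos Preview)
Your argument is correct. The $8\mid N$ case is essentially the same as the paper's, just phrased directly rather than contrapositively: the paper observes that a fermion in $\mcC_{pt}=\mcC_0'$ would have to act fixed-point-freely on $\mcC_0$, forcing $r$ even, whereas you exhibit the fixed point $X_{(r-1)/2}$ and read off $\theta_f=\chi_f((0,0))=1$.

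For $8\nmid N$ the two proofs diverge slightly. The paper's balancing line $-2\theta_{Y_{r/2}}\theta_f=2\theta_{Y_{r/2}}$ implicitly uses the previous lemma: since $\chi_{fg}$ is the unique nontrivial character of $\mbbZ_2\times\mbbZ_2$ trivial on $(1,1)$, the isomorphism $G(\mcC)\cong\widehat{\mcU(\mcC)}$ forces $\chi_f((1,1))=\chi_g((1,1))=-1$, whence $S_{f,Y_{r/2}}=-2$ and balancing against $f\ot Y_{r/2}\cong Y_{r/2}$ yields $\theta_f=-1$. You instead avoid invoking $\chi_{fg}$ on $\mcC_{(1,1)}$ by evaluating $\chi_f$ at \emph{two} nontrivial gradings, via the fixed points $V_1\in\mcC_{(1,0)}$ and $Y_{r/2}\in\mcC_{(1,1)}$, and then let non-triviality of $\chi_f$ finish. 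Your route is a bit more self-contained and makes the uniformity across the small cases $r=0,1$ transparent; the paper's is marginally shorter once the preceding lemma is in hand.
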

  \begin{proof}
    From the description above, we know that there are $r = \frac{N}{4}-1$  $2$-dimensional simple objects in the adjoint component $\mathcal C_0$, and $r+1$ $2$-dimensional simple objects in $\mcC_{(1,1)}$. Moreover, the pointed subcategory is symmetric and has fusion rules like $\mbbZ_2\times\mbbZ_2$.
    
    Since $\mathcal C_0' = \mcC_{pt}$, if one of the invertibles is a fermion, the action of tensoring with that fermion must be fixed-point free on $\mathcal C_0$. Then, the number of $2$-dimensional simple objects in $\mcC_0$, $r$, must be even in this case. In particular, $N= \dim \mathcal C = 4 + 4r$ is not divisible by $8$.  So if $8\mid N$, $r$ is odd and so there are no fermions.  
    
On the other hand, if $8\nmid N$ we must have $r$ even, and so $Y_{\frac{r}{2}}^{\ot 2}=\one\oplus f\oplus g\oplus fg$.  The balancing equation then gives, for example, $-2\theta_{Y_{\frac{r}{2}}}\theta_f=2\theta_{Y_{\frac{r}{2}}}$, so that $\theta_f=\theta_g=-1$.
      \end{proof}

\subsection{Proof of Theorem \ref{thm:particleHoleAnalog}}

We can now prove a partial analogue to Theorem \ref{Theorem: Metaplectic main result} for metaplectic modular categories of dimension $4N$ with $4\mid N$.  One exception is the case $N=4$.

\begin{theorem} \label{thm:particleHoleAnalog}
    If $\mcC$ is a metaplectic modular category of dimension $4N>16$ with $4\mid N$ then the de-equivariantization $\mcD:=\mcC_{\mbbZ_2}$ by $\langle fg\rangle=\Rep(\mbbZ_2)$ is a  generalized
    Tambara-Yamagami category of dimension $2N$, and, the trivial
    component $\mcD_0:=[\mcC_{\mbbZ_2}]_0\cong\mcC(\mbbZ_N,q)$ is a pointed cyclic modular category.  Moreover, $\mcC$ is obtained from $\mcC(\mbbZ_N,q)$ via a $\mbbZ_2$-gauging of the particle-hole symmetry.
  \end{theorem}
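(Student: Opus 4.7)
The plan is to de-equivariantize $\mcC$ by the Tannakian subcategory $\langle fg\rangle \cong \Rep(\mbbZ_2)$, guaranteed by Lemma \ref{even_boson}, and analyze the resulting $\mbbZ_2$-crossed braided category $\mcD := \mcC_{\mbbZ_2}$. First I would read off the simples of $\mcD$ from the $\langle fg\rangle$-orbits on the simples of $\mcC$: using $f\otimes X_i = g\otimes X_i$ and $f\otimes Y_j = g\otimes Y_j$, each $X_i$ and each $Y_j$ is $fg$-fixed, and hence splits into two dimension-$1$ simples in $\mcD$; the orbits $\{\one, fg\}$, $\{f, g\}$, $\{V_1, V_2\}$, $\{W_1, W_2\}$ each merge into a single simple of the original dimension. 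This yields $N$ invertibles and two non-invertible simples $\widetilde V, \widetilde W$ of dimension $\sqrt{N/2}$, so $\dim\mcD = 2N$, and the generalized Tambara-Yamagami property is immediate from the fusion rules of $V_1^{\otimes 2}$, $W_1^{\otimes 2}$, and $V_1\otimes W_1$.

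Next I would identify $\mcD_0$. By Lemma \ref{even_boson} combined with a dimension count, $C_{\mcC}(\langle fg\rangle) = \mcC_0\oplus\mcC_{(1,1)}$ (both sides have dimension $2N$), and the double centralizer theorem in the modular category $\mcC$ then forces the M\"uger center of this centralizer to be $\langle fg\rangle$. Consequently, $\mcD_0$, the de-equivariantization of this centralizer by $\langle fg\rangle$, is modular of dimension $N$; since all $N$ of its simples are invertible, $\mcD_0 \cong \mcC(A, q)$ for some abelian group $A$ of order $N$ with non-degenerate quadratic form $q$.

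The main obstacle is showing $A$ is cyclic. My approach is to exhibit an explicit generator: iterated fusion $Y_0^{\otimes n}$ in $\mcC$ (via the $Y_i\otimes Y_j$ rules) sweeps through every $X_\ell$ and every $Y_m$, so the images in $\mcD_0$ of the powers of one of the two splitting components, say $Y_0^+$, should exhaust $A$. Combined with the constraint that the order of $Y_0^+$ divides $|A| = N$, this should pin down $\langle Y_0^+\rangle = A$ and force $A\cong\mbbZ_N$. The assumption $4N>16$ enters here: when $k = 2$ there are no $X_i$, and one obtains instead $A\cong\mbbZ_2\times\mbbZ_2$ in line with the Deligne decomposition of $SO(4)_2$.

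Finally, to conclude that $\mcC$ is a $\mbbZ_2$-gauging of $\mcC(\mbbZ_N, q)$ via the particle-hole symmetry, note that $\mcC = \mcD^{\mbbZ_2}$; the corresponding $\mbbZ_2$-action restricts to $\mcD_0$ as a braided tensor autoequivalence, which by the proposition on $\Aut_{\ot}^{br}\mcC(\mbbZ_N,q)$ is either trivial or the particle-hole symmetry. A trivial action would make $C_{\mcC}(\langle fg\rangle)$ a Deligne product $\mcC(\mbbZ_N, q)\boxtimes\Rep(\mbbZ_2)$, all of whose simples are one-dimensional, contradicting the presence of the two-dimensional $X_i$ and $Y_j$. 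Hence the action must be the particle-hole symmetry, exhibiting $\mcC$ as the associated $\mbbZ_2$-gauging.
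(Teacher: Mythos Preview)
Your proposal is correct and follows the same route as the paper's proof: de-equivariantize by $\langle fg\rangle$, read off the simples of $\mcD$ from $fg$-orbits to obtain the generalized Tambara--Yamagami structure, identify $\mcD_0$ as a pointed modular category of dimension $N$, show it is cyclic by exhibiting a splitting component of $Y_0$ as a generator, and then rule out the trivial $\mbbZ_2$-action.

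Two small differences are worth flagging. For cyclicity, the paper carries out an explicit induction on the fusion rules, defining $X_i^{(j)}:=Y_0^{(j)}\otimes Y_i^{(j)}$ and $Y_{i+1}^{(j)}:=Y_0^{(j)}\otimes X_i^{(j)}$ step by step, and then separately shows that $Z=F(f)=F(g)$ is a power of $Y_0^{(1)}$ via $Y_0\otimes Y_r=X_{r-1}\oplus f\oplus g$. Your Lagrange shortcut is slicker, but note that the sweep of $Y_0^{\otimes n}$ through all $X_\ell,Y_m$ in $\mcC$ only tells you that the subgroup $\langle Y_0^+,Y_0^-\rangle$ contains the $N-1$ elements $\one_\mcD,X_\ell^{(\pm)},Y_m^{(\pm)}$; to conclude $\langle Y_0^+\rangle=A$ you need $Y_0^-=(Y_0^+)^{-1}$, which follows from self-duality of $Y_0$ together with $F(Y_0^{\otimes 2})=2\cdot\one_\mcD\oplus X_0^{(1)}\oplus X_0^{(2)}$ (ruling out $(Y_0^+)^2=\one_\mcD$). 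Once that is in hand, $|\langle Y_0^+\rangle|\geq N-1$ and Lagrange finish the job without needing to locate $Z$ by hand.

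For the particle-hole step, the paper invokes the argument of \cite{ACRW1,BPR} via counting fixed points and invertibles in the gauged category; your observation that a trivial action would force $C_\mcC(\langle fg\rangle)\cong\mcC(\mbbZ_N,q)\boxtimes\Rep(\mbbZ_2)$ to be pointed, contradicting the presence of the $2$-dimensional $X_i,Y_j$, is a perfectly good alternative.
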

  \begin{proof}
We continue with the notation and labeling as above, with $\one,f,g,fg,X_i,Y_j$ with $0\leq i\leq r-1$ and $0\leq j\leq r$ where $r=\frac{k}{2}-1$ and $N=2k$.

    As we noted previously, by \lemmaref{even_boson}, $\langle
    gf\rangle\cong\Rep\(\mbbZ_2\)$ is a Tannakian subcategory of $\mcC$. In
    particular we have the de-equivariantization functor  $F:\mcC\rightarrow\mcC_{\mbbZ_2}$, the image of which is a
    braided $\mbbZ_2$-crossed fusion category of dimension $2N$ \cite{DGNO1}. In particular,
    the trivial component $\mcD_0$ of $\mcD=\mcC_{\mbbZ_2}$ is modular of dimension $N$ \cite[Proposition 4.56(ii)]{DGNO1}.

    Since $gf$ fixes the 2-dimensional objects $X_i$, $Y_i$ of $\mcC$, their images under $F$ give rise to $k-1$ inequivalent pairs $X_i^{(1)},X_i^{(2)},Y_i^{(1)},Y_i^{(2)}$  of distinct invertible objects in
    $\mcD$. On
    the other hand, $gf\otimes \one=gf$ and $g\otimes gf=f$, so that $F(gf)=F(\one)=\one_\mcD$ and $F(g)=F(f)=Z$ are distinct invertible objects in $\mcD$.  In all, we have $2(k-1)+2=N$ invertible objects, all of which are in the trivial component $\mcD_0$.  Thus $\mcD_0\cong\mcC(A,q)$ for some abelian group of order $N$.
    
    Since the $4$ objects in $\mcC_{(0,1)}\cup\mcC_{(1,0)}$ form two orbits under ${\_}\ot fg$, we see that $\mcD_1$ has two
    non-integral simple objects which each have dimension $\sqrt{N/2}$. In particular,
    $\mcD$ is generalized Tambara-Yamagami category \cite{Lip1}.
    
    It remains to verify that the classes of simple objects in $\mcD_0$ form a \emph{cyclic} group, which we do following the inductive idea of \cite{ACRW1}.
    

    First note that $Y_{0}\otimes Y_{0}=\one\oplus fg\oplus X_{0}$. Since  $F(\one)=F(fg)=\one_\mcD$ is the 
  the trivial object under the de-equivariantization, we
  have:
  \begin{align*} F(Y_0^{\ot 2})=
    \paren{Y_{0}^{\paren{1}}\oplus Y_{0}^{\paren{2}}}^{\otimes
    2}=2(Y_0^{(1)}\ot Y_0^{(2)})\oplus \left(Y_0^{(1)}\right)^{\ot 2}\oplus\left(Y_0^{(2)}\right)^{\ot 2} =\one_{\mcD}\oplus X_{0}^{\paren{1}}\oplus X_{0}^{\paren{2}}
  \end{align*}

 We will show that the class of $Y_{0}^{(1)}$ generates $\left[\mcC_{\mbbZ_2}\right]_0$. Examining multiplicities we see that we may assume (using the labeling ambiguity $X_0^{(j)}$, $j=1,2$) 
  $Y_{0}^{\paren{1}}\otimes Y_{0}^{\paren{2}}=\one_{\mcD}$ while
  $Y_{0}^{\paren{j}}\otimes Y_{0}^{\paren{j}}=X_{0}^{\paren{j}}$.

  Proceeding in a similar way with $Y_0\ot X_0=Y_0\oplus Y_1$ we must match the $4$ simple objects $Y_0^{(i)}\ot X_0^{(j)}$ for $1\leq i,j\leq 2$ with the four simple objects $Y_a^{(b)}$ for $a=0,1$ and $b=1,2$.  Now since $Y_0^{(2)}=(Y_0^{(1)})^*$ we must have  
  $$Y_{0}^{\paren{1}}\otimes
  X_{0}^{\paren{1}}\oplus Y_{0}^{\paren{2}}\otimes
  X_{0}^{\paren{2}}=Y_{1}^{\paren{1}}\oplus Y_{1}^{\paren{2}}.$$ This is again a labeling ambiguity so we may define, without loss of generality, $Y_{0}^{\paren{j}}\otimes
  X_{0}^{\paren{j}}=Y_{1}^{(j)}$ for $j=1,2$.

Now notice that for $n\leq 2r$ the tensor power $Y_0^{\ot n}$ contains exactly one simple object that has not appeared in lower tensor powers: namely $X_{i}\subset Y_0^{\ot (2i+2)}$ and $Y_i\subset Y_0^{\ot (2i+1)}$.  Thus we may proceed inductively and define (using Frobenius reciprocity and the labeling ambiguity) for $j=1,2$ and $0\leq i\leq r-1$:
$$Y_{i+1}^{(j)}:=Y_0^{(j)}\ot X_i^{(j)},\quad X_{i}^{(j)}:=Y_0^{(j)}\ot Y_i^{(j)}.$$ 
  Thus we see that all $Y_{i}^{\paren{1}}$ and $X_{i}^{\paren{1}}$
  are tensor powers of $Y_{0}^{\paren{1}}$ and all
  $X_{i}^{\paren{2}}$ and $Y_{i}^{\paren{2}}$ are tensor powers of
  $Y_{0}^{\paren{2}}$. 
  Next, note that $Y_{i}^{\paren{2}}$ represents the isomorphism class that is the multiplicative inverse to that of $Y_{i}^{\paren{1}}$ in the Grothendieck ring of $\left[\mcC_{\mbbZ_2}\right]_0$, since $Y_j$ is self-dual.   Thus all $Y_{a}^{\paren{j}}$ for $0\leq a\leq r$ and $X_{b}^{\paren{j}}$ for $0\leq b\leq r-1$ ($j=1,2$) are in the subcategory generated by $Y_0^{(1)}$.  

  It remains to show that $F(f)=F(g)=Z$ is a tensor power of $Y_0^{(1)}$.  
  For this we compute:
  $$Y_{0}\otimes Y_{r}=Y_0\ot Y_{0}\ot f=(\one\oplus fg\oplus X_0)\ot f=X_{r-1}\oplus f\oplus g$$
  Applying the functor $F$ and the assignments above we see that $Y_0^{(j)}\ot Y_r^{(j)}=X_{r-1}^{(\overline{j})}$ for $j=1,2$ where $\overline{2}=1$ and $\overline{1}=2$.  This leaves, for $j=1,2$, $Y_0^{(j)}\ot Y_r^{(\overline{j})}=Z.$  In particular, since $Y_r^{(2)}$ is a tensor power of $Y_0^{(1)}$, we have that $Z$ is also.  Thus $\mcD_0\cong\mcC(\mbbZ_N,q)$ is a pointed cyclic modular category.
  
Now we apply the same argument as in \cite{ACRW1} and \cite{BPR} to see that the only $\mbbZ_2$ gauging on $\mcC(\mbbZ_N,q)$ that has four invertible objects is the particle-hole symmetry, i.e. inversion on $\mbbZ_N\cong \mbbZ_{2^{a}}\times\mbbZ_{p_1^{a_1}}\times\cdots\times \mbbZ_{p_b^{a_b}}$ (where $a\geq 2$.  Indeed, there can be only two fixed points under the $\mbbZ_2$ action (namely the identity and the unique element of order $2$ coming from the $\mbbZ_{2^{a}}$ factor) as otherwise we obtain $\boxtimes$ factors of the form $\mcC(\mbbZ_{p_i^{a_i}},q_i)$ in the gauged category, which is incompatible with the structure of $\mcC$.
  \end{proof}
  
\subsection{Count of inequivalent categories} The following proposition provides a count of inequivalent categories in the $4 \mid N$ case analogous to the count of  Theorem \ref{Theorem: Metaplectic main result}.
 
 \begin{prop} \label{prop:count}
 Let $N > 4$ be an integer with $4 \mid N$ and with prime factorization $N = 2^a p_1^{a_1} \cdots p_b^{a_b}$.  Then there exist precisely $3(2^{b+2})$ inequivalent metaplectic modular categories of dimension $4N$.
 \end{prop}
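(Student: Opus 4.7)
The plan is to apply Theorem \ref{thm:particleHoleAnalog} and then enumerate the resulting gauging data in two independent steps. By that theorem, every metaplectic modular category $\mcC$ of dimension $4N>16$ with $4\mid N$ arises as a $\mbbZ_2$-gauging of the particle-hole symmetry on some pointed cyclic modular category $\mcC(\mbbZ_N,q)$, and the metric group $(\mbbZ_N,q)$ is recovered (up to equivalence) as the trivial component of the $\mbbZ_2$-de-equivariantization. Consequently, inequivalent metaplectic categories correspond bijectively to inequivalent pairs (metric group, gauging datum), and the count factors as (\# metric groups) $\times$ (\# inequivalent gaugings of the particle-hole action per metric group).

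For the first factor I would use the CRT splitting $\mbbZ_N\cong\mbbZ_{2^a}\times\prod_{i=1}^b\mbbZ_{p_i^{a_i}}$ and the fact that non-degenerate quadratic forms split accordingly. By the enumeration recalled in the earlier proposition identifying the particle-hole symmetry as the only non-trivial element of $\Aut_\ot^{br}\mcC(\mbbZ_N,q)$, there are exactly $2$ inequivalent non-degenerate quadratic forms on each $\mbbZ_{p_i^{a_i}}$, and, because $a\geq 2$, exactly $4$ on the $2$-primary factor $\mbbZ_{2^a}$. This yields $4\cdot 2^b=2^{b+2}$ inequivalent metric groups.

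For the second factor I would adapt the gauging analysis carried out in \cite{ACRW1,BPR} for the $4\nmid N$ case. As noted in the gauging subsection, the obstructions to gauging the particle-hole action vanish, so a non-empty set of gaugings exists and forms a torsor over the cohomology classifying $\mbbZ_2$-crossed braided extensions of $\mcC(\mbbZ_N,q)$ together with their equivariantization data. Computing this torsor directly for $a\geq 2$ should yield exactly $3$ inequivalent gaugings per metric group (as opposed to $2$ in the $a\leq 1$ situation); distinctness of the resulting metaplectic categories is then confirmed by checking that the de-equivariantization recovers the metric group and that the residual gauging parameters are detected by the $T$-matrix values on the $V_i$, $W_i$. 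Multiplying the two factors gives $3\cdot 2^{b+2}$.

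The main obstacle is the second step, namely showing that exactly $3$ (and not $2$ or $4$) gaugings occur when $a\geq 2$. This requires keeping track of two pieces of data simultaneously: the $\mbbZ_2$-crossed extension of $\mcC(\mbbZ_N,q)$ (parameterized by $H^2$ with coefficients in the invertible objects, which is enlarged when the fixed subgroup under $\phi$ includes the extra order-$2$ element from $\mbbZ_{2^a}$) and the equivariantization datum on top of that extension; one then must verify that distinct choices produce braided-inequivalent modular categories, which is done by comparing modular data as in \cite{ACRW1,BPR}. The appearance of the odd factor $3$, rather than a power of $2$ as in Theorem \ref{Theorem: Metaplectic main result}, is the principal novelty of the $4\mid N$ case and accounts for all the case-specific combinatorics of the argument.
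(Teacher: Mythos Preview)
Your overall strategy matches the paper's: reduce to counting metric groups $\mcC(\mbbZ_N,q)$ (there are $2^{b+2}$, as you correctly compute) and then count gaugings of the particle-hole symmetry on each. The gap is entirely in the second factor, which you yourself flag as the ``main obstacle'' but do not resolve.

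The relevant torsor for the gauging data is $H^2_\rho(\mbbZ_2,\mbbZ_N)\times H^3(\mbbZ_2,U(1))\cong\mbbZ_2\times\mbbZ_2$, so there are \emph{four} candidate gaugings, not three. Your remark that the $H^2$ group is ``enlarged'' when $a\geq 2$ is not right: $H^2_\rho(\mbbZ_2,\mbbZ_N)\cong\mbbZ_2$ whenever $N$ is even, regardless of whether $4\mid N$. What actually changes when $4\mid N$ is that the nontrivial class $\alpha\in H^2_\rho(\mbbZ_2,\mbbZ_N)$ acts \emph{trivially} on the defect fusion rules: a representative cocycle has $\omega(1,1)=N/2$, which is even, so tensoring by $[N/2]_N$ preserves the even/odd decomposition in $\sigma_\pm\otimes\sigma_\pm$ and $\sigma_\pm\otimes\sigma_\mp$. (When $2\| N$ the class $N/2$ is odd and $\alpha$ swaps the two fusion patterns, which is why the $4\nmid N$ analysis in \cite{BPR} proceeds differently.) Thus all four gaugings share the same fusion rules and cannot be separated at that level.

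The collapse from four to three comes from a relabeling symmetry of the two defects, and the paper detects it not through $T$-matrix values on $V_i,W_i$ as you suggest, but through the Frobenius--Schur indicators of $\sigma_\pm$ (following \cite[Section X.F]{BBCW1}): the four choices give FS-indicator pairs $(1,1)$, $(-1,-1)$, $(1,-1)$, $(-1,1)$, and the last two are exchanged by swapping $\sigma_+\leftrightarrow\sigma_-$. The three surviving classes are distinguished by these invariants, yielding exactly $3$ inequivalent gaugings per metric group and hence $3\cdot 2^{b+2}$ in total. Without this identification step your outline cannot produce the odd factor $3$; a torsor alone never has odd order here.
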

  \begin{proof}  
   Theorem \ref{thm:particleHoleAnalog} states that $\mcC$ is obtained from $\mcC(\mbbZ_N,q)$ for some $q$ via particle-hole gauging.  From the prime factorization of $N$, we find that there are exactly $4\cdot 2^{b}$ inequivalent categories of the form $\mcC(\mbbZ_N,q)$ (see \cite{GalindoJaramillo}).  We wish to determine the number of distinct (particle-hole) gaugings of a given fixed $\mcC(\mbbZ_N,q)$.
  
   Let $\rho: \mbbZ_2 \to \Aut(\mbbZ_N)$ denote the map determined by $\rho(1)(n) = -n$, i.e. the particle-hole symmetry.  One known extension $\mcD$ of $\mcC(\mbbZ_N, q)$ by $\rho$ has two defects $\sigma_\pm$ with fusion rules determined by
  
   \begin{itemize}
    \item $\sigma_+ \ot \sigma_+ = \bigoplus_{a \text{ even}} [a]_N$
    \item $\sigma_+ \ot \sigma_- = \bigoplus_{a \text{ odd}} [a]_N$
    \item $\sigma_\pm \ot [a] = \begin{cases}
    \sigma_\pm & [a]_N \text{ even} \\
    \sigma_\mp & [a]_N \text{ odd}
    \end{cases},$
  \end{itemize}
  where $[a]_N$ denotes the simple object corresponding to $a \in \mbbZ_N$ \cite{BBCW1}.  
  
  Given a fixed extension (in our case, $\mcD$), any extension of $\mcC(\mbbZ_N,q)$ by $\rho$ corresponds to a gauging datum, i.e. a pair $(\alpha, \beta) \in H^2_\rho(\mbbZ_2, \mbbZ_{N}) \times H^3(\mbbZ_2, U(1)) \cong \mbbZ_2 \times \mbbZ_2$ such that a certain obstruction $O_4(\rho, \alpha)$ vanishes \cite{CGPW1}.
  
  At the level of fusion rules, the action of the nontrivial element $\alpha \in H^2_\rho(\mbbZ_2, \mbbZ_{N})$ twists the tensor product of defects by a representative 2-cocycle.  One such representative cocycle is the normalized cocycle $\omega \in Z^2(\mbbZ_2, \mbbZ_{N})$ determined by $\omega(1,1) = N/2$.  The new tensor product $\otimes'$ is given by
  $$ x_g \otimes' x_h  = [\omega(g,h)]_N \otimes x_g \otimes x_h, $$
  for $x_g$ in the $g$-component of the extension and $x_h$ in the $h$-component \cite{ENO3}.  Since $\omega$ is normalized, the only non-trivial twisting occurs when both $g$ and $h$ are nontrivial.  Since $\omega(1,1) = N/2$ is even (because $4$ divides $N$),
  $$ \sigma_\pm \otimes' \sigma_\pm  = [N/2]_N \otimes \sigma_\pm \otimes \sigma_\pm = \bigoplus_{a \text{ even}} [a]_N = \sigma_\pm \otimes \sigma_\pm$$
  and 
  $$ \sigma_\pm \otimes' \sigma_\mp  = [N/2]_N \otimes \sigma_\pm \otimes \sigma_\mp = \bigoplus_{a \text{ odd}} [a]_N = \sigma_\pm \otimes \sigma_\mp.$$
  Hence, the action of $\alpha$ on the fusion rules is trivial.   However, since $\beta$ is an element of a torsor over $H^2_\rho(\mbbZ_2,\mbbZ_N)$, the choice of $\alpha$ is still relevant. 
  
  There are two potential obstructions to extending $\mcC(\mbbZ_N,q)$ by $\rho$.
  The first obstruction in $H_\rho^3(\mbbZ_2,\mbbZ_N)$ vanishes since we know that a gauging exists.  The second obstruction vanishes since $H^4(\mbbZ_2,U(1)) \cong 0$.  There is only one possible set of fusion rules on the $\mbbZ_2$-extension since the action of $H_\rho^2(\mbbZ_2,\mbbZ_N)$ on the fusion rules is trivial. There is a choice of associativity constraints on the $\mbbZ_2$-extension, so that \emph{a priori} we have $4$ distinct theories.  However, by examining the pairs of FS-indicators of the defects $\sigma_\pm$ as in \cite[Section X.F]{BBCW1} we find that they are $(1,1),(-1,-1),(1,-1)$ and $(-1,1)$, so that the latter two theories can be identified by relabeling.  Thus we have $3$ distinct gaugings for each fixed $\mcC(\mbbZ_N,q)$, yielding a count of $3(2^{b+2})$ metaplectic modular categories of dimension $4N>16$ with $4\mid N$.  
\end{proof}

\subsection{Degenerate case: metaplectic modular categories of dimension $16$}

As we have observed above, metaplectic categories of dimension $16$ are of the form $\Ising^{(\nu_1)}\boxtimes \Ising^{(\nu_2)}$ where $\nu_i\in\mbbZ_{16}^{\times}$.  In that case we have $Y_0^{\ot 2}=\one\oplus f\oplus g\oplus fg$, so that we cannot determine $Y_0^{(1)}\otimes Y_0^{(2)}$ from $(Y_0^{(1)})^{\ot 2}$ using only the fusion rules--one must be $\one_\mcD$ and the other $Z$.  The two possiblities lead to $\mcD_0\cong\mcC(\mbbZ_4,q_1)$ or $\mcD_0\cong\mcC(\mbbZ_2\times\mbbZ_2,q_2)$.  Both of these can occur: see \cite{BBCW1}.
 
The following proposition provides a detailed count for the $N = 4$ case. 
\begin{prop}
  \label{prop:Ising2}
  There are precisely 20 inequivalent modular categories of the form $\Ising^{\nu_1}\boxtimes\Ising^{\nu_2}$: 
  \begin{itemize}
  \item 12 come from gauging the particle-hole symmetry of a modular category of the form $\mcC(\mbbZ_4,q)$
  \item 8 come from gauging a modular category of the form $\mcC(\mbbZ_2 \times \mbbZ_2, q)$ containing a fermion, 2 from each of the following categories:  
  \begin{itemize}
  \item TC (Toric Code), 
  \item 3F (3 fermions), 
  \item $\Sem^2$ (semion-squared),
  \item and $\overline{\Sem}^2$ (semion-conjugate-squared).
  \end{itemize}
  \end{itemize}
\end{prop}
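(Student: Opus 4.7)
The plan is to partition the count according to the two trivial-component structures allowed by the degenerate case $N=4$ (as in the paragraph preceding the proposition): either $\mcD_0 \cong \mcC(\mbbZ_4, q)$, or $\mcD_0 \cong \mcC(\mbbZ_2\times\mbbZ_2, q)$.  In each branch I apply the gauging-data classification of the proof of Proposition \ref{prop:count}.

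For the cyclic branch, there are $4$ non-degenerate quadratic forms on $\mbbZ_4$ (indexed by $u \in \mbbZ_8^\times$), and since $4 \mid N$ still holds, the argument in Proposition \ref{prop:count} applies verbatim: the cocycle twist acts trivially on fusion rules, the $O_3$ and $O_4$ obstructions vanish, and the four \emph{a priori} gauging data $(\alpha,\beta) \in H^2_\rho\times H^3$ collapse to $3$ inequivalent theories via the defect FS-indicator relabeling.  This gives $4\cdot 3 = 12$ Ising-squared theories.

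For the Klein branch, I first note that since $\theta_f = \theta_g = -1$ (by the lemma preceding Theorem~\ref{thm:particleHoleAnalog}), the object $Z := F(f) = F(g)$ is a fermion in $\mcD_0$; hence the quadratic form $q$ must admit a fermion.  Enumerating $\Aut(\mbbZ_2\times\mbbZ_2) = GL_2(\mathbb{F}_2)$-orbits of non-degenerate quadratic forms yields $5$ pointed modular categories, of which exactly $TC$, $3F$, $\Sem^2$ and $\overline{\Sem}^2$ contain a fermion; the remaining orbit $\Sem\boxtimes\overline{\Sem}$ (one boson, one semion, one anti-semion) does not arise.  For each of the four allowed categories, the relevant $\mbbZ_2$-action $\rho$ swaps the two non-fermion invertibles and fixes $Z$.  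A Tate-cohomology computation gives $H^2_\rho(\mbbZ_2, \mbbZ_2\times\mbbZ_2) = 0$ for this swap action (the norm map is surjective onto the invariant submodule), so the gauging data reduces to $\beta \in H^3(\mbbZ_2, U(1)) \cong \mbbZ_2$; the obstructions vanish, and with no non-trivial $\alpha$ the FS-indicator relabeling of Proposition~\ref{prop:count} does not reduce the count.  This produces $4\cdot 2 = 8$ further theories, both choices of $\beta$ being realized explicitly in \cite{BBCW1}.

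The two branches are disjoint because $\mcD_0$ is a braided invariant of $\mcC$ and $\mbbZ_4 \not\cong \mbbZ_2\times\mbbZ_2$, giving a grand total of $12 + 8 = 20$.  The main obstacle will be the Klein branch: besides the cohomology computation, one must verify case-by-case that each of $TC$, $3F$, $\Sem^2$, $\overline{\Sem}^2$ genuinely produces two distinct gaugings (i.e., that they are not identified by further relabeling of the pivotal structure on the defects), essentially by checking the obstruction-theoretic data of \cite{CGPW1} against the explicit constructions of \cite{BBCW1}.
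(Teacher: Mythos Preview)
Your proof is correct and follows essentially the same two-branch strategy as the paper: apply the Proposition~\ref{prop:count} argument verbatim to the four $\mcC(\mbbZ_4,q)$ to get $12$, then count two gaugings apiece for the four fermion-containing $\mcC(\mbbZ_2\times\mbbZ_2,q)$ to get $8$. The paper simply cites \cite{BBCW1} for the Klein-branch count, whereas you supply the supporting details (the fermion constraint on $Z$, the explicit list of admissible $(\mbbZ_2^2,q)$, and the Tate computation $H^2_\rho(\mbbZ_2,\mbbZ_2^2)=0$ for the swap action); one small wording slip is that for $3F$ the action swaps two \emph{fermions} rather than ``non-fermion invertibles,'' but the cohomology and count are unaffected.
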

\begin{proof}
 Notice that when $N=4$ the argument of Proposition \ref{prop:count} is still valid if applied only to $\mcC(\mbbZ_4,q)$: we still obtain $12$ theories of the form $\Ising^{\nu_1}\boxtimes\Ising^{\nu_2}$ from these.  By counting gaugings as in \cite{BBCW1}, we also get 8 more $\Ising^{\nu_1}\boxtimes\Ising^{\nu_2}$ theories from gauging: two from each of the $4$ distinct $\mcC(\mbbZ_2\times \mbbZ_2,q)$ modular categories containing a fermion.  Thus we obtain a count of $20$ metaplectic modular categories of dimension $16$.
\end{proof}

Alternatively, we can count $T$-matrices up to dimension-preserving permutation to get a lower bound on the count of inequivalent categories.  As shown below, this lower bound coincides with the gauging-based count, resulting in the following proposition.

\begin{prop}
A category of the form $\Ising^{\nu_1} \boxtimes \Ising^{\nu_2}$ is determined by its modular data.
\end{prop}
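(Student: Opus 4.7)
The plan is to show that the $T$-matrix alone, taken up to dimension-preserving permutation, distinguishes the twenty equivalence classes enumerated in Proposition \ref{prop:Ising2}; since $T$ is part of the modular data, this will suffice.

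First I would write down the $T$-matrix of $\Ising^{\nu_1}\boxtimes\Ising^{\nu_2}$ explicitly. Using $\theta_\sigma=\zeta_{16}^\nu$ in $\Ising^\nu$, the nine simple objects split by dimension as follows: four invertibles with twists $1,-1,-1,1$; four objects of dimension $\sqrt{2}$ with twists $\pm\zeta_{16}^{\nu_1},\pm\zeta_{16}^{\nu_2}$; and the unique dimension-$2$ object $\sigma\boxtimes\sigma$ with twist $\zeta_{16}^{\nu_1+\nu_2}$. Since a dimension-preserving permutation can only shuffle within each dimension class, the $T$-orbit is captured by the multiset $M(\nu_1,\nu_2)=\{\pm\zeta_{16}^{\nu_1},\pm\zeta_{16}^{\nu_2}\}$ together with the single value $t(\nu_1,\nu_2)=\zeta_{16}^{\nu_1+\nu_2}$.

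Next I would argue that $(M,t)$ recovers the orbit of $(\nu_1,\nu_2)$ under the group $\Gamma\cong\mbbZ_2\times\mbbZ_2$ generated by coordinate swap and the diagonal shift $(\nu_1,\nu_2)\mapsto(\nu_1+8,\nu_2+8)$. Because $-\zeta_{16}^\alpha=\zeta_{16}^{\alpha+8}$, pairing elements of $M$ by multiplication by $-1$ recovers the unordered pair $\{[\nu_1]_8,[\nu_2]_8\}$ in $\mbbZ_8^\times$, i.e.\ the class of $(\nu_1,\nu_2)$ modulo \emph{independent} $+8$ shifts in each coordinate. Reading off $\nu_1+\nu_2\pmod{16}$ from $t$ then cuts this four-fold ambiguity down to the diagonal shift, producing exactly the $\Gamma$-orbit.

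A Burnside (or direct) count then yields precisely $20$ orbits of $\Gamma$ on $(\mbbZ_{16}^\times)^2$: among the $36$ unordered pairs, the four antidiagonal pairs $\{1,9\},\{3,11\},\{5,13\},\{7,15\}$ are fixed by the diagonal shift and contribute $4$ singleton orbits, while the remaining $32$ pair up into $16$ orbits of size two. Combining with the surjection from $(\nu_1,\nu_2)$ onto the $20$ equivalence classes furnished by Proposition \ref{prop:Ising2}, the induced map from $\Gamma$-orbits to classes must be a bijection. Hence distinct equivalence classes of $\Ising^{\nu_1}\boxtimes\Ising^{\nu_2}$ produce distinct $T$-matrix orbits, and \emph{a fortiori} distinct modular data.

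The main subtlety is the orbit bookkeeping in the third step, where one must verify that $(M,t)$ together recover exactly the $\Gamma$-orbit---neither anything finer (which would manufacture more than $20$ $T$-matrix orbits, contradicting Proposition \ref{prop:Ising2}) nor anything coarser. The remainder is a mechanical comparison of orbit and class totals.
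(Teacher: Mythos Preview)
Your proposal is correct and follows essentially the same strategy as the paper: both write down the $T$-matrix of $\Ising^{\nu_1}\boxtimes\Ising^{\nu_2}$, count $T$-matrices up to dimension-preserving permutation to obtain $20$, and then invoke Proposition~\ref{prop:Ising2} to conclude. Your packaging via the invariant $(M,t)$ and the explicit $\Gamma\cong\mbbZ_2\times\mbbZ_2$ action is somewhat cleaner than the paper's route through the claim that a dimension- and twist-preserving bijection $F$ is determined by $F(\sigma\boxtimes\one)$, but the underlying orbit computation and the final comparison with the count of $20$ are identical.
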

\begin{proof}
 Any equivalence of modular categories $\widehat F :\Ising^{\nu_1}\boxtimes\Ising^{\nu_2} \to \Ising^{\nu_1'}\boxtimes\Ising^{\nu_2'}$  induces a dimension- and twist-preserving map  $F:\Irr(\Ising^{\nu_1}\boxtimes\Ising^{\nu_2}) \to \Irr(\Ising^{\nu_1'}\boxtimes\Ising^{\nu_2'})$ on the sets of isomorphism classes of simple objects.  Thus, by identifying corresponding simple objects in the domain and codomain of $F$, the $T$-matrices of equivalent modular categories are related by dimension-preserving permutations of their basis, the set of isomorphism classes of simple objects. Thus, a lower bound for a count of inequivalent categories is given by counting $T$-matrices up to dimension-preserving permutation. 

To make this count, we use the following table of dimensions and twists of simple objects of $\Ising^{\nu_1} \boxtimes \Ising^{\nu_2}$:

$$
    \begin{tabular}{>{$}l<{$}|*{3}{>{$}l<{$}}}
    (\dim_{X \boxtimes Y}, \theta_{X \boxtimes Y})  & \one  & \psi   & \sigma  \\
    \hline\vrule height 12pt width 0pt
    \one       & (1,1)            &  (1, -1)       & (\sqrt{2}, e^\frac{\pi i \nu_2}{8}) \\
    \psi    & (1, -1)          &  (1, 1)         & (\sqrt{2}, -e^\frac{\pi i \nu_2}{8})\\
    \sigma  & (\sqrt{2}, e^\frac{\pi i \nu_1}{8})   & (\sqrt{2}, -e^\frac{\pi i \nu_1}{8})  & (2, e^\frac{\pi i (\nu_1 + \nu_2)}{8})  \\
    \end{tabular} 
$$

We claim that the map $F:\Irr(\Ising^{\nu_1}\boxtimes\Ising^{\nu_2}) \to \Irr(\Ising^{\nu_1'}\boxtimes\Ising^{\nu_2'})$ is determined by $F(\sigma \boxtimes \one)$. Since $F$ must preserve simple objects of dimension $\sqrt{2}$, we have $F(\sigma \boxtimes \one)  \in \{\sigma \boxtimes \one, \one \boxtimes \sigma, \psi \boxtimes \sigma, \sigma \boxtimes \psi\}$.   Thus, $F((\psi \oplus \one) \boxtimes \one) =  F(\sigma^{\otimes 2} \boxtimes \one)  =  (F(\sigma \boxtimes \one))^{\otimes 2} \in \{(\psi \oplus \one) \boxtimes \one, \one \boxtimes (\psi \oplus \one) \}$. Hence,  $F(\sigma \boxtimes \one)$ determines $F(\psi \boxtimes \one) \in \{\psi \boxtimes \one, \one \boxtimes \psi\}$. The value of $F(\psi \boxtimes \one)$ also determines $F(\one \boxtimes \psi)$ since these are the only fermions.  Hence, $F(\sigma \boxtimes \one)$ also determines $F(\sigma \boxtimes \psi)$.  Since $F$ must preserve the unique simple object of dimension $2$, we have $\nu_1' + \nu_2' = \nu_1 + \nu_2 \pmod{16}$.  Since the twist of $F(\sigma \boxtimes \one)$ is $\nu_1 \in \{\nu_i', \nu_i' + 8\}$ for some $i \in \{1,2\}$, this equation determines $\nu_2 \in \{\nu_{3-i}', \nu_{3-i}' + 8\}$, hence the value of $F(\one \boxtimes \sigma)$.  The other simple objects are generated by these values, proving the claim.

We thus have the following count of $T$-matrices up to dimension-preserving permutation.  There are $8 \cdot 8 = 64$ ordered pairs $(\nu_1, \nu_2)$.  
This splits into $56$ pairs of distinct $(\nu_1, \nu_2)$ and $8$ pairs of the
form $(\nu, \nu)$. Since $\nu$ is odd, the orbit of $(\nu,\nu)$ under dimension- and twist-preserving permutation is $\{(\nu,\nu), (\nu + 8, \nu + 8)\}$ of order $2$.   The $56$ pairs of distinct $(\nu_1, \nu_2)$ further break up into $8$ pairs such that $\nu_1 + 8 = \nu_2$ and $48$ pairs such that  $\nu_1 + 8 \neq \nu_2$.  In the case that $\nu_1 + 8 = \nu_2$, the orbit of $(\nu_1, \nu_2)$ is $\{(\nu_1, \nu_2), (\nu_2, \nu_1)\}$ of order $2$. In the case that $\nu_1 + 8 \neq \nu_2$, the orbit is $\{(\nu_1, \nu_2), (\nu_2, \nu_1), (\nu_1 + 8, \nu_2 + 8), (\nu_2 + 8, \nu_1 + 8)\}$ of order $4$.  Thus, we have
$8/2 + 8/2 + 48/4 = 20$ distinct $T$-matrices up to dimension-preserving permutation.  In light of Proposition \ref{prop:Ising2}, this implies that the choice of $T$-matrix determines the category.
\end{proof}

More generally, we would like to know if the modular data distinguishes inequivalent metaplectic modular categories of arbitrary dimension.  A first step in this direction would be to determine the modular data:
\begin{question}
  \label{Question: DefectTwists}
  What are the twists of each metaplectic modular category of dimension $4N$ in terms of the twists of the pointed modular category $\mcC(\mbbZ_N,q)$?
\end{question}

  Formula (412) of \cite{CGPW1} conjecturally relates twists in the
  gauged theory to twists in the extension.  This still leaves the problem
  of finding the twists of the defects in the extension.  As far as we can
  tell, the only way to do this is to solve the $G$-crossed heptagon
  equations.

\section{Modular categories of dimension $16m$, with $m$ odd square-free integer}
\label{16m}

In this section we will consider $\mathcal{C}$ a strictly weakly-integral modular category of dimension $16m$, with $m>1$ an odd square-free integer. We will fix this assumptions for all Section \ref{16m} unless otherwise specified. We will make a comment on the case $m=1$, see Remark \ref{16m_integral} and on the integral case in the discussion below. 

We have seen examples, namely metaplectic modular categories of dimension $4N$ with $N=4m$ with $m$ square-free and odd. The goal is to give a classification of this class of categories similar to the ones in \cite{BGNPRW1,BPR} of modular categories of dimension $4m$ and $8m$, respectively. In what follows, we give a classification of modular categories of dimension $16m$ under certain restrictions.

The only \emph{integral} metaplectic modular categories of dimension $16m$ with $m$ square-free are those with $m=1$.  We will disregard this case, pausing only to ask:
\begin{question}
  \label{Question: 16m integral}
 Are integral modular categories of dimension $16m$ with $m > 1$ and not necessary square-free, pointed?
\end{question}
\begin{rmk}\label{16m_integral}
The only integral modular categories of dimension $16m$ are for $m=1$, i.e. dimension $16$. But there are strictly weakly-integral modular categories of dimension $16$ as well, for example $\Ising\boxtimes \Ising$.
\end{rmk}
\begin{rmk} While \textit{integral} modular categories of dimension $16$ are pointed (\cite[Lemma 4.3]{BPR}), it is not the case that all integral modular categories of  dimension $2^{k}$ are pointed. For example $SO(8)_2$ is a non-pointed integral modular category of dimension $32$, which has $4$ invertible objects and $7$ simple objects of dimension $2$. 
\end{rmk} 


We can consider $\mathcal C$ \textit{prime} (i.e. not of the form $\mcC_1\boxtimes\mcC_2$) since otherwise it reduces to known cases, see \cite{BGNPRW1,BPR}.  

Prime modular categories of dimension $4m$ and $8m$ essentially arise from metaplectic categories when they are not pointed. Thus it is natural to ask:

\begin{question}
  \label{Question: 16m}
 Are prime modular categories of dimension $16m$ Grothendieck equivalent to $\SO\paren{4m}_{2}$?
 \end{question}
Strictly weakly integral modular categories of dimension $16$ were classified in \cite[Lemma 4.9]{BPR}. These categories are a Deligne product of an Ising category and a pointed modular category, i.e. a Generalized Tamabra-Yamagami category. 

\subsection{The order of $U(\mathcal{C})$}

From \cite[Lemma 4.1]{BPR}, we know that if $16$ divides the order of the universal grading group $U(\mathcal C)$ of $\mathcal{C}$, then $\mathcal{C}$ is pointed. So we can assume that $16 \nmid  \left|U(\mathcal{C}) \right|$.

As in the proof of Theorem 3.1 in \cite{BGNPRW1}, using the de-equivariantization process, we can assume that $\left| U(\mathcal{C}) \right| = 2^k$, for $k$ an integer number.

We will not provide a complete answer to Question \ref{Question: 16m}, but we will give a partial response and pose some general questions arising from our analysis.

\begin{remark}\label{dim in Cint} Let $\mcC$ have dimension $2^nm$, with $n\geq 3$. Then, the possible dimensions of integral simple objects of $\mcC$ are $2^k$, with $k = 0, \ldots, \lfloor \frac{n}{2} \rfloor$ (see e.g. \cite[Lemma 5.2]{DN}, \cite[Theorem 2.11 (i)]{ENO2}).
\end{remark}

\begin{lemma}\label{Lemma: universal grading lower bound}
  Let $\mcC$ have dimension $2^nm$, with $n\geq 3$. Then the universal grading group $\mcU(\mcC)$ has
   $|\mcU(\mcC)|\geq 4$.
\end{lemma}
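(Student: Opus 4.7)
The plan is to prove the contrapositive: assume $|\mcU(\mcC)| \leq 3$ and derive a contradiction. Since $\mcC$ is modular, $\mcU(\mcC)$ is canonically isomorphic to the character group of $G(\mcC)$, so $|\mcU(\mcC)| = |G(\mcC)|$; it therefore suffices to rule out $|G(\mcC)| \in \{1,2,3\}$. The GN grading group $E$ of $\mcC$ is an elementary abelian $2$-group, and as a faithful grading it is a quotient of $\mcU(\mcC)$, so $|E|$ divides $|\mcU(\mcC)|$ and is a power of $2$. Under the standing hypothesis of Section \ref{16m}, $\mcC$ is strictly weakly integral, which forces $|E| \geq 2$.

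If $|\mcU(\mcC)| \in \{1,3\}$, then $|E|$ must be both a power of $2$ dividing $|\mcU(\mcC)|$ and at least $2$, which is impossible. The remaining case $|\mcU(\mcC)| = 2$ forces $|E| = 2$, so the surjection $\mcU(\mcC) \twoheadrightarrow E$ between groups of order $2$ is an isomorphism. Hence the trivial components of the universal and GN gradings coincide: $\mcC_{\intcat} = \mcC_{ad}$, which has dimension $\dim(\mcC)/2 = 2^{n-1}m$ and is integral. By Remark \ref{dim in Cint}, every simple object of $\mcC_{\intcat}$ has dimension a power of $2$. Letting $a_0 \in \{1,2\}$ denote the number of invertible simples in $\mcC_{\intcat}$, and writing the remaining simples' dimensions as $2^{k_i}$ with $k_i \geq 1$, I get
$$
2^{n-1}m = a_0 + \sum_i 4^{k_i}.
$$
For $n \geq 3$ the left-hand side is divisible by $4$, and each $4^{k_i}$ on the right is divisible by $4$, so $4$ divides $a_0$; this contradicts $a_0 \in \{1,2\}$.

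The main obstacle is matching the grading components in the case $|G(\mcC)| = 2$: once one observes that $|\mcU(\mcC)| = |E| = 2$ forces the identification $\mcC_{\intcat} = \mcC_{ad}$, and hence bounds the number of invertibles in $\mcC_{\intcat}$ by $|G(\mcC)| = 2$, the remainder is a short parity calculation. The cases $|G(\mcC)| \in \{1,3\}$ are immediate from the general principle that the GN grading group is a $2$-group quotient of the universal grading.
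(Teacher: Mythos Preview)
Your proof is correct and follows essentially the same approach as the paper's own proof: reduce to the case $|\mcU(\mcC)|=2$, identify $\mcC_{ad}=\mcC_{\intcat}$ of dimension $2^{n-1}m$, and derive a contradiction modulo $4$ using Remark~\ref{dim in Cint}. The only difference is cosmetic: the paper relies on the ambient reduction $|\mcU(\mcC)|=2^k$ to dismiss the odd cases, whereas you handle $|\mcU(\mcC)|\in\{1,3\}$ directly via the GN grading being a nontrivial $2$-group quotient, and the paper plugs in $a_0=2$ immediately while you bound $a_0\leq 2$.
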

\begin{proof} Since $\mcU(\mcC)$ is non-trivial and the GN grading group is also non-trivial, it is enough to show that $\mcU(\mcC)\not\cong\mbbZ_2$.
Suppose that $\mcC$ is as in the statement with $\mcU(\mcC)\cong\mbbZ_2$. Notice that the GN-grading group is also $\mathbb Z_2$. By faithfulness of the universal
  grading we have $\dim \mcC_{ad}= 2^{n-1}m$ (where $n\geq 3$). 
   From the definition of dimension and Remark \ref{dim in Cint}, we have $2^{n-1}m=2+\sum_{k = 1}^{\lfloor \frac{n}{2} \rfloor} 2^{2k}a_k$, where $a_k$ is the number of $2^k$-dimensional simple objects in $\mcC_{ad}$. Since $n>3$ and $4\nmid 2$ we have a contradiction.
\end{proof}

\begin{lemma}\label{Lemma: universal grading upper bound}
Let $n\geq 3$. There are no prime self-dual strictly weakly integral modular categories of dimension $2^nm$ whose universal
grading group has order $2^{n-1}$.
\end{lemma}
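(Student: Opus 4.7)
The plan is to argue by contradiction: if such a $\mcC$ existed, I would extract a non-trivial proper pointed modular subcategory of $\mcC$ and then apply Müger's factorization theorem to violate primality.

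First I would pin down the order of the Müger center of $\mcC_{pt}$. By faithfulness of the universal grading, $\dim\mcC_{\ad}=\dim(\mcC)/|\mcU(\mcC)|=2m$. Since $\mcC$ is modular we have $\mcC_{\ad}'=\mcC_{pt}$, so $\mcC_{\ad}\cap\mcC_{pt}$ coincides with the M\"uger center of $\mcC_{pt}$ and, being a subcategory of $\mcC_{pt}$, has $2$-power cardinality. Because $\mcC_{\ad}\subseteq\mcC_{\intcat}$ is integral, Remark \ref{dim in Cint} forces each simple object of $\mcC_{\ad}$ to have dimension $2^{k}$ for some $k\ge 0$. Letting $a_k$ be the number of simples of dimension $2^k$, I would write
$$2m = a_0 + 4a_1 + 16a_2 + \cdots,$$
with $a_0 = |\mcC_{\ad}\cap\mcC_{pt}|$ a power of $2$. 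Reducing modulo $4$ and using that $m$ is odd gives $a_0 \equiv 2 \pmod 4$, so $a_0 = 2$. Thus there is exactly one non-trivial invertible $g\in\mcC_{\ad}\cap\mcC_{pt}$.

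Next I would use self-duality to split $\mcC_{pt}$. Every invertible object $h$ is self-dual and so $h\ot h\cong\one$, forcing $G(\mcC)$ to be elementary abelian of order $2^{n-1}$; write $\mcC_{pt}\cong\mcC((\mbbZ_2)^{n-1},q)$ with associated bilinear form $\sigma_q$. The M\"uger center of $\mcC_{pt}$ corresponds to $\mathrm{rad}(\sigma_q)$, which by Step 1 equals $\langle g\rangle$. Since $(\mbbZ_2)^{n-1}$ is elementary abelian, I can choose a complement $H\le(\mbbZ_2)^{n-1}$ with $(\mbbZ_2)^{n-1}=H\oplus\langle g\rangle$. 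A brief check shows $\sigma_q|_H$ is non-degenerate: if $h\in H$ satisfies $\sigma_q(h,h')=0$ for all $h'\in H$, then $\sigma_q(h,g)=0$ automatically gives $h\in\mathrm{rad}(\sigma_q)\cap H=\langle g\rangle\cap H=0$. Therefore $\mcC(H,q|_H)$ is a pointed modular fusion subcategory of $\mcC_{pt}\subseteq\mcC$ of dimension $2^{n-2}$.

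Finally, since $n\ge 3$ we have $\dim\mcC(H,q|_H)=2^{n-2}\ge 2$, so $\mcC(H,q|_H)$ is a non-trivial proper modular subcategory. M\"uger's factorization theorem then gives $\mcC\cong\mcC(H,q|_H)\boxtimes\mcZ_{\mcC}(\mcC(H,q|_H))$, contradicting primality. The main obstacle I expect is Step 1: isolating $|\mcC_{\ad}\cap\mcC_{pt}|=2$ requires combining the integrality of $\mcC_{\ad}$ (with the $2^k$-restriction on dimensions), the fact that a subgroup of $G(\mcC)$ has $2$-power order, and the oddness of $m$, in a single mod-$4$ identity. Once that is established, self-duality upgrades $G(\mcC)$ to an elementary abelian $2$-group, making the Lagrangian-style splitting in Step 2 elementary and reducing Step 3 to a direct application of M\"uger's theorem.
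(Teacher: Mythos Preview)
Your proof is correct and takes a genuinely different route from the paper's.

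Both arguments share the same first step: using $\dim\mcC_{\ad}=2m$, the $2^k$-restriction on dimensions of integral simples (Remark~\ref{dim in Cint}), and the oddness of $m$ to conclude that $(\mcC_{\ad})_{pt}$ has exactly two elements. From there the paths diverge. The paper picks a non-integral graded component $\mcC_g$, forms the fusion subcategory $\mcD=\mcC_{\ad}\oplus\mcC_g$ of dimension $4m$, and runs a three-case analysis on its M\"uger center $\mcZ_{\mcD}(\mcD)\subset(\mcC_{\ad})_{pt}$: the case $\Vec$ contradicts primality directly; the case $\sVec$ is excluded by M\"uger's lemma (a transparent object fixing a simple cannot be a fermion); and the case $\Rep(\mbbZ_2)$ is handled by de-equivariantizing to a $2m$-dimensional modular category, which must be pointed, forcing $\mcD$ to be integral. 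Your argument instead stays entirely inside $\mcC_{pt}$: once you know its M\"uger center (equivalently, the radical of $\sigma_q$) is $\langle g\rangle$ of order $2$, self-duality makes $G(\mcC)\cong(\mbbZ_2)^{n-1}$, and any linear complement $H$ to $\langle g\rangle$ automatically carries a non-degenerate restricted form, yielding a pointed modular subcategory of dimension $2^{n-2}\geq 2$ and an immediate contradiction with primality via M\"uger factorization.

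Your approach is shorter and more elementary: it avoids the case split, M\"uger's fermion lemma, and de-equivariantization entirely, trading them for a one-line bilinear-algebra observation over $\mbbF_2$. It also does not use the existence of a non-invertible simple in $\mcC_{\ad}$, so it does not rely on $m>1$ at that point. The paper's approach, on the other hand, engages the non-integral part of $\mcC$ and is closer in spirit to the techniques used elsewhere in Section~\ref{16m}.
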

\begin{proof}
By contradiction, assume that such a category exists.  Since $\mcC$ is self-dual, we must have that  $\mcU\paren{\mcC}\cong\mbbZ_{2}^{n-1}$. So let $\mcC_{g}$ be a non-integral component. Then $\mcC_{g}\otimes \mcC_{g}\subset\mcC_{ad}$. Thus
  $\mcD:=\mcC_{ad}\oplus\mcC_{g}$ is a fusion category of dimension $4m$. Taking
  centralizers we have
  $\mcZ_{\mcD}\paren{\mcD}\subset\mcZ_{\mcD}\paren{\mcC_{ad}}=\paren{\mcC_{ad}}_{pt}$.

  Since $\mcC_{ad}$ has dimension $2m$, we have $2m=\sum_{k = 0}^{\lfloor \frac{n}{2} \rfloor} 2^{2k}a_k$, where $a_k$ is the number of $2^k$-dimensional simple objects in $\mcC_{ad}$, by Remark \ref{dim in Cint}.
  Thus $a_{0}=2$ because $m$ is odd.
  So either $\mcZ_{\mcD}\paren{\mcC_{ad}\oplus\mcC_{g}}\cong \Vec$, $\sVec$, or
  $\Rep\paren{\mbbZ_{2}}$.

  The case $\mcZ_{\mcD}\paren{\mcD}\cong \Vec$ is not possible as $\mcC$ is prime.  Since there are only $2$ invertible objects in $\mcC_{ad}$ and all non-invertibles objecs in $\mcC_{ad}$ are even dimensional, then they should be fixed by the non-trivial invertible object in $\mcC_{ad}$, by \ref{formula Xotimesdual}. Since the invertible objects are transparent in $\mcC_{ad}$, the non-trivial invertible object can not be a fermion, i.e. $\mcZ_{\mcD}\paren{\mcD}\ncong \sVec$. If $\mcZ_{\mcD}\paren{\mcD}\cong \Rep\paren{\mbbZ_{2}}$, we can
  de-equivariantize to find a $2m$-dimensional modular category. Thus
  $\mcD_{\mbbZ_{2}}$ is pointed (by \cite[Theorem 2.11 (i)]{ENO2}) and hence $\mcD$ is integral, an impossibility.
\end{proof}

\begin{prop}\label{universal grading group order 4}
 If $\mcC$ is prime and of dimension $16m$, then its universal grading group has order $4$. Thus, $\FPdim \mcC_{pt}=4$.
\end{prop}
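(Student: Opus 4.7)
The plan is a contradiction argument combining the preceding two lemmas with divisibility.

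After the earlier reduction to $|\mcU(\mcC)|=2^{k}$, the identity $|\mcU(\mcC)|\cdot\FPdim\mcC_{ad}=16m$ with $m$ odd forces $|\mcU(\mcC)|$ to divide $16$. Lemma~\ref{Lemma: universal grading lower bound} then gives $|\mcU(\mcC)|\geq 4$, and \cite[Lemma~4.1]{BPR} (cited at the start of this section) excludes $|\mcU(\mcC)|=16$ since $\mcC$ is non-pointed. Hence $|\mcU(\mcC)|\in\{4,8\}$, and I only need to rule out $|\mcU(\mcC)|=8$.

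Suppose $|\mcU(\mcC)|=8$, so that $\FPdim\mcC_{pt}=8$ and $\FPdim\mcC_{ad}=2m$. The subcategory $\mcC_{pt}\cap\mcC_{ad}=\mcC_{pt}\cap\mcC_{pt}'$ is pointed, and its dimension divides $\gcd(8,2m)=2$. If this dimension were $1$, then $\mcC_{pt}$ would be non-degenerate and M\"uger's factorization theorem would give $\mcC\simeq\mcC_{pt}\boxtimes\mcC_{ad}$, contradicting primality of $\mcC$. Hence $\dim(\mcC_{pt}\cap\mcC_{ad})=2$, so it is either $\Rep(\mbbZ_2)$ or $\sVec$, and $(\mcC_{ad})_{pt}$ consists of exactly two invertibles.

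In the self-dual case, where $G(\mcC)$ has exponent $2$ and hence $\mcU(\mcC)\cong\mbbZ_2^{3}$, Lemma~\ref{Lemma: universal grading upper bound} applied with $n=4$ and $2^{n-1}=8$ yields the contradiction immediately. For $\mcU(\mcC)\in\{\mbbZ_8,\mbbZ_4\times\mbbZ_2\}$ I would mirror the proof of Lemma~\ref{Lemma: universal grading upper bound}: select an order-$2$ element $g\in\mcU(\mcC)$ with $\mcC_g$ non-integral so that $\mcD:=\mcC_{ad}\oplus\mcC_g$ is a fusion subcategory of dimension $4m$, then analyze $\mcZ_{\mcD}(\mcD)\subseteq(\mcC_{ad})_{pt}$, ruling out $\Vec$ by primality, $\sVec$ via the same argument as in Lemma~\ref{Lemma: universal grading upper bound} that a transparent invertible in $\mcC_{ad}$ fixing all $2$-dimensional simples cannot be a fermion, and $\Rep(\mbbZ_2)$ by modularizing $\mcD$ and applying \cite[Theorem~2.11(i)]{ENO2}. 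The main obstacle is exhibiting such a $g$ when $\mcU(\mcC)\cong\mbbZ_8$, whose unique order-$2$ element lies in the integral subcategory $\mcC_\intcat$; in that subcase one instead studies $\mcC_\intcat$ directly---a premodular integral category of dimension $8m$ with M\"uger center of order $2$---modularizes (when this M\"uger center is Tannakian) to a modular category of dimension $4m$, and uses the classification from \cite{BGNPRW1} to derive a contradiction. Having ruled out $|\mcU(\mcC)|=8$ in every case, we conclude $|\mcU(\mcC)|=4$, and therefore $\FPdim\mcC_{pt}=|G(\mcC)|=4$.
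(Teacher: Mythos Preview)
Your core argument—combining the standing reductions ($|\mcU(\mcC)|=2^k$ and $16\nmid|\mcU(\mcC)|$) with Lemma~\ref{Lemma: universal grading lower bound} and, in the self-dual case, Lemma~\ref{Lemma: universal grading upper bound}—is exactly the paper's proof, which reads in full: ``This statement is a direct consequence of Lemma~\ref{Lemma: universal grading lower bound} and Lemma~\ref{Lemma: universal grading upper bound}.''  You are right to notice that Lemma~\ref{Lemma: universal grading upper bound} carries a self-duality hypothesis that the proposition does not; the paper does not address this, and presumably the authors either tacitly assume self-duality here (the next subsection restricts to that case anyway) or regard the extension as routine.

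Your extra work on the non-self-dual cases, however, has genuine gaps.  For $\mcU(\mcC)\cong\mbbZ_4\times\mbbZ_2$ you assert one can always pick an order-$2$ element $g$ with $\mcC_g$ non-integral, but this fails when the GN-grading is $\mbbZ_2$ and $\mcC_{\intcat}$ is graded by the subgroup $\{0,2\}\times\mbbZ_2\cong\mbbZ_2\times\mbbZ_2$: then all three order-$2$ elements of $\mcU(\mcC)$ lie in the integral part, and nothing you have written excludes this configuration.  For $\mcU(\mcC)\cong\mbbZ_8$ you correctly identify the obstacle but the proposed workaround is too thin: you modularize $\mcC_{\intcat}$ only ``when this M\"uger center is Tannakian'' with no treatment of the $\sVec$ case, and ``uses the classification from \cite{BGNPRW1} to derive a contradiction'' is a gesture, not an argument.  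In short, your proof is complete exactly where the paper's is, and incomplete where the paper is silent.
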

\begin{proof}
This statement is a direct consequence of Lemma \ref{Lemma: universal grading lower bound} and Lemma \ref{Lemma: universal grading upper bound}.
\end{proof}

\subsection{Self-dual and GN-grading by $\mathbb Z_2$ case}

We will focus on the self-dual case. Thus, it suffices to consider strictly weakly integral self-dual categories of dimension $16m$ whose universal grading group has order $4$.

\begin{lemma}
Let  $\mcC$ have dimension $2^nm$, with $n\geq 4$. If $\dim\mcC_{pt}=4$, then
$\mcZ_{\mcC_{ad}}\paren{\mcC_{ad}}=\mcC_{pt}$, and $\mcC_{pt}$ contains a
boson.
\end{lemma}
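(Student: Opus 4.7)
The plan is to deduce both assertions from a single containment, $\mcC_{pt}\subseteq\mcC_{ad}$, which simultaneously identifies the centralizer and forces $\mcC_{pt}$ to be symmetric (hence to contain a boson). First I would unpack the left-hand side of the centralizer equality: since $\mcC$ is modular, \cite{GN2} gives $\mcZ_\mcC(\mcC_{ad})=\mcC_{pt}$, and therefore
\[
\mcZ_{\mcC_{ad}}(\mcC_{ad})\;=\;\mcC_{ad}\cap\mcZ_\mcC(\mcC_{ad})\;=\;\mcC_{ad}\cap\mcC_{pt},
\]
the pointed subcategory whose invertibles form the group $G(\mcC_{ad})$. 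So the first assertion reduces to the equality $G(\mcC_{ad})=G(\mcC)$, i.e.\ to the containment $\mcC_{pt}\subseteq\mcC_{ad}$.

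Establishing this containment is the main obstacle, and I would handle it by a parity count. Because the GN-grading is faithful, it is a quotient of the universal grading, so the trivial universal component $\mcC_{ad}$ is contained in the trivial GN-component $\mcC_{\intcat}$ and is therefore integral. By Remark~\ref{dim in Cint} every simple object of $\mcC_{ad}$ has dimension of the form $2^k$ with $k\geq 0$, so
\[
\dim\mcC_{ad}\;=\;|G(\mcC_{ad})|\;+\;\sum_{k\geq 1}2^{2k}a_k\;\equiv\;|G(\mcC_{ad})|\pmod{4},
\]
where $a_k$ counts the $2^k$-dimensional simples. But $\dim\mcC_{ad}=2^{n-2}m\equiv 0\pmod{4}$ since $n\geq 4$ and $m$ is odd, while $|G(\mcC_{ad})|\leq|G(\mcC)|=4$, so we must have $|G(\mcC_{ad})|=4$ and hence $\mcC_{pt}\subseteq\mcC_{ad}$.

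Finally, I would exploit the symmetry just produced to locate a boson. The containment yields $\mcC_{pt}\subseteq\mcZ_\mcC(\mcC_{pt})$, so $\mcC_{pt}\cong\mcC(G,q)$ is a symmetric pointed braided fusion category with $|G|=4$. Symmetry forces the bilinear form $B(a,b)=q(a+b)-q(a)-q(b)$ to vanish in $\mbbQ/\mbbZ$, so $q\colon G\to\mbbQ/\mbbZ$ is a homomorphism; combined with $q(-a)=q(a)$ this gives $2q=0$, whence every twist $\theta_a=e^{2\pi iq(a)}$ lies in $\{\pm 1\}$ and the assignment $a\mapsto\theta_a$ is a group homomorphism $G\to\{\pm 1\}$. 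Its kernel has order at least $|G|/2=2$. If $G\cong\mbbZ_4$ the unique element of order $2$ automatically satisfies $\theta(2)=\theta(1)^2=1$ and is a nontrivial boson; if $G\cong\mbbZ_2\times\mbbZ_2$ every nontrivial element of $\ker\theta$ is an order-$2$ element with trivial twist, again producing a boson in $\mcC_{pt}$.
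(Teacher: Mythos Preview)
Your proof is correct. The first half---the parity count establishing $\mcC_{pt}\subseteq\mcC_{ad}$ and hence $\mcZ_{\mcC_{ad}}(\mcC_{ad})=\mcC_{pt}$---matches the paper's argument essentially verbatim.

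For the boson, you take a genuinely different route. The paper picks a non-invertible simple $X\in\mcC_{ad}$ (which exists because $m>1$), observes via Remark~\ref{formula Xotimesdual} that the stabilizer group $G[X]$ of invertibles fixing $X$ has even order, and then invokes M\"uger's lemma \cite[Lemma~5.4]{M4} that a transparent fermion cannot fix a simple object; this forces a nontrivial element of $G[X]$ to be a boson. You instead argue purely from the fact that $\mcC_{pt}$ is symmetric of order~$4$: the twist becomes a homomorphism into $\{\pm 1\}$, and a kernel of size at least~$2$ supplies the boson. Your argument is more self-contained (it needs neither $m>1$ nor the cited lemma of M\"uger), while the paper's argument has the minor advantage of exhibiting a boson that already fixes a specified $2$-dimensional simple, which is how the boson is used downstream.
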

\begin{proof}
  First note that $\dim\mcC_{ad}=2^{n-2}m$. So the dimensions of the simple objects of $\mcC_{ad}$ are of the form $2^k$, $k= 0, \ldots, \lfloor \frac{n}{2} \rfloor$, by Remark \ref{dim in Cint}. Thus $2^{n-2}m=\sum_{k = 0}^{\lfloor \frac{n}{2} \rfloor} 2^{2k}a_k$. Since $n\geq 4$, then $4\mid a_0$. Moreover $a_{0}=4$ because
  $\one\in\mcC_{ad}$. Thus $\mcC_{pt}\subset\mcC_{ad}$. On
  the other hand,
  $\mcZ_{\mcC_{ad}}\paren{\mcC_{ad}}=\mcZ_{\mcC}\paren{\mcC_{ad}}\cap\mcC_{ad}=\mcC_{pt}\cap\mcC_{ad}=\mcC_{pt}$.

  Now consider $X$ a non-invertible simple object in $\mcC_{ad}$ (which exists since $m>1$). 
  Recall that $2\mid \FPdim Y$, for all non-invertible simple object $Y\in \mcC_{ad}$. Then, by Remark \ref{formula Xotimesdual}, we have that $2$ divides the order of the group $G[X]$ of invertible objects that fixes $X$. Since $\one \in G[X]$, then $G[X] \geq 2$
  Then, there exists $g\in G[X] \subseteq \mcC_{pt} = \mcZ_{\mcC_{ad}}\paren{\mcC_{ad}}$ of order $2$. Since $g$ fixes $X$ and $g\in\mcC_{ad}^\prime=\mcC_{pt}$ then $g$ is not a fermion (see \cite[Lemma 5.4]{M4}), and hence $g$ must be a
  boson.
\end{proof}

\begin{lemma}
  \label{Lemma: All 2-dimensionals}
  If $\mcC$ has dimension $16m$ with $\dim\mcC_{pt}=4$ and
  GN-grading $\mbbZ_{2}$, then the simple objects in the integral non-adjoint component are all $2$-dimensional. Moreover, all the non-invertible simple objects in $\mcC_{ad}$ are also $2$-dimensional.
\end{lemma}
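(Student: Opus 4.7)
The plan is to exclude $4$-dimensional simples in the integral fusion subcategory $\mcC_{\intcat} = \mcC_{ad}\oplus\mcC_h$ of dimension $8m$, where $\mcC_h$ denotes the non-adjoint integral component. Since Remark \ref{dim in Cint} restricts the dimensions of integral simples of $\mcC$ to $\{1,2,4\}$ and since $\mcC_h$ contains no invertibles (the four invertibles of $\mcC$ lie in $\mcC_{pt}\subseteq\mcC_{ad}$), this single exclusion establishes both assertions of the lemma.

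I would first determine the M\"uger center of $\mcC_{\intcat}$: any object centralizing $\mcC_{\intcat}$ centralizes $\mcC_{ad}$, so it lies in $\mcC_{ad}'\cap\mcC_{\intcat}=\mcC_{pt}$. Parameterizing each simple $X\in\mcC_h$ by its universal-grading character $\chi_h\in\widehat{\mcC_{pt}}$ (so that $S_{y,X}=\chi_h(y)\dim(X)$ for $y\in\mcC_{pt}$), the condition that $y$ centralize all of $\mcC_h$ is precisely $\chi_h(y)=1$, giving $\mcZ_{\mcC_{\intcat}}(\mcC_{\intcat})=\ker(\chi_h)$ of dimension $2$. In the generic Tannakian case, modularizing $\mcC_{\intcat}$ by $\ker(\chi_h)\cong\Rep(\mbbZ_2)$ yields an integral modular category $\mcM$ of dimension $4m$; the classification of integral modular categories of such dimension with $m$ odd square-free (\cite{BGNPRW1}) forces $\mcM$ to be pointed, so $\mcM\cong\mcC(A,q)$ with $|A|=4m$. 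Recovering $\mcC_{\intcat}\cong\mcM^{\mbbZ_2}$ as an equivariantization, the only nontrivial braided autoequivalence of $\mcM$ is particle-hole inversion (by the proposition preceding this section), and a trivial action would yield $2|A|=8m$ invertibles in $\mcC_{\intcat}$, contradicting $|G(\mcC_{\intcat})|=4$. Hence the action is particle-hole inversion; its orbits have size at most $2$, so no $4$-dimensional simple arises, and matching $|G(\mcC_{\intcat})|=4$ against twice the number of $2$-torsion elements of $A$ forces $A\cong\mbbZ_{4m}$. This produces exactly $4$ invertibles and $2m-1$ simples of dimension $2$, which split as $m-1$ in $\mcC_{ad}$ and $m$ in $\mcC_h$ by matching $\dim\mcC_{ad}=\dim\mcC_h=4m$.

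The main obstacle is the non-Tannakian case, when $\ker(\chi_h)$ is generated by a fermion (which can occur only if $\mcC_{pt}\cong\Rep(\mbbZ_2\times\mbbZ_2,z)$). Here standard modularization is unavailable, so I would instead apply the Tannakian argument above to $\mcC_{ad}$ alone (whose M\"uger center $\mcC_{pt}$ still contains the Tannakian subcategory $\langle g\rangle$ generated by the boson $g$ from the previous lemma) to establish the moreover statement, and then rule out a $4$-dimensional simple $X\in\mcC_h$ via the balancing equation: such an $X$ fixed by a fermion $\psi\in\mcC_{pt}$ would satisfy $S_{\psi,X}=-\dim(X)$ and hence $\chi_h(\psi)=-1$, which together with the refined decomposition of $X\otimes X^*\in\mcC_{ad}$ afforded by the now-established moreover statement forces a parity contradiction with the structure of $\ker(\chi_h)$.
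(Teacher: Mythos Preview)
Your core strategy---identify $\mcZ_{\mcC_{\intcat}}(\mcC_{\intcat})$ as a rank-$2$ subcategory of $\mcC_{pt}$, modularize in the Tannakian case to obtain a $4m$-dimensional integral modular category which is pointed, and conclude that $\mcC_{\intcat}$ has only $1$- and $2$-dimensional simples---matches the paper's. But you introduce a case split that the paper avoids, and your non-Tannakian branch has genuine gaps.

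The paper rules out the non-Tannakian case in one step via parity: writing the non-adjoint integral component dimension as $4m = 4b_1 + 16c_1$ gives $b_1$ odd (since $m$ is odd), so tensoring by the generator $g$ of $\mcZ_\mcC(\mcC_{\intcat})$ must fix at least one $2$-dimensional simple in $\mcC_h$; a transparent fermion can fix no simple (balancing, or \cite[Lemma 5.4]{M4}), so $g$ is a boson. Your non-Tannakian argument, by contrast, does not work as written: de-equivariantizing $\mcC_{ad}$ by $\langle g\rangle$ does \emph{not} yield a modular category (the M\"uger center of $\mcC_{ad}$ is all of $\mcC_{pt}$, so the quotient has center $\sVec$), so the ``Tannakian argument above'' does not transfer, and the closing ``parity contradiction'' is too vague to evaluate. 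Separately, your appeal to the particle-hole proposition in the Tannakian case is premature (you have not shown $A$ is cyclic) and unnecessary: any $\mbbZ_2$-equivariantization of a pointed category already has simples of dimension at most $2$, which is all the lemma requires.
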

\begin{proof}
  Let $a_{0}$, $b_{0}$, and $c_{0}$ denote the number of 1-dimensional,
  2-dimensional, and 4-dimensional simples in $\mcC_{ad}$ respectively. Then
  $4m=\dim\mcC_{ad}=a_{0}+4b_{0}+16c_{0}$. In particular, $4\mid a_{0}$. Since $\one$ is in $\mcC_{ad}$ then $a_{0}>0$. Thus $a_{0}=4$. Consequently,
  $\mcC_{g}$, the integral non-adjoint component must contain only
  $2$-dimensional and $4$-dimensional simple objects.

  Moreover, $4m=4b_{1}+16c_{1}$ where $b_{1}$ and $c_{1}$ are the number of 2-
  and 4-dimensionals in the integral non-adjoint component. Since $m$ is odd we
  know that $b_{1}\neq 0$, but $m=b_{1}+4c_{1}$, so $b_{1}$ is odd.

  Since the GN-grading group and the universal grading group are not the same then $\mcC_{ad}\subsetneq\mcC_{int}$. So we have
  $\mcZ_{\mcC}\paren{\mcC_{int}}\subsetneq\mcZ_{\mcC}\paren{\mcC_{ad}}=\mcC_{pt}$.
   Notice that $\mcC_{int}$ is not modular, otherwise, by \cite[Theorem 4.2]{M1}, \cite[Theorem 3.13]{DGNO1}, the category $\mcC \cong \mcC_{int} \boxtimes \langle g\rangle$ would be integral which contradicts the assumption of $\mcC$ being strictly weakly integral. Thus $\mcZ_{\mcC}\paren{\mcC_{int}}=\langle g\rangle$ for some invertible $g$.

  We saw above that there is an odd number of 2-dimensional simple objects in the
  non-adjoint integral component. Then, at least one these $2$-dimensional objects are fixed by $g$.
  It follows from \cite[Lemma 5.4]{M4} that $g$ is a boson. Thus
  $\mcC_{int}$ is modularizable.

  So $\paren{\mcC_{int}}_{\mbbZ_{2}}$ is a $4m$-dimensional integral modular category and
  thus is pointed, by \cite[Theorem 3.1]{BGHKNNPR1}. In particular, $\mcC_{int}$
  has character degrees $1$ and $2$.
\end{proof}


\begin{prop}
  \label{Prop: Must be Spin}
  If $\mcC$ is a self-dual prime modular category of dimension $16m$ with 
  GN-grading $\mbbZ_{2}$, then $\mcC$ is spin modular.
\end{prop}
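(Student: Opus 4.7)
The plan is to proceed by contradiction, ruling out the only case not yet covered, namely that $\mcC_{pt}$ contains no fermion. Since $\mcC$ is self-dual, every invertible is self-dual, so the group $G(\mcC)$ has exponent $2$; together with $\dim \mcC_{pt} = 4$ from Proposition~\ref{universal grading group order 4}, this forces $G(\mcC) \cong \mbbZ_2 \times \mbbZ_2$. The preceding lemma identifies $\mcC_{pt}$ with the Müger center of $\mcC_{ad}$, so $\mcC_{pt}$ is symmetric, and by Deligne's classification it is either Tannakian ($\cong \Rep(\mbbZ_2 \times \mbbZ_2)$, all three non-trivial invertibles bosons) or super-Tannakian $\Rep(\mbbZ_2 \times \mbbZ_2, z)$ (one boson, two fermions). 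In the super-Tannakian case $\mcC$ inherits a distinguished fermion and is spin modular, so the remaining task is to rule out the Tannakian case.

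Assume for contradiction that $\mcC_{pt} \cong \Rep(\mbbZ_2 \times \mbbZ_2)$ is Tannakian. Since $\mcC_{pt}$ is the Müger center of $\mcC_{ad}$, the modularization $\mathcal{E} := (\mcC_{ad})_{\mbbZ_2 \times \mbbZ_2}$ is a modular category of dimension $\dim\mcC_{ad}/4 = m$. As $m > 1$ is odd and square-free, $\mathcal{E}$ is pointed (using, e.g., \cite[Theorem 3.1]{BGHKNNPR1} in combination with general dimension arguments), so $\mathcal{E} \cong \mcC(\mbbZ_m, q')$ for some non-degenerate $q'$. Reversing de-equivariantization, $\mcC_{ad} \cong \mathcal{E}^{\mbbZ_2 \times \mbbZ_2}$ with respect to an action $\rho \colon \mbbZ_2 \times \mbbZ_2 \to \Aut_{\otimes}^{br}(\mathcal{E})$. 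By the earlier proposition, $\Aut_{\otimes}^{br}(\mcC(\mbbZ_m, q')) \cong \mbbZ_2$, so $\rho$ has a non-trivial kernel $K \cong \mbbZ_2$ acting trivially. The trivial $K$-action yields a Deligne decomposition $\mcC_{ad} \cong (\mathcal{E}^{\mbbZ_2}) \boxtimes \Rep(K)$, and a quick simple-object count (two invertibles and $(m-1)/2$ two-dimensional simples in $\mathcal{E}^{\mbbZ_2}$, yielding $4$ invertibles and $m-1$ two-dimensional simples after tensoring with $\Rep(K)$) matches the known structure of $\mcC_{ad}$. The $\Rep(K)$ factor singles out a boson $b \in \mcC_{pt}$ with $\langle b \rangle \cong \Rep(K)$.

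The remaining step---and the main obstacle---is to lift this Deligne decomposition of $\mcC_{ad}$ to one of $\mcC$ itself, contradicting primality. Since $\langle b\rangle = \Rep(K)$ is Tannakian rather than modular in $\mcC$, Müger's factorization theorem does not immediately split it off as a tensor factor. The approach is to analyze the centralizer $\langle b \rangle'$ in $\mcC$ (of dimension $8m$), which must equal $\mcC_{ad}$ together with exactly one of the three non-adjoint components $\mcC_h, \mcC_{g_1}, \mcC_{g_2}$ of the universal grading, the other two having non-trivial monodromy with $b$. Combining this with the pointed modular structure of $(\mcC_{int})_{\langle g \rangle}$ of dimension $4m$ from Lemma~\ref{Lemma: All 2-dimensionals}, one should exhibit a modular subcategory of $\mcC$ extending $\mathcal{E}^{\mbbZ_2}$ together with a complementary modular subcategory extending $\langle b \rangle$ via its associated graded components, whose Deligne product recovers $\mcC$---the desired contradiction with primality.
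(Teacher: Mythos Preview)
Your proposal is explicitly incomplete: you yourself flag the final step---lifting the Deligne factorization $\mcC_{ad}\cong\mathcal{E}^{\mbbZ_2}\boxtimes\Rep(K)$ to a factorization of $\mcC$---as ``the main obstacle'' and offer only a sketch. That obstacle is genuine. A Deligne factor of $\mcC_{ad}$ need not extend to a modular subcategory of $\mcC$; the Tannakian $\langle b\rangle$ you isolate is not modular in $\mcC$, so M\"uger factorization does not apply, and the centralizer $\langle b\rangle'$ of dimension $8m$ is only premodular. Nothing in your outline explains why the non-adjoint components of $\mcC$ respect the splitting you found at the adjoint level, and in fact categories of metaplectic type (Section~4.2) show that $\mcC_{ad}$ can admit such a boson without $\mcC$ decomposing. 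So the proposal, as written, does not close.

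The paper avoids this obstruction by working entirely inside $\mcC$ with centralizers and the balancing equation, never attempting to factor. Assuming all three non-trivial invertibles are bosons, one identifies $\mcZ_{\mcC}(\mcC_{int})=\langle g\rangle$ and shows that for each non-integral graded component $\mcC_a$ the centralizer $\mcZ_{\mcC}(\mcC_{ad}\oplus\mcC_a)$ is generated by a boson different from $g$ (using primality via double centralizers). Balancing then forces $g$ to permute the non-integral simples without fixed points, while $g$ fixes all $2$-dimensional simples in $\mcC_{int}$ (this follows from Lemma~\ref{Lemma: All 2-dimensionals}, since $(\mcC_{int})_{\langle g\rangle}$ is pointed). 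These two facts are played against each other via the fusion of a $\sqrt{t}$-dimensional simple in one non-integral component with a $2\sqrt{t}$-dimensional simple in the other to produce a contradiction. This is a more hands-on argument than your modularization-of-$\mcC_{ad}$ approach, but it sidesteps precisely the lifting problem you were unable to resolve.
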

\begin{proof}
  Suppose that the non-trivial invertibles, $g$, $h$, and $gh$, are all bosons.
  
  In the same way as in the proof of Lemma \ref{Lemma: All 2-dimensionals} it can be shown that $\mcZ_{\mcC}\paren{\mcC_{int}}=\langle g\rangle$, for some invertible $g$.
 Since the GN-grading group is $\mathbb Z_2$, the possible non-integral dimensions of simple objects are $\sqrt{t}$ and $2\sqrt{t}$, for some $t\in \mathbb N$. 
 
  
  Denote by $\mcD_{a}$, the
  subcategory $\mcC_{ad}\oplus\mcC_{a}$. Then
  $\mcZ_{\mcC}\paren{\mcD}\subset\mcZ_{\mcC}\paren{\mcC_{ad}}=\mcC_{pt}$. By
  primality and taking double centralizers we can conclude that
  $\mcZ_{\mcC}\paren{\mcD}\neq \Vec$, $\mcC_{pt}$. In particular, there is a
  boson $b$ such that $\mcZ_{\mcC}\paren{\mcD}=\langle b\rangle$. Once again, by
  double centralizing, $b\neq g$. 
  Similarly, denoting $\mcD_{c}$, the
  subcategory $\mcC_{ad}\oplus\mcC_{c}$, where $\mcC_{c}$ is the other non-integral component, we get $\mcZ_{\mcC}\paren{\mcD}=\langle bg \rangle$. With out lost of generality, we can assume that $b = h$.
  
  Since $g$ does not centralize $X$ we have
  $S_{g,X}\neq d_{X}$ and so by balancing and orthogonality of the S-matrix, $g$ must move $X$. Similarly, $g$ must
  move $Y$. On the other hand, $g$ fixes the 2-dimensional simple objects in $\mcC_{ad}$. 

  Now, assume there exist a simple $X\in \mcD_a$ of dimension $\sqrt{t}$ and a simple $Y\in \mcD_c$ of dimension $2\sqrt{t}$.
  Then $X\otimes Y$ is in the non-adjoint integral component of $\mcC$. So $X\otimes Y$ is a sum of $2$-dimensional simple objects. But if $Z$ is a $2$-dimensional simple object such that $Z\otimes X \cong Y$ then $Y \cong Z\otimes Y\cong g\otimes Z\otimes X \cong g\otimes Y$. This is a contradiction since $g$ does not fix $Y$.
  
Then, there is a fermion on $\mcC$. Moreover, both $h$ and $gh$ are fermions in $\mcC$.
\end{proof}

\begin{cor}
  \label{Corollary: subcat centralizers}
If $\mcC$ is a self-dual prime modular category of dimension $16m$ with GN-grading $\mbbZ_{2}$. Let $h$ and $gh$ be the fermions in $\mcC$. Denote by $\mcC_{a}$ and $\mcC_{b}$ the
  non-integral components, then
  $\mcZ_{\mcC}\paren{\mcC_{ad}\oplus\mcC_{a}}=\langle h\rangle$ and
  $\mcZ_{\mcC}\paren{\mcC_{ad}\oplus\mcC_{b}}=\langle gh\rangle$. 
  Moreover, there is no component containing only objects of dimension $2\sqrt{t}$ for some square-free integer $t$.
\end{cor}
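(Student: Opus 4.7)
The first statement is essentially implicit in the proof of Proposition \ref{Prop: Must be Spin}. My plan is to recall that for either non-integral component, a double-centralizer argument combined with primality (applied to the subcategory $\mcC_{ad}\oplus\mcC_a$) forces $\mcZ_\mcC(\mcC_{ad}\oplus\mcC_a)$ to be a rank-$2$ fusion subcategory sitting strictly between $\Vec$ and $\mcC_{pt}$, and hence generated by a single non-trivial invertible object $b$. The analogous argument applied to $\mcC_{ad}\oplus\mcC_b$ gives $\mcZ_\mcC(\mcC_{ad}\oplus\mcC_b)=\langle bg\rangle$. Since Proposition \ref{Prop: Must be Spin} identifies $h$ and $gh$ as precisely the two fermions of $\mcC$, after absorbing the labeling ambiguity of $\mcC_a$ versus $\mcC_b$ one may take $b=h$, yielding both displayed centralizer identities.

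For the ``moreover'' part, I would argue by contradiction. Suppose a non-integral component, say $\mcC_a$, contains \emph{only} simple objects of dimension $2\sqrt{t}$ for some square-free integer $t$. The key step is to show that tensoring by $h$ acts without fixed points on the isomorphism classes of simples in $\mcC_a$. By the first part, $h$ centralizes every simple $X\in\mcC_a$, so $S_{h,X}=d_h d_X$; applying the balancing identity \eqref{sformula} to the simple object $h\otimes X$ yields
\begin{equation*}
\theta_{h\otimes X} \;=\; \theta_h\,\theta_X \;=\; -\theta_X,
\end{equation*}
since $\theta_h=-1$. Because twists are nonzero roots of unity, this rules out $h\otimes X\cong X$ for every simple $X\in\mcC_a$.

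Consequently the simples of $\mcC_a$ split into $h$-orbits of size exactly $2$, so the total number of isomorphism classes of simples in $\mcC_a$ is even. On the other hand, under the hypothesis that every simple there has dimension $2\sqrt{t}$, this number equals $\dim(\mcC_a)/(2\sqrt{t})^2 = 4m/(4t) = m/t$. Since $m$ is odd and square-free, $m/t$ is odd, giving the desired contradiction. The main obstacle to pin down is justifying the fixed-point-free action of $h$, which hinges on combining the first-part centralizer identity with $\theta_h=-1$ via balancing; once that is in hand, the odd-square-free hypothesis on $m$ closes the argument without further computation.
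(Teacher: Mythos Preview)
Your proposal is correct and follows essentially the same approach as the paper. For the first statement both you and the paper defer to the double-centralizer/primality argument embedded in the proof of Proposition~\ref{Prop: Must be Spin}; for the ``moreover'' clause both argue by contradiction that the fermion $h$ (which centralizes $\mcC_a$) acts fixed-point-freely on the simples there, forcing their number to be even while the dimension count $4m=4t\cdot(\text{\#simples})$ makes it odd. Your explicit use of balancing to justify $\theta_{h\otimes X}=-\theta_X$ (hence $h\otimes X\not\cong X$) spells out what the paper asserts without proof, but the route is the same.
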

\begin{proof}

  The proof of the first statement is contained in the proof of Proposition \ref{Prop: Must be Spin}.
  
  By \cite[Theorem 3.10]{GN2}, the only possible dimensions of non-integral
  objects are $\sqrt{t}$ and $2\sqrt{t}$ for a square-free integer $t$. 
  
  Suppose to the contrary that there is some component containing only simples of
  dimension $2\sqrt{t}$. Without loss of generality we take this to be the $\mcC_{a}$ component. Then tensoring with $h$ must permute these simples in a
  fixed point free manner. Then $4m = \FPdim \mcC_a = 4tk$, where $k$ is the number of simples in $\mcC$. This is a contradiction since $k$ is even and $m$ odd.
\end{proof}
\subsection{Classification for dimension $16m$}
\begin{lemma}
Let $\mcC$ be a self-dual prime modular category of dimension $16m$ with GN-grading $\mbbZ_{2}$.
Then the non-integral objects have dimension $\sqrt{t}$
and $2\sqrt{t}$ for some \emph{square-free even} integer $t$. Moreover each non-integral component contains an even number of objects of dimension $\sqrt{t}$ and an even number of objects of dimension $2\sqrt{t}$.

\end{lemma}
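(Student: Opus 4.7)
\bigskip

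\noindent\textbf{Proof plan.} The proof naturally splits into verifying (i) the evenness of the counts of simples of each non-integral dimension in the components $\mcC_a$ and $\mcC_b$, and (ii) the evenness of $t$. Throughout I would retain the labelling set up earlier: the universal grading group is $\mbbZ_2\times\mbbZ_2=\{e,a,b,c\}$ with $\mcC_e=\mcC_{ad}$, $\mcC_c$ the integral non-adjoint component (so $\mcC_{\intcat}=\mcC_e\oplus\mcC_c$), and $\mcC_a,\mcC_b$ the two non-integral components, so that $a+b=c$ in the grading group.

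For (i), Corollary \ref{Corollary: subcat centralizers} tells us that the fermion $h$ centralizes every simple in $\mcC_a$, i.e.\ $S_{h,X}=d_hd_X=d_X$ for all simple $X\in\mcC_a$. If tensoring by $h$ fixed such an $X$, i.e.\ $h\otimes X\cong X$, the balancing relation \eqref{sformula} would give $S_{h,X}=d_X\theta_X/(\theta_h\theta_X)=-d_X$, contradicting centralization. Hence $h$ acts without fixed points on the simples of $\mcC_a$, and since $h$ has dimension $1$ its orbits consist of simples of a common dimension. Thus the $\sqrt{t}$-dimensional simples of $\mcC_a$ pair up, as do the $2\sqrt{t}$-dimensional ones; that is, both counts are even. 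The identical argument, with $gh$ replacing $h$, handles $\mcC_b$.

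For (ii), Corollary \ref{Corollary: subcat centralizers} also guarantees that neither $\mcC_a$ nor $\mcC_b$ consists solely of $2\sqrt{t}$-dimensional objects, so I can select simple objects $X\in\mcC_a$ and $Y\in\mcC_b$ with $\dim X=\dim Y=\sqrt{t}$. The tensor product $X\otimes Y$ lies in $\mcC_{a+b}=\mcC_c$, which by Lemma \ref{Lemma: All 2-dimensionals} is built entirely from $2$-dimensional simples. Therefore $\dim(X\otimes Y)=d_Xd_Y=t$ is a non-negative integer multiple of $2$, so $t$ is even. Combined with square-freeness of $t$ we obtain $t=2\ell$ with $\ell$ odd and square-free, completing the proof.

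The only subtlety is recognizing that ``a fermion centralizes a component'' is the same as ``the fermion acts freely on that component'': once this is extracted from the balancing relation, part (i) is immediate, and part (ii) reduces to a one-line dimension count using the preceding lemma. No deeper obstruction appears; the proof is essentially an unpacking of the structural information already gathered in Proposition \ref{Prop: Must be Spin} and Corollary \ref{Corollary: subcat centralizers}.
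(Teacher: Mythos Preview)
Your proof is correct and follows essentially the same approach as the paper's. For the evenness of $t$ you both pick $\sqrt{t}$-dimensional simples $X\in\mcC_a$, $Y\in\mcC_b$ (whose existence is guaranteed by Corollary \ref{Corollary: subcat centralizers}) and observe that $X\otimes Y$ lands in the integral non-adjoint component, which is built from $2$-dimensionals by Lemma \ref{Lemma: All 2-dimensionals}; for the even multiplicities you both use that the relevant fermion centralizes the component and then invoke balancing to conclude the tensor action is fixed-point-free. The paper packages the latter as ``$\mcZ_\mcD(\mcD)\cong\sVec$ so the fermion acts freely,'' while you spell out the balancing computation explicitly, but the content is identical.
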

\begin{proof}
By \corref{Corollary: subcat centralizers} we know that there is at least one
  object of dimension $\sqrt{t}$ in each non-integral component, let $X$ and $Y$
  be such objects in $\mcC_{a}$ and $\mcC_{b}$ (under the notation of
  \corref{Corollary: subcat centralizers}), respectively. Then $X\otimes Y$ must be in the
  integral non-adjoint component under the universal grading. In particular, by Lemma \ref{Lemma: All 2-dimensionals}, $t=\dim X\otimes Y=2s$ for some $s\in\mbbZ$. The $t$ is even. Furthermore, $t$ is square-free since $t$ divides $m$.

 By \propref{Prop: Must be Spin}, $\mcC$ is a spin modular category. Denote $\mcD=\mcC_{ad}\oplus\mcC_{a}$, then
  $\mcZ_{\mcD}\paren{\mcD}\cong\sVec$ and even multiplicity follows from
  the fact that the action of the fermion is fixed-point-free. The same holds for $\mcC_{ad}\oplus\mcC_{b}$.
%
%
%
\end{proof}

\begin{cor}
Let $\mcC$ be a self-dual prime modular category of dimension $16m$ with GN-grading $\mbbZ_{2}$.
Then all of the non-integral simple objects have dimension $\sqrt{2m}$.
\end{cor}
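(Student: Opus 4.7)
From the preceding lemma, write $t = 2s$ with $s$ odd square-free, and let $a, b$ count the simples of dimensions $\sqrt{t}$ and $2\sqrt{t}$ in $\mcC_a$; the lemma gives both $a$ and $b$ even, with $a \geq 2$, and $\dim \mcC_a = 4m$ yields $s(a + 4b) = 2m$ so that $s \mid m$. The target is $a = 2$ and $b = 0$, forcing $s = m$ and $t = 2m$; the analogous statement for $\mcC_b$ will then complete the corollary by symmetry (swapping the roles of $h$ and $gh$).

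The first step is to pin down the stabilizer $G[X] = \{Y \in \mcC_{pt} : Y \otimes X \cong X\}$ for every self-dual simple $X \in \mcC_a$. By Remark \ref{formula Xotimesdual} and Lemma \ref{Lemma: All 2-dimensionals}, $X \otimes X = \bigoplus_{Y \in G[X]} Y \oplus \bigoplus_Z n_Z Z$ with each $Z$ two-dimensional in $\mcC_{ad}$. The fermion $h$ cannot lie in $G[X]$: if $hX \cong X$, the balancing equation gives $S_{h,X} = -d_X$, contradicting the centralizing value $d_X$ from Corollary \ref{Corollary: subcat centralizers}. The boson $g$ cannot either: $gX \cong X$ would force $S_{g,X} = d_X$, placing $X$ in $\mcZ_{\mcC}(\langle g \rangle) = \mcC_{int}$. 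Parity of $\dim(X \otimes X)$ forces $|G[X]|$ even, so $G[X] = \{\one, gh\}$. The same reasoning shows every two-dimensional $Z \in \mcC_{ad}$ has $G[Z] = \{\one, g\}$: the fermions $h, gh$ centralize $\mcC_{ad}$ and cannot fix $Z$ by the same balancing argument, leaving only $g$.

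The heart of the proof is the identity $(1 - gh) \cdot (X \otimes W) = 0$ in $K_0(\mcC)$, which holds for every $X \in \mcC_a$ and every $W \in \mcC$ since $ghX \cong X$. Because $g$ fixes every two-dimensional $Z \in \mcC_{ad}$ while $h$ acts fixed-point-freely on them, $gh$ acts on the two-dimensional simples of $\mcC_{ad}$ as $h$; hence the two-dim multiplicities $n_Z$ of $X \otimes W$ satisfy $n_Z = n_{hZ}$, so $\sum n_Z$ is even. Applying this with $W = X' \in \mcC_a$ of dimension $2\sqrt{t}$: the decomposition $X' \otimes X' = \one \oplus gh \oplus \bigoplus n_Z Z$ has two-dim multiplicity $(4t - 2)/2 = 2t - 1$, which is odd, a contradiction; so $b = 0$. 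Applying it instead with $W = X' \in \mcC_a$ of dimension $\sqrt{t}$ and $X' \notin \{X, hX\}$: Frobenius reciprocity combined with $gX' = hX'$ and $ghX' = X'$ kills all four invertible summands, leaving $X \otimes X' = \bigoplus n_Z Z$ with multiplicity sum $s$, which is odd, a contradiction. Hence $\mcC_a = \{X, hX\}$ and $a = 2$, giving $t = 2m$. The main obstacle is pinning down $G[X]$ and $G[Z]$ in the second paragraph; once the asymmetry between $g$ and $h$ on two-dimensional simples of $\mcC_{ad}$ is in place, the parity contradictions are immediate.
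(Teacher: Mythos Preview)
Your proof is correct and follows essentially the same strategy as the paper: compute the stabilizers $G[X]$ for simples in $\mcC_a$ and for the $2$-dimensional simples in $\mcC_{ad}$, then exploit the fact that a suitable invertible fixes the non-integral objects but acts fixed-point-freely on the $2$-dimensional simples of $\mcC_{ad}$ to force parity contradictions.

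The only notable difference is in how the $2\sqrt{t}$-dimensional objects are eliminated. The paper tensors a $\sqrt{t}$-dimensional object with a $2\sqrt{t}$-dimensional one and, using $g$-invariance of $Z\otimes(\sqrt{t}\text{-dim})$, shows that no $2\sqrt{t}$-dimensional summand can appear in $Z\otimes X$, forcing $N_{X,Y}^{Z}=0$ for all $Z$. You instead go straight to $X'\otimes X'$ for $X'$ of dimension $2\sqrt{t}$ and use $gh$-invariance to obtain the odd multiplicity $2t-1$. Your route is a bit more uniform (the single identity $(1-gh)\cdot(X\otimes W)=0$ handles both eliminations) and avoids the separate $g$-based argument; the paper's route has the minor advantage of not needing to first pin down $G[X']=\{\one,gh\}$ for the $2\sqrt{t}$-dimensional $X'$. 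Either way, the second step (ruling out a third $\sqrt{t}$-dimensional object) is the same parity argument in both proofs.
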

\begin{proof}
By \propref{Prop: Must be Spin}, $\mcC$ must be spin modular.

Suppose there are objects $X$ and $Y$ in the same component of dimension $2\sqrt{t}$ and $\sqrt{t}$ respectively. Then $X\otimes Y$ is in
  $\mcC_{ad}$. Since $X$ and $Y$ have different dimensions, no invertibles can
  appear as subobjects of $X\otimes Y$. Thus $X\otimes Y=\bigoplus
  N_{X,Y}^{Z_{i}}Z_{i}$ where $Z_{i}$ are the 2-dimensional simple objects in
  $\mcC_{ad}$. But $N_{X,Y}^{Z_{i}} = N_{X,Z_{i}}^{Y}$.
  If we consider $Z_i\otimes X$ there are $3$ possibilities: it is the direct sum of $2$ simple objects of dimension $\sqrt{t}$, it is equal to a simple object of dimension $2\sqrt{t}$, or it equal to $2$ copies of a simple object of dimension $\sqrt{t}$.   If we tensor $Z\otimes X$ by the boson $g$, since $Z$ is fixed by $g$, we get that $Z\otimes X = X_1\oplus gX_1$, where $X_1$ is a $\sqrt{t}$-dimensional simple object in $\mcC_a$. This implies that $N_{X,Y}^{Z_{i}} = N_{X,Z_{i}}^{Y}= 0$ for all $X$, $Y$, and $Z$ as above. But this is a contradiction because $X\otimes Y$ has dimension $2t$.
  It follows that the non-integral objects all have
  dimension $\sqrt{t}$ for some even square-free integer $t$, see \corref{Corollary: subcat centralizers}.

  Now suppose there exist non-integral simple objects $W\neq V$ from the same
  component such that $W\otimes V$ does not contain any invertible objects. Then
  the fermion that doesn't centralize $W$ and $V$ must fix them. On the other
  hand, this fermion must permute all elements of the adjoint subcategory since it is transparent in $\mcC_{ad}$. Hence $W\otimes V$ must decompose into an even number of 2-dimensional simples
  in the adjoint. Computing dimensions we see $4\mid \dim W\otimes V=t$, which is impossible since $t$ is square-free. Thus for any non-integral $W,V$ in the same component,
  $W\otimes V$ contains an invertible, say $a$. Thus $a\otimes W=V$. In
  particular, this component can only contain two non-integral objects. The
  result now follows by equidimensionality of the universal grading.
\end{proof}

We see that under the assumptions above, the category $\mcC$ has the following structure, where we take $\mbbZ_2\times\mbbZ_2$ grading: 
\begin{itemize}
\item $\mcC_0$ has 1 boson, 2 fermions and $m-1$ objects of dimension $2$.
\item $\mcC_{(1,1)}$ has $m$ objects of dimension $2$
\item $\mcC_{(0,1)}$ and $\mcC_{(1,0)}$ each have 2 objects of dimension $\sqrt{2m}$.  
\end{itemize}
The 
equation together with Corollary \ref{Corollary: subcat centralizers} shows that the twists of the simple objects in $\mcC_{(1,0)}$ and $\mcC_{(0,1)}$ come in pairs of the form $(\theta,-\theta)$.  Similarly, the $m-1$ simple objects of dimension $2$ in $\mcC_{0}$ come in pairs with twists of the form $(\theta,-\theta)$.  In particular the central charge of $\mcC$ only depends on the twists in $\mcC_{(1,1)}$.  This is not quite sufficient to conclude that $\mcC$ is metaplectic, but for certain the trivial component of the $\mbbZ_2$-de-equivariantization has $[\mcC_{\mbbZ_2}]_0\cong \mcC(A,q)$ where $A$ is an abelian group of order $4m$, and the corresponding action of $\mbbZ_2$ by braided tensor autoequivalences on $\mcC(A,q)$ has exactly 2 fixed points.



Although we do not get as sharp a result in the case of $16m$ as we do in the cases $4m$ and $8m$, we can still give a significant amount of the structure under some assumptions:
\begin{theorem} Let $\mcC$ be a prime modular category of dimension $2^nm$ with $m$ odd and square free. \begin{itemize}
    \item If $n=1$, then $\mcC$ is pointed.
    \item \cite{BGNPRW1,BPR} If $n=2$ or $3$, then $\mcC$ is a metaplectic modular category.
    \item If $n=4$ and $\mcC$ is self-dual and its $GN$ grading is $\mbbZ_2$, then 
    \begin{itemize} \item $\mcC$ is  metaplectic, or 
    \item $\mcC$ is obtained as a $\mbbZ_2$ gauging of $\mcC(\mbbZ_2\times\mbbZ_{2m},q)$ via an action of $\mbbZ_2$ with exactly $2$ fixed points. 
    \end{itemize}
    \end{itemize}
\end{theorem}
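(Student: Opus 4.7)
The plan is to handle the three cases in turn. The cases $n=1,2,3$ are essentially appeals to earlier classifications: for $n=2,3$ the result is precisely \cite{BGNPRW1,BPR}, while for $n=1$ one observes that $\dim\mcC=2m$ is square-free, so by the classification of modular categories of square-free dimension (cf.~\cite{ENO2}) $\mcC$ must be pointed. The substantive new content is the $n=4$ case.

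For $n=4$, essentially all of the work has been carried out in the lemmas of this section. Proposition~\ref{universal grading group order 4} gives $\mcU(\mcC)\cong\mbbZ_2\times\mbbZ_2$; Proposition~\ref{Prop: Must be Spin} shows $\mcC$ is spin modular, with a distinguished boson $fg\in\mcC_{pt}$ and fermions $f,g$; and the subsequent corollaries pin down the four graded components as displayed in the paragraph immediately preceding the theorem. That paragraph already establishes that the $\mbbZ_2$-de-equivariantization $\mcD:=\mcC_{\mbbZ_2}$ by $\langle fg\rangle\cong\Rep(\mbbZ_2)$ has pointed modular trivial component $\mcD_0\cong\mcC(A,q)$ with $|A|=4m$, and that the induced $\mbbZ_2$-action on $\mcD_0$ has exactly two fixed points. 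Only the isomorphism type of $A$ and the identification of the resulting gauging remain to be determined.

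Since $m$ is odd and square-free, $A\cong A_2\times\mbbZ_m$, where the $2$-Sylow subgroup $A_2$ has order $4$, hence is either $\mbbZ_4$ or $\mbbZ_2\times\mbbZ_2$. In the cyclic case $A\cong\mbbZ_{4m}$, the proposition on particle-hole symmetry from the Gauging subsection of Section~\ref{Section: Preliminaries} shows that the only nontrivial braided tensor autoequivalence of $\mcC(\mbbZ_{4m},q)$ is $a\mapsto -a$, whose fixed-point set $\{0,2m\}$ has cardinality $2$; $\mcC$ is therefore the $\mbbZ_2$-gauging of this particle-hole symmetry, which by definition is a metaplectic modular category of dimension $16m$, giving the first alternative. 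In the remaining case $A\cong\mbbZ_2\times\mbbZ_{2m}$, $\mcC$ is a $\mbbZ_2$-gauging of $\mcC(\mbbZ_2\times\mbbZ_{2m},q)$ via an involutive braided autoequivalence with exactly two fixed points, yielding the second alternative.

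The main obstacle is not in this final identification but in the preceding structural analysis already performed in this section: establishing spin modularity, ruling out coexistence of $\sqrt{t}$- and $2\sqrt{t}$-dimensional simples in the non-integral components, and producing the count of exactly $4m$ invertibles in $\mcD_0$. Once those tools are in place, separating the two possibilities for $A$ by its Sylow-$2$ subgroup is elementary. A subtler point worth flagging is that, in contrast to the $4\nmid N$ metaplectic case of Theorem~\ref{Theorem: Metaplectic main result}, the non-cyclic alternative cannot be ruled out from internal fusion-theoretic data alone; distinguishing the two cases would require finer invariants such as the modular data, which is why this is recorded as Question~\ref{Question: DefectTwists}.
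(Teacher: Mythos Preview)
Your proposal is correct and follows the same approach as the paper: the theorem is presented there as a summary of the preceding lemmas and the discussion immediately before it, with the dichotomy on the abelian group $A$ of order $4m$ left implicit in the theorem statement. You make that dichotomy explicit by observing that, since $m$ is odd and square-free, the $2$-Sylow subgroup of $A$ is either $\mbbZ_4$ or $\mbbZ_2\times\mbbZ_2$, which is exactly what the two bullets of the $n=4$ case encode.

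Two minor remarks. First, the phrase ``which by definition is a metaplectic modular category'' is imprecise: metaplectic means Grothendieck equivalent to $SO(N)_2$, so one needs the (easy) converse of Theorem~\ref{thm:particleHoleAnalog}, namely that every $\mbbZ_2$-gauging of the particle-hole symmetry on $\mcC(\mbbZ_{4m},q)$ has $SO(4m)_2$ fusion rules. This follows from the explicit defect fusion rules recalled in the proof of Proposition~\ref{prop:count} (taken from \cite{BBCW1}). Second, your closing reference to Question~\ref{Question: DefectTwists} is slightly off: that question concerns computing twists of metaplectic categories, not distinguishing the cyclic from the non-cyclic $A$. The paper's own caveat after the theorem---that it is not known whether the $\mcC(\mbbZ_2\times\mbbZ_{2m},q)$ gauging is self-dual or prime---is the more apposite remark.
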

We do not know if the $\mbbZ_2$ gauging of $\mcC(\mbbZ_2\times\mbbZ_{2m},q)$ is self-dual or prime, so it is possible that a sharper result can be obtained.  Moreover, we do not know what happens if the GN-grading is $\mbbZ_2\times\mbbZ_2$.

\subsection{Related questions}
\begin{question}
  \label{Question: Big GN}
  How large can the GN-grading group be for a prime strictly weakly integral
  modular category?
\end{question}

There is an upper bound, given in \cite[Theorem 3.1]{DN}, for weakly-integral modular categories of dimension $2^nm$, where $n \geq 0$ and $m$ odd. In this case, the order of the GN-grading group is at most $2^{\frac{n}{2}}$.

It is of course straight forward to develop non-prime categories with large
GN-grading via Deligne product of $\mbbZ_{2}$-graded categories. A natural place
to look for large GN-grading with fewer prime factors is through
equivariantization. That is, perhaps one can find an equivariantization of one
of these Deligne products that has a diverse prime factor. This of course leads
to the following related question:
\begin{question}
  \label{Question: Factorizations under (de-)equivariantization}
  How do Deligne products behave under (de-)equivariantization?
\end{question}

\bibliographystyle{abbrv}

\end{document}